\documentclass{amsart}

\usepackage{amssymb}
\usepackage{amsthm}
\usepackage{amsmath}
\usepackage{graphicx}
\usepackage{txfonts}
\usepackage[all]{xy}
\usepackage{color}

\theoremstyle{plain}
\newtheorem{theorem}{Theorem}[section]
\newtheorem{proposition}[theorem]{Proposition}
\newtheorem{corollary}[theorem]{Corollary}
\newtheorem{lemma}[theorem]{Lemma}

\newtheorem*{conjecture*}{Conjecture}
\newtheorem*{thm:RuelleRtorsion}{Theorem~\ref{thm:Ruelle_Rtorsion_eqn}}
\theoremstyle{definition}
\newtheorem{definition}[theorem]{Definition}
\newtheorem*{definition*}{Definition}
\newtheorem*{example*}{Example}
\newtheorem*{notation*}{Notation}
\newtheorem*{notation-conv*}{Notation and convention}
\newtheorem*{convention*}{Convention}
\theoremstyle{remark}
\newtheorem{remark}[theorem]{Remark}
\newtheorem*{remark*}{Remark}



\newcommand{\N}{\mathbb{N}}
\newcommand{\Z}{\mathbb{Z}}

\newcommand{\R}{\mathbb{R}}
\newcommand{\C}{\mathbb{C}}

\newcommand{\upperH}{\mathbb{H}^2}
\newcommand{\re}{\mathop{\mathrm{Re}}\nolimits}
\newcommand{\img}{\mathop{\mathrm{Im}}\nolimits}
\newcommand{\iu}{{\scriptstyle \sqrt{-1}}}
\newcommand{\SL}[1][2]{\mathrm{SL}_{#1}(\C)}
\newcommand{\GL}{\mathrm{GL}_n(\C)}

\newcommand{\SO}{\mathrm{SO}(2)}

\newcommand{\sllR}{\mathfrak{sl}_2(\R)}

\newcommand{\SLR}[1][2]{{\mathrm{SL}_{#1}(\R)}}
\newcommand{\PSL}{{\mathrm{PSL}_2(\C)}}
\newcommand{\PSLR}{{\mathrm{PSL}_2(\R)}}
\newcommand{\TPSLR}{\widetilde{\mathrm{PSL}}_{2}(\R)}
\newcommand{\TSLR}{\widetilde{\mathrm{SL}}_{2}(\R)}

\newcommand{\trace}{\mathop{\mathrm{tr}}\nolimits}
\newcommand{\I}{\mathbf{1}}






\newcommand{\Isom}{\mathop{\mathrm{Isom}}}

\newcommand{\Tor}[2]{\mathop{\mathrm{Tor}}\nolimits (#1;#2)}

\newcommand{\ie}{i.e.,\,}
\begin{document}


\title[]{
  Dynamical zeta functions for geodesic flows
  and the higher-dimensional Reidemeister torsion
  for Fuchsian groups
}

\author{Yoshikazu Yamaguchi}

\address{Department of Mathematics,
  Akita University,
  1-1 Tegata-Gakuenmachi, Akita, 010-8502, Japan}
\email{shouji@math.akita-u.ac.jp}


\keywords{
  Ruelle zeta function, geodesic flow,
  Selberg zeta function, Selberg trace formula
  asymptotic behavior, Reidemeister torsion,
  Fuchsian group}
\subjclass[2010]{57M27, 57M50, 11F72, 11M36, 22E40}

\begin{abstract}
  We show that the absolute value at zero of
  the Ruelle zeta function defined by the geodesic flow 
  coincides with the higher-dimensional Reidemeister torsion
  for the unit tangent bundle over a $2$-dimensional hyperbolic orbifold
  and a non-unitary representation of the fundamental group.
  Our proof is based on the integral expression of the Ruelle zeta function.
  This integral expression is derived from the functional equation of the Selberg zeta function for
  a discrete subgroup with elliptic elements in $\PSLR$.
  We also show that the asymptotic behavior of the higher-dimensional Reidemeister torsion
  is determined by the contribution of the identity element
  to the integral expression of the Ruelle zeta function.
\end{abstract}


\maketitle

\section{Introduction}
We study the Ruelle zeta function defined for a dynamical system
called the geodesic flow on the unit tangent bundle over a $2$-dimensional hyperbolic orbifold
and the relation to the Reidemeister torsion for
the unit tangent bundle with non-unitary representations of the fundamental group.
Here a $2$-dimensional hyperbolic orbifold means the quotient of the hyperbolic plane
by a discrete subgroup of the orientation preserving isometry group, which is called a Fuchsian group. 
The Ruelle zeta function is closely related to
another zeta function called
the Selberg zeta function defined for a discrete subgroup of $\PSLR = \SLR / \{\pm \I\}$.
The values of the Ruelle zeta function have been studied 
by using the functional equation of the Selberg zeta function.
In particular, it has been revealed that
the (absolute) value at zero of the Ruelle zeta function 
determines the analytic torsion or the Reidemeister torsion of a manifold with an acyclic local system
given by a unitary representation of the fundamental group.
We refer the readers to the article~\cite{Shen18:FriedConj} by S.~Shen and
the references given there on those developments.

In this paper, we consider
the Ruelle zeta functions and the Reidemeister torsions defined for the unit tangent bundle $S(T\Sigma)$
over a $2$-dimensional hyperbolic orbifold $\Sigma$ 
and a sequence of $\SL[n]$-representations of the fundamental group $\pi_1(S(T\Sigma))$.
This study is motivated by the work~\cite{Muller:AsymptoticsAnalyticTorsion} of W.~M\"uller,
which investigates the asymptotic behavior of the analytic torsion or the Reidemeister torsion
for a closed oriented hyperbolic $3$-manifold $X$ and
$\SL[n]$-representations $\rho_n$ of $\pi_1(X)$ induced from the hyperbolic structure of $X$.
It was shown in~\cite{Muller:AsymptoticsAnalyticTorsion} that
the hyperbolic volume is derived from the limit of 
the leading coefficient in the logarithm of the analytic torsion or the Reidemeister torsion $\Tor{X}{\rho_n}$ 
for a closed hyperbolic $3$-manifold $X$ with $\SL[n]$-representations $\rho_n$ of $\pi_1(X)$.
More preciously, the leading coefficient converges as 
\[
\lim_{n \to \infty} \frac{\log |\Tor{X}{\rho_n}|}{n^2} = \frac{\mathrm{Vol}(X)}{4\pi}.
\]
Note that the sign of the right hand side is different from that of~\cite{Muller:AsymptoticsAnalyticTorsion}
since $\Tor{X}{\rho_n}$ in our convention is the inverse of that in~\cite{Muller:AsymptoticsAnalyticTorsion}.
These $\SL[n]$-representations $\rho_n$ are given by the symmetric power of the standard representation of $\SL$
and its restriction to the subgroup $\pi_1(X)$
when $\pi_1(X)$ is regarded as a discrete subgroup in $\PSL \simeq \Isom^+\mathbb{H}^3$
such that $X = \pi_1(X) \backslash \mathbb{H}^3$.
Here $\mathbb{H}^3$ is the hyperbolic $3$-space and $\Isom^+\mathbb{H}^3$ denotes the orientation preserving isometry group of $\mathbb{H}^3$.
The key ingredients of M\"uller's observation are the following:
\begin{itemize}
\item the absolute value at zero of the Ruelle zeta function equals to
  the square of the analytic torsion;
\item the Ruelle zeta function has a factorization whose factors have useful functional equations and;
\item the asymptotic behavior of the analytic torsion is analyzed by using
  the functional equations which the factors in the factorization of the Ruelle zeta function satisfy.
\end{itemize}
The functional equation of each factor in the Ruelle zeta function is derived from
that of the Selberg zeta function.
The functional equation of the Selberg zeta function includes the volume of the hyperbolic manifold.
Thus we find the hyperbolic volume in the asymptotic behavior of the analytic torsion.

The author has studied the asymptotic behavior of the Reidemeister torsion
for a Seifert fibered space and $\SL[2N]$-representations of the fundamental group
in~\cite{Yamaguchi:asymptoticsRtorsion}. 
The observations in~\cite{Yamaguchi:asymptoticsRtorsion} are based on explicit computations of
the Reidemeister torsion in terms of the decomposition of a Seifert fibered space along tori.
The purpose of this paper is to study the relation between the Ruelle zeta function of a dynamical system and 
a sequence of the Reidemeister torsion for a Seifert fibered space $M$ with $\SL[2N]$-representations of $\pi_1(M)$.
Throughout this paper, we consider a Seifert fibered space $M$ given by the quotient $\Gamma \backslash \PSLR$
by a discrete subgroup $\Gamma$ in $\PSLR$.
It is known that the quotient
$\Gamma \backslash \PSLR$ is regarded as the unit tangent bundle over the hyperbolic orbifold
$\Gamma \backslash \upperH$.
Here $\Gamma$ acts on $\upperH$
as a subgroup of the isometry group $\Isom^+ \upperH \simeq \PSLR$ by linear factorial transformations.

In the case of a hyperbolic $3$-manifold $X$,
there exists an injective homomorphism $\rho:\pi_1(X) \to \PSL \simeq \Isom^{+} \mathbb{H}^3$ with discrete image
such that $M = \rho(\pi_1(X)) \backslash \mathbb{H}^3$, which is called the holonomy representation.
By taking a lift of $\rho$ to a homomorphism $\pi_1(X) \to \SL$ and
the composition with the symmetric powers of the standard representation of $\SL$,
we have a sequence of the $\SL[n]$-representations $\rho_n$
of $\pi_1(X)$. Then we can define the Ruelle zeta function
and the analytic torsion or the Reidemeister torsion for $X$ with $\rho_n$.

We will deal with the unit tangent bundle $\Gamma \backslash \PSLR$
over a $2$-dimensional hyperbolic orbifold $\Gamma \backslash \upperH$
as a Seifert fibered space $M$.
Such a Seifert fibered space $M$ is also regarded as the quotient of the universal cover $\TPSLR$ of $\PSLR$
by $\pi_1(M)$ embedded in $\Isom \TPSLR$.
Actually the image of $\pi_1(M)$ by the embedding is contained in the subgroup $\TPSLR$ in $\Isom \TPSLR$.
We can regard the Seifert fibered space as the quotient
$\pi_1(M) \backslash \TPSLR$ by the subgroup $\pi_1(M) \subset \TPSLR$ and obtain the $\SLR$-representation
of $\pi_1(M)$ by the restriction of the projection $\TPSLR \to \SLR$ to $\pi_1(M)$.
Similarly to the case of a hyperbolic $3$-manifold, we can have a sequence of
$\SL[n]$-representations $\rho_n$ of $\pi_1(M)$
and define the Ruelle zeta functions $R_{\rho_{2N}}(s)$ and the Reidemeister torsions $\Tor{M}{\rho_{2N}}$
for $M$ with even dimensional representation $\rho_{2N}$.
We will show that 
\begin{theorem}[Theorem~\ref{thm:Ruelle_Rtorsion_eqn}]
  $|R_{\rho_{2N}}(0)| = \Tor{M}{\rho_{2N}}$.
\end{theorem}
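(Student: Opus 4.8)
The plan is to follow M\"uller's strategy adapted to the Seifert-fibered setting: express $R_{\rho_{2N}}(s)$ as a product of Selberg-type zeta functions, use their functional equations to evaluate the product at $s=0$, and match the resulting quantity with the Reidemeister torsion computed via the fibration structure of $M$. First I would recall that the geodesic flow on $M=\Gamma\backslash\PSLR$ has closed orbits parametrized by conjugacy classes of hyperbolic elements of $\Gamma$, so that $R_{\rho_{2N}}(s)$ factors (via the symmetric-power decomposition of $\rho_{2N}$ restricted along the flow direction) into a finite alternating product of Selberg zeta functions $Z(s,\sigma_k)$ attached to the one-dimensional representations of the geodesic flow twisted by $\rho_{2N}$; concretely, $\Sym{2N-1}$ breaks into weight spaces under the $\SO$- (or $\widetilde{\SO}$-)action along the fibers, and each weight contributes one Selberg factor.

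Next I would invoke the functional equation of the Selberg zeta function for the Fuchsian group $\Gamma$ \emph{with elliptic elements} — this is the analytic input the excerpt advertises as the source of the ``integral expression'' of $R_{\rho_{2N}}(s)$. The functional equation relates $Z(s,\sigma)$ to $Z(-s,\sigma)$ (or $Z(1-s,\sigma)$) times an explicit factor built from $\Gamma$: an exponential of an integral over $s$ whose integrand packages the identity-element contribution (proportional to the orbifold Euler characteristic, i.e. the hyperbolic area) together with the elliptic contributions coming from the cone points and, in the twisted case, the characters of the finite isotropy groups evaluated on $\rho_{2N}$. Evaluating the alternating product of these functional equations at $s=0$, the ``dynamical'' halves $Z(0,\sigma_k)$ largely cancel in pairs because of the symmetry $k\leftrightarrow -k$ of the weights, and what survives is exactly the product of the explicit $\Gamma$-factors — this is the integral expression $|R_{\rho_{2N}}(0)|$ that the paper constructs.

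On the topology side, I would compute $\Tor{M}{\rho_{2N}}$ using that $M$ is a Seifert fibration over the orbifold $\Sigma=\Gamma\backslash\upperH$. Writing $M$ as the union of the pieces coming from the smooth part of $\Sigma$ and the fibered solid tori over the cone points, the multiplicativity of Reidemeister torsion (the Multiplicativity Lemma the paper has set up) reduces $\Tor{M}{\rho_{2N}}$ to: a base contribution governed by $\chi(\Sigma)$ — equivalently the area — together with local contributions at each cone point expressed through $\det(\rho_{2N}(\text{fiber class})-\mathrm{Id})$-type terms on the appropriate weight spaces, i.e. exactly the same characters of the isotropy groups that appeared in the elliptic terms of the Selberg functional equation. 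The remaining, and main, step is the term-by-term identification: showing the base (area) term from the identity-element integral equals the $\chi(\Sigma)$-term in the torsion, and that the elliptic integrals over the cone-point contributions reproduce the local torsion factors of the fibered solid tori, including getting all signs and normalization constants right so that the \emph{absolute value} of $R_{\rho_{2N}}(0)$ matches $\Tor{M}{\rho_{2N}}$ with the paper's sign convention (the inverse of M\"uller's).

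The hard part will be this last bookkeeping at the elliptic elements: the Selberg functional equation for a Fuchsian group with torsion has a somewhat delicate elliptic term (a finite sum over cone points of integrals involving $\cot$ or digamma-type functions), and one must show that after taking the alternating product over the weights $k$ of $\Sym{2N-1}$ it collapses to the clean finite product $\prod_{\text{cone pts}}\det(\mathrm{Id}-\rho_{2N}(g))$ (restricted to suitable eigenspaces) that the Reidemeister torsion of the fibered solid tori produces. Controlling the parity $2N$ is what makes the dynamical factors cancel cleanly and the elliptic contributions combine into honest determinants rather than transcendental residues; verifying that, together with the vanishing/nonvanishing of the relevant $Z(0,\sigma_k)$, is where the real work lies.
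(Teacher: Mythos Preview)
Your overall strategy---functional equation of the Selberg zeta function with elliptic contributions, matched against a fibrewise computation of the Reidemeister torsion---is the same as the paper's, but your execution diverges from the paper in a way that makes your plan considerably harder than necessary.

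The key simplification you miss is that no \emph{twisted} Selberg zeta functions $Z(s,\sigma_k)$ are needed at all, and there is no genuine ``alternating product'' to control. Because $\rho_{2N}(\gamma)$ for a closed geodesic has eigenvalues $e^{\pm(2k-1)\ell(\gamma)/2}$, the Ruelle zeta function factors as $\prod_{k=1}^N R(s-\tfrac{2k-1}{2})R(s+\tfrac{2k-1}{2})$ with the \emph{untwisted} classical $R(s)=Z(s)/Z(s+1)$, and this product \emph{telescopes} to the single ratio
\[
R_{\rho_{2N}}(s)=\frac{Z(s-N+1/2)}{Z(s+N+1/2)}.
\]
Thus $R_{\rho_{2N}}(0)^{-1}=Z(N+1/2)/Z(1/2-N)=\eta(N+1/2)$ with $\eta(s)=Z(s)/Z(1-s)$, and \emph{one} application of the functional equation (for the untwisted $Z$) yields a closed integral expression. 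The paper then evaluates this integral directly: the identity term gives $\exp(-\mu(\mathcal{D})\tfrac{N}{\pi}\log 2)$, and a residue computation over each interval $[k-1,k]$ collapses the elliptic integrals to $\prod_{k=1}^N(1-e^{\pm(2k-1)\pi\iu/\alpha_j})$ plus a correction $-\tfrac{2N}{\alpha_j}\log 2$. Comparison with the explicit torsion formula (quoted from prior work, not rederived via a Multiplicativity Lemma here) is then a matter of substituting $\mu(\mathcal{D})=-2\pi\chi^{\mathrm{orb}}=-2\pi(2-2g-\sum(\alpha_j-1)/\alpha_j)$ and watching the $\tfrac{1}{\alpha_j}\log 2$ terms cancel between the identity and elliptic pieces.

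Your worries about cancellation among many $Z(0,\sigma_k)$, about vanishing/nonvanishing of individual factors, and about combining several functional equations all evaporate once you see the telescoping; conversely, if you pursue the route you sketched without it you will be doing substantially more bookkeeping (and introducing twisted zeta functions the paper never needs) to arrive at the same place. Your anticipation that the delicate step is the elliptic term is correct---that is exactly where the paper's technical lemmas sit---but the paper handles it by explicit contour-integral computation rather than by abstractly matching with local torsion determinants.
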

Here the left hand side is the absolute value of the Ruelle zeta function at zero.
We show this equality as follows.
We see that
(the inverse of) the Ruelle zeta function at zero has the following integral expression
(Proposition~\ref{prop:eta_integral}):
\[
R_{\rho_{2N}}(0)^{-1}=
\exp\left(
\mu(\mathcal{D})\int_{0}^{N} \xi \tan \pi \xi\,d\xi
+ \sum_{\{R\}:\mathrm{elliptic}}
\int_{0}^{N} \frac{-\pi}{m(R)\sin\theta(R)}
\frac{\cos((2\theta(R)-\pi)\xi)}{\cos \pi\xi}\, d\xi \right)
\]
where $\mu(D)$ denotes the area of the base orbifold $\Gamma \backslash \upperH$,  
see Section~\ref{subsec:Ruelle_SL2N} for the details.
This integral expression of $R_{\rho_{2N}}(0)^{-1}$ is derived by
the functional equation of the Selberg zeta function in Theorem~\ref{thm:func_eq_Selberg_zeta}
for the discrete subgroup $\Gamma$ of $\PSLR$.
We allow $\Gamma$ to have torsion elements.
Then the absolute value $|R_{\rho_{2N}}(0)|^{-1}$ is expressed as
\begin{theorem}[Theorem~\ref{thm:Ruelle_zero}]
  \label{thm:intro_thm_II}
\begin{align*}
  |R_{\rho_{2N}}(0)|^{-1}
  &= \exp \left[ \mu(\mathcal{D})  \left( -\frac{N}{\pi} \log 2 \right) \right]\\
  &\quad \cdot
  \exp \left[
    \sum_{j=1}^m \left(
    \log \prod_{k=1}^N (1 - e^{(2k-1)\theta(q_j)\iu}) (1 - e^{-(2k-1)\theta(q_j)\iu})
    - \frac{2N}{m(q_j)}\log2
    \right)
    \right]
\end{align*}
where $\theta(q_j)$ and $m(q_j)$ are determined by
torsion elements $q_j$ in the fundamental group of the base orbifold.
\end{theorem}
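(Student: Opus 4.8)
The plan is to evaluate the integral expression for $R_{\rho_{2N}}(0)^{-1}$ supplied by Proposition~\ref{prop:eta_integral}. Since $|R_{\rho_{2N}}(0)|^{-1}$ is the exponential of the real part of the exponent there, and since the residue of $\xi\tan\pi\xi$ at each pole $\xi=n+\tfrac12$ is $-(n+\tfrac12)/\pi$ while that of $\tfrac{\cos((2\theta-\pi)\xi)}{\cos\pi\xi}$ at $\xi=n+\tfrac12$ is $-\sin((2n+1)\theta)/\pi$ — both real — the real part of each integral in that exponent equals its Cauchy principal value. So it suffices to evaluate these principal-value integrals, and I would split the computation into the identity contribution $\mu(\mathcal{D})\int_0^N\xi\tan\pi\xi\,d\xi$ and the sum over elliptic conjugacy classes.

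For the identity term, integrate by parts using $\tan\pi\xi\,d\xi=-\tfrac1\pi\,d\log|\cos\pi\xi|$. Because $N\in\Z$ we have $|\cos\pi N|=1$, so the endpoint terms vanish (the divergences at the intermediate half-integers cancel in the principal value by the usual symmetric estimate at a simple pole), and the integral reduces to $\tfrac1\pi\int_0^N\log|\cos\pi\xi|\,d\xi$. By $\pi$-periodicity of $|\cos\pi\xi|$ this equals $\tfrac N\pi\int_0^1\log|\cos\pi\xi|\,d\xi=\tfrac N\pi\int_0^1\log|\sin\pi\xi|\,d\xi=-\tfrac N\pi\log 2$, the classical log-sine integral. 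This gives the first factor $\exp\bigl[\mu(\mathcal{D})\bigl(-\tfrac N\pi\log 2\bigr)\bigr]$.

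For the elliptic part, group the sum in Proposition~\ref{prop:eta_integral} by the cone points: a cone point $q_j$ of order $m(q_j)$ contributes the classes $\{q_j^{\ell}\}$, $\ell=1,\dots,m(q_j)-1$, with rotation angles the multiples $\ell\theta(q_j)$. For a fixed angle $\theta$, write $\cos((2\theta-\pi)\xi)=\cos2\theta\xi\cos\pi\xi+\sin2\theta\xi\sin\pi\xi$, so the integrand becomes $\cos2\theta\xi+\sin2\theta\xi\tan\pi\xi$; integrating by parts gives $\int_0^N\tfrac{\cos((2\theta-\pi)\xi)}{\cos\pi\xi}\,d\xi=\tfrac{\sin2N\theta}{2\theta}+\tfrac{2\theta}{\pi}\int_0^N\cos2\theta\xi\,\log|\cos\pi\xi|\,d\xi$, and inserting the Fourier expansion $\log|\cos\pi\xi|=-\log2+\sum_{r\ge1}\tfrac{(-1)^{r+1}}{r}\cos2r\pi\xi$ and evaluating $\int_0^N\cos2\theta\xi\cos2r\pi\xi\,d\xi$ (using $2Nr\in\Z$) puts the integral in closed form. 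Multiplying by $\tfrac{-\pi}{m(q_j)\sin\theta}$ and summing over $\ell$, the $\log2$-terms collect via $\sum_{s\ge1}\tfrac{(-1)^s}{s}=-\log 2$, and the transcendental parts cancel after using the reflection formula $\psi(1-x)-\psi(x)=\pi\cot\pi x$ together with the antisymmetry $\ell\mapsto m(q_j)-\ell$ (which kills a leftover $\sum_{\ell=1}^{m(q_j)-1}\tfrac{\sin2N\ell\theta(q_j)}{\sin^2\ell\theta(q_j)}=0$), leaving $-\sum_{r\ge1,\,m(q_j)\nmid r}\tfrac{\sin(2Nr\theta(q_j))}{r\sin(r\theta(q_j))}$. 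Restoring the $m(q_j)\mid r$ terms and using $\sum_{k=1}^{N}\cos((2k-1)\psi)=\tfrac{\sin2N\psi}{2\sin\psi}$ with $\log(2-2\cos\psi)=-2\sum_{r\ge1}\tfrac{\cos r\psi}{r}$ identifies this with $\log\prod_{k=1}^{N}(1-e^{(2k-1)\theta(q_j)\iu})(1-e^{-(2k-1)\theta(q_j)\iu})-\tfrac{2N}{m(q_j)}\log 2$. Summing over $j$ and adding the identity factor yields the claim.

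I expect the elliptic step to be the main obstacle: isolating each cone point's contribution, carrying out the closed-form evaluation of $\int_0^N\tfrac{\cos((2\theta-\pi)\xi)}{\cos\pi\xi}\,d\xi$ (which passes through conditionally convergent series and digamma identities), and tracking the $\log2$-constants so that the coefficient $-\tfrac{2N}{m(q_j)}$ comes out exactly. Alternatively, one could bypass Proposition~\ref{prop:eta_integral} and read the elliptic factor directly off the functional equation of the Selberg zeta function in Theorem~\ref{thm:func_eq_Selberg_zeta}, but the bookkeeping is comparable.
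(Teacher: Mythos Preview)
Your identity contribution is exactly the paper's Proposition~\ref{prop:contribution_identity}. For the elliptic part your route is genuinely different: the paper (Lemmas~\ref{lemma:int_1st}--\ref{lemma:int_jth}, Proposition~\ref{prop:contribution_elliptic}) substitutes $z=e^{\pi\xi\iu/m_0}$, converting the sum of integrands over each unit interval $[k-1,k]$ into a rational function $P(z)/Q(z)$ with $Q(z)=z^{2m_0}+1$, then uses partial fractions over the roots of $Q$ and root-of-unity algebra to obtain $2\log|1-e^{-(2k-1)\pi\iu/m_0}|-\tfrac{2}{m_0}\log 2$ for the $k$-th interval. This is purely algebraic and self-contained; no Fourier series or special-function identities enter. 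Your Fourier-series approach is valid and the endgame---matching $-\sum_{r\ge1,\,m_0\nmid r}\tfrac{\sin 2Nr\theta_0}{r\sin r\theta_0}$ to the product via $\sum_{k=1}^N\cos((2k-1)\psi)=\tfrac{\sin 2N\psi}{2\sin\psi}$ and the series for $\log(2-2\cos\psi)$---is correct, but your middle step is over-engineered. After inserting the Fourier expansion and integrating, the two $\log 2$'s (the constant term versus the $\sum_r(-1)^{r+1}/r$ piece from the $1/r$ partial fraction) \emph{cancel}, leaving $\int_0^N\tfrac{\cos((2\theta-\pi)\xi)}{\cos\pi\xi}\,d\xi=\tfrac{\sin 2N\theta}{2\theta}-\tfrac{\sin 2N\theta}{2\pi}\sum_{r\ge1}(-1)^{r+1}\bigl[\tfrac{1}{r-\theta/\pi}+\tfrac{1}{r+\theta/\pi}\bigr]$; with $\theta=\ell\pi/m_0$ the identities $\sin(m_0r\pm\ell)\theta_0=\pm(-1)^r\sin\ell\theta_0$ then regroup the double sum over $(\ell,r)$ directly into $-\sum_{r'\ge1,\,m_0\nmid r'}\tfrac{\sin 2Nr'\theta_0}{r'\sin r'\theta_0}$, with no digamma reflection needed and no role for the antisymmetry $\sum_\ell\tfrac{\sin 2N\ell\theta_0}{\sin^2\ell\theta_0}=0$. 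The $-\tfrac{2N}{m_0}\log 2$ appears only at the ``restoring the $m_0\mid r$ terms'' step, via $\sum_{s\ge1}(-1)^s/s$ as you say. So your strategy is sound; the paper's contour-integral route is more elementary and fully carried out, while yours ties the answer to classical Fourier expansions but would need the bookkeeping above tightened to stand as a proof.
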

\noindent
Comparing the result of $|R_{\rho_{2N}}(0)|$ with the Reidemeister torsion as in
Proposition~\ref{prop:Rtorsion_explicit},
we have the equality $|R_{\rho_{2N}}(0)| = \Tor{M}{\rho_{2N}}$.

Furthermore we observe that
the second factor in the right hand side of Theorem~\ref{thm:intro_thm_II}
does not affect in the limit of the leading coefficient
in $\log|R_{\rho_{2N}}(0)|$ as $N$ goes to infinity.
Hence only the first factor contributes the limit of the leading coefficient in $\log|R_{\rho_{2N}}(0)|$. 
Since it was shown in~\cite{Yamaguchi:asymptoticsRtorsion} that
the growth order of $\log|\Tor{M}{\rho_{2N}}| = \log|R_{\rho_{2N}}(0)|$ equals to $2N$,
this means that
\[
\frac{1}{2N}
\sum_{j=1}^m \left(
\log \prod_{k=1}^N (1 - e^{(2k-1)\theta(q_j)\iu}) (1 - e^{-(2k-1)\theta(q_j)\iu})
- \frac{2N}{m(q_j)}\log2
\right)
\xrightarrow{N \to \infty}
0
\]
and 
\[
  \lim_{N \to \infty} \frac{\log |\Tor{M}{\rho_{2N}}|}{2N}
  = \lim_{N \to \infty} \frac{\log |R_{\rho_{2N}}(0)|}{2N}
  = \frac{\mu(D)}{2\pi} \log 2. 
\]
We will discuss the above limits in Subsection~\ref{subsec:relation_asymptotic}.
By the Gauss-Bonnet theorem $\mu(D) = -2\pi \chi^{\mathrm{orb}}$, the following equality also holds:
\[
\lim_{N \to \infty} \frac{\log |\Tor{M}{\rho_{2N}}|}{2N}
= \lim_{N \to \infty} \frac{\log |R_{\rho_{2N}}(0)|}{2N}
=-\chi^{\mathrm{orb}}\log 2.
\]
Here $\chi^{\mathrm{orb}}$ is the orbifold Euler characteristic of the base orbifold $\Gamma \backslash \upperH$.
It was also shown in the previous work~\cite{Yamaguchi:asymptoticsRtorsion}
that $-\chi^{\mathrm{orb}}\log 2$ gives an upper bound of the limits
of the leading coefficients in the logarithm of the Reidemeister torsion for
a Seifert fibered space among the sequences of $\SL[2N]$-representations of its fundamental group.
For $M = \Gamma \backslash \PSLR$,
the sequence of $\SL[2N]$-representations induced from the geometric description of $M$
attains the upper bound in the limits
of the leading coefficients in the logarithm of the Reidemeister torsion for $M$
among sequences of $\SL[2N]$-representations of $\pi_1(M)$.

We note the differences from previous results on the Ruelle zeta function and the Reidemeister torsion.
D.~Fried observed 
the Ruelle zeta function and the Reidemeister torsion
for the unit sphere bundles $S(TX)$ of a closed oriented hyperbolic manifold $X$
with orthogonal representations of $\pi_1(S(TX))$ in~\cite{Fried86:AnalyticTorsionGeodesic}
and
especially for a Seifert fibered space $M = \Gamma \backslash \PSLR$ with
unitary representations of $\pi_1(M)$
in~\cite{Fried86:FuchsianGroupsRtorsion}.
We deal with the Ruelle zeta function and the Reidemeister torsion for non-unitary representations
of $\pi_1(M)$.
Fried derived the equality between the Ruelle zeta function and the Reidemeister torsion
by using the function equation of the Selberg zeta function.
We also derive the relation between the Ruelle zeta function and the Reidemeister torsion
by a functional equation of the Selberg zeta function.
Our functional equation is a different version from
those in~\cite{Fried86:AnalyticTorsionGeodesic, Fried86:FuchsianGroupsRtorsion}.

One can also find a number of developments on M\"uller's work
in the both sides of the analytic torsion and the Reidemeister torsion.
We will touch several developments
on the asymptotic behaviour of the analytic or Reidemeister torsion
for hyperbolic manifolds briefly.
P.~Menal-Ferrer and J.~Porti showed
the asymptotic behavior of the Reidemeister torsion
for non-compact hyperbolic $3$-manifolds of finite volume with cusps
in~\cite{FerrerPorti:HigherDimReidemeister}.
W\"uller and J.~Pfaff developed
the study of the asymptotic behavior of the analytic torsion for odd dimensional hyperbolic manifolds
of finite volume in~\cite{MuellerPfaff12:AsymptoticAnalyticNoncpt, MuellerPfaff13:AsymptoticRaySingerCpt}.
Pfaff also introduced in~\cite{Pfaff14:AnalyticvsRtorsion} the normalized analytic torsion 
corresponding to the higher-dimensional Reidemeister torsion invariant
introduced by Menal-Ferrer and Porti in~\cite{FerrerPorti:HigherDimReidemeister}
and compared them 
in detail.
H.~Goda and L.~B\'enard, J.~Dubois, M.~Heusener and J.~Porti provided
the asymptotic behavior of the higher-dimensional Reidemeister invariants
with the descriptions in terms of the twisted Alexander invariants.
These descriptions are given for hyperbolic knot complements in~\cite{Goda:TAP_hyp_vol} and
cusped hyperbolic $3$-manifolds in~\cite{BenardDuboisHeusenerPorti:AsymptoticTAP}. 

\subsection*{Organization}
Section~\ref{sec:preliminaries}
is devoted to reviews of the materials which we need in this paper.
Subsections~\ref{subsec:review_Seifert} and~\ref{subsec:geodesic_flow_Ruelle}
review the unit tangent bundle over a $2$-dimensional hyperbolic orbifold as a Seifert fibered space
and a dynamical system on the Seifert fibered space, called the geodesic flow.
The Subsections~\ref{subsec:Ruelle_Selberg_zeta},
\ref{subsec:review_trace_formula}, \ref{subsec:Selberg_transformation} and
\ref{subsec:review_functional_eqn} provide brief summaries
on the Ruelle zeta function, the Selberg zeta function, 
the Selberg trace formula and the functional equation
obtained as an application of the Selberg trace formula.
Section~\ref{sec:main_section} contains our main results.
Subsection~\ref{subsec:SLnRepSeifert} discusses the $\SL[2N]$-representations
given by the geometric description of our Seifert manifold.
We will see the explicit value of $|R_{\rho_{2N}}(0)|$ in Subsection~\ref{subsec:Ruelle_SL2N}
and the equality between $|R_{\rho_{2N}}(0)|$ and the Reidemeister torsion
in Subsection~\ref{subsec:Ruelle_Rtorsion_eq}.
Finally Subsection~\ref{subsec:relation_asymptotic}
presents the relation of $|R_{\rho_{2N}}(0)|$ to
the asymptotic behavior of  
the Reidemeister torsion for
a sequence of $2N$-dimensional representations as $N$ goes to infinity.

\section{Preliminaries}
\label{sec:preliminaries}

Let $\Gamma \subset \PSLR = \SLR / \{\pm \I\}$ be a discrete subgroup without parabolic elements.
We denote by $\upperH$ the upper half plane
which has the metric $(dx^2 + dy^2) / y^2$ with constant negative curvature $-1$.
We identify $\PSLR$ with the orientation preserving isometry group of $\upperH$
by linear fractional transformations.
The assumption on parabolic elements means that both of the manifold $M=\Gamma \backslash \PSLR$
and the orbifold $\Sigma = \Gamma \backslash \upperH$ are compact without boundary.

Throughout this paper,
a nontrivial element $g \in \PSLR$ is called {\it hyperbolic}, {\it elliptic} and {\it parabolic}
if a representative $A \in \SLR$ of $g$ has
the trace $|\trace A| > 2$, $|\trace A| < 2$ and $|\trace A|=2$ respectively.
Hence a representative of $g \in \PSLR$ is conjugate to one of the following matrices in $\SLR$:
\[
\pm \begin{pmatrix} e^{\ell/2} & 0 \\ 0 & e^{-\ell/2} \end{pmatrix},\quad 
\pm \begin{pmatrix} \cos\theta & -\sin\theta \\ \sin\theta & \cos\theta \end{pmatrix}, \quad
\begin{pmatrix} \pm 1 & * \\ 0 & \pm 1 \end{pmatrix}
\]
where $\ell$ is the translation length along the axis of a hyperbolic element $g$.
We often identify an element $g \in \PSLR$ with its representative in $\SLR$.

We will give brief reviews on $3$-dimensional manifolds $\Gamma \backslash \PSLR$
in Subsection~\ref{subsec:review_Seifert} and the Ruelle and Selberg zeta functions
associated with a dynamical flow on $\Gamma \backslash \PSLR$
in Subsections~\ref{subsec:geodesic_flow_Ruelle} and~\ref{subsec:Ruelle_Selberg_zeta}.
Subsections~\ref{subsec:review_trace_formula}, \ref{subsec:Selberg_transformation}
and~\ref{subsec:review_functional_eqn} give reviews on the Selberg trace formula and
the functional equation of the Selberg zeta function
in the case that a discrete subgroup $\Gamma$ has elliptic elements.

\subsection{Seifert Fibered space $\Gamma \backslash \PSLR$}
\label{subsec:review_Seifert}
It is known that the quotient of $\PSLR$ by a discrete subgroup $\Gamma$ is
a Seifert fibered space.
We review a $3$-dimensional manifold $\Gamma \backslash \PSLR$
as a Seifert fibered space.
We refer readers to \cite{scott83:3manifolds, NeumannJankins:Seifert} for the details.

Since every element $g$ of $\PSLR$ acts on $\upperH$ as an orientation preserving isometry,
each $g$ also acts on
the tangent bundle $T\upperH$ of $\upperH$ as an isometry
with metric induced from $\upperH$ .
Denote by $S(T\upperH)$ the {\it unit tangent bundle} of $\upperH$. 
Actually the action of $\PSLR$ on $S(T\upperH)$ is free,
\ie every stabiliser is trivial, and transitive
(for example,
the orbit of $z=\sqrt{-1}$ by $\PSLR$ is $\upperH$ and
the stabiliser of $z$ is given by $\SO$ 
which rotates unit tangent vectors in $S(T_z\upperH$)).
Thus we can regard $S(T\upperH)$ as the homogeneous space $\PSLR$.
The actions of a discrete subgroup $\Gamma$ in $\PSLR$ on $S(T\upperH) = \PSLR$ and $\upperH$
give the unit tangent bundle $\Gamma \backslash \PSLR$ over the orbifold $\Sigma=\Gamma \backslash \upperH$.
Since $S(T\Sigma) = \Gamma \backslash \PSLR$ is a circle bundle over $\Sigma$, we think of $\Gamma \backslash \PSLR$ as
a Seifert fibered space with the base orbifold $\Sigma$.

Under our assumption that $\Gamma$ has no parabolic elements, the singularities of the orbifold $\Sigma$ are only cone points.
If the orbifold $\Sigma$ has $m$ cone points with angle $2\pi / \alpha_j$
$(1 \leq j \leq m,\, \alpha_j \in \Z,\, \alpha_j \geq 2)$
and the underlying closed surface with genus $g$,
then $\Gamma = \pi_1(\Sigma)$ has the following
presentation:
\begin{equation}
  \label{eqn:presentation_Gamma}
  \Gamma
  = \langle a_1, b_1, \ldots, a_g, b_g, q_1, \ldots, q_m
  \,|\, q_j^{\alpha_j} = 1, \prod_{j=1}^m q_j \prod_{i=1}^2 [a_i, b_i] = 1\rangle
\end{equation}
where $a_i$ and $b_i$ are generators of the fundamental group of the underlying closed surface
and $q_j$ is the homotopy class of a closed loop around the $j$-th cone point.
The fundamental group of $\Gamma \backslash \PSLR$ has the following presentation:
\begin{align}
  &\pi_1(\Gamma \backslash \PSLR) \notag\\
  &= \langle a_1, b_1, \ldots, a_g, b_g, q_1, \ldots, q_m, h
  \,|\, h:\text{central}, q_j^{\alpha_j}h^{\alpha_j -1} = 1, \prod_{j=1}^m q_j \prod_{i=1}^g [a_i, b_i] = h^{2-2g}
  \rangle.
  \label{eqn:presentation_pi1_Seifert}
\end{align}
Here $h$ denotes the homotopy class of a regular fiber in the Seifert fibration of $\Gamma \backslash \PSLR$,
in other words, $h$ is the homotopy class of a fiber in $S(T\Sigma)$ on a smooth point of $\Sigma$.

It is also known that the exceptional fiber $\ell_j$ on the $j$-th cone point is expressed as
\begin{equation}
  \label{eqn:exceptional_fiber}
  \ell_j = q_j^{-1}h^{-1}.
\end{equation}

\begin{remark}
  The Seifert index of $\Gamma \backslash \PSLR$ is
  $(g; (1, 2g-2), (\alpha_1, \alpha_1-1), \ldots, (\alpha_m, \alpha_m-1))$.
  We follow the convention that the fundamental group $\pi_1(\Gamma \backslash \PSLR)$
  of a Seifert fibered space with index
  $(g; (1, b), (\alpha_1, \beta_1), \ldots, (\alpha_m, \beta_m))$ has the presentation:
  \[
  \langle a_1, b_1, \ldots, a_g, b_g, q_1, \ldots, q_m, h
  \,|\, h:\text{central}, q_j^{\alpha_j}h^{\beta_j} = 1, \prod_{j=1}^m q_j \prod_{i=1}^g [a_i, b_i] = h^{-b}
  \rangle
  \]
  and the exceptional fiber $\ell_j$ is expressed as
  $\ell_j = q_j^{\mu_j}h^{\nu_j}$ 
  for $\mu_j$ and $\nu_j \in \Z$ such that $\alpha_j \nu_j - \beta_j \mu_j = -1$.
\end{remark}

\subsection{Review on geodesic flows and the Ruelle zeta function}
\label{subsec:geodesic_flow_Ruelle}
We denote by $TX$ the tangent bundle over a closed Riemannian manifold $(X, g)$ and
by $S(TX)$ the unit tangent bundle.
Let $\phi_t$ be the following dynamical system on the unit tangent bundle $S(TX)$:
\begin{align*}
  \phi_t : S(TX) \times \R &\to S(TX) \\
  ((x, v), t) &\mapsto (\exp_x(tv), \frac{d}{dt} \exp_x(tv)),
\end{align*}
where $\exp:TX \to X$ is the exponential map given by the metric $g$.
This dynamical system is called the {\it geodesic flow} on $S(TX)$.

We can also consider the geodesic flow for a hyperbolic orbifold $\Sigma = \Gamma \backslash \upperH$
when $\Sigma$ has cone points.
There exists another definition of the geodesic flow on $S(T\Sigma)$
by a one parameter group action.
We adopt this alternative definition of the geodesic flow on $S(T\Sigma)$
and use it to define the related dynamical zeta function.

When we regard the manifold $\Gamma \backslash \PSLR$ as the unit tangent bundle $S(T\Sigma)$ of $\Sigma$,
the action of the one parameter group 
$a_t=\left\{\left. \begin{pmatrix} e^{t/2} & 0 \\ 0 & e^{-t/2} \end{pmatrix} \right| t \in \R \right\}$
on $\PSLR$ from the right, that is
\begin{align*}
  \PSLR \times \R &\to \PSLR \\
  (A, t) &\mapsto A\begin{pmatrix} e^{t/2} & 0 \\ 0 & e^{-t/2} \end{pmatrix},
\end{align*}
defines a dynamical system $\phi_t$ on $\PSLR$.
The one parameter group
$a_t$ defines the flow on $S(T\upperH) = \PSLR$ of the left invariant vector field generated by the element
$\begin{pmatrix} 1/2 & 0 \\ 0 & -1/2 \end{pmatrix}$ of the Lie algebra $\sllR$.
The orbit $g a_t$ for $g \in \PSLR$ projects to a geodesic through $g \cdot \sqrt{-1}$ on $\upperH$.
This action of the one parameter group $a_t$ also induces
the geodesic flow on $S(T\Sigma) = \Gamma \backslash \PSLR$.

Set $M = \Gamma \backslash \PSLR$. Then we can define the Ruelle zeta function
for the geodesic flow $\phi_t$ and a representation $\rho: \pi_1(M) \to \GL$
as follows.
\begin{definition}
  For $M=\Gamma \backslash \PSLR$ and a representation $\rho: \pi_1(M) \to \GL$, 
  the {\it Ruelle zeta function} $R_\rho(s)$ is defined as
  \[
  R_{\rho}(s) = \prod_{\gamma:\textrm{prime}} \det(\I - \rho(\gamma) e^{-s\ell(\gamma)})
  \]
  where $\gamma$ runs over the prime closed orbits of the geodesic flow $\phi_t$
  and $\ell(\gamma)$ denotes the period of $\gamma$.
\end{definition}
A closed orbit $\gamma$ of $\phi_t$ is called {\it prime} if $\gamma$ is not expressed as $\gamma = \gamma_0^{m}$
by any integer $m$ with $|m| > 1$ and other closed orbit $\gamma_0$.

The Ruelle zeta function is defined for $\re s \gg 0$ and
it can be extended to the complex plane meromorphically by
the meromorphic extension of the Selberg zeta function.
We will see the expression of the Ruelle zeta function in terms of the Selberg zeta functions
in Subsections~\ref{subsec:Ruelle_Selberg_zeta} and~\ref{subsec:Ruelle_SL2N}.

\begin{remark}
  One can define the Ruelle zeta function for more general dynamical systems
  which are called the {\it Anosov flows}.
  For the details, we refer to~\cite{Fried86:RuelleSelbergI, Shen18:FriedConj}
  on the Anosov flow and the Ruelle zeta function concerning the Reidemeister torsion.
  The alternative definition of the geodesic flow on $S(T\Sigma)$
  is referred to as an {\it algebraic Anosov flow}.
\end{remark}

Actually the period of $\gamma$ is thought of as the length of the closed geodesic on $\upperH$.
This is due to that 
the closed orbits of $\phi_t$ arise from the (unoriented) closed geodesics on $\Sigma = \Gamma \backslash \upperH$ by the definition of the geodesic flow.
Moreover we can regard a closed orbit $\gamma$ of $\phi_t$ as the conjugacy class of a hyperbolic element in $\Gamma$.
We review this correspondence shortly below.

Each closed orbit of the geodesic flow $\phi_t$ gives a closed geodesic on $\Sigma$.
If we lift geodesics on $\Sigma$ to $\upperH$,
the lifts are given by lines or arcs perpendicular to the real line.
Since such lines or arcs perpendicular to the real line are the axes of linear fractional transformations of hyperbolic elements in $\Gamma$, 
each closed geodesic on $\Sigma$ represents a hyperbolic element of $\Gamma$.
If we take a different starting point of a closed geodesic,
then the corresponding hyperbolic element of $\Gamma$ is changed by conjugation.
Hence there is a one--to--one correspondence between closed orbits of the geodesic flow $\phi_t$ and 
the conjugacy classes of hyperbolic elements in $\Gamma$.

Under correspondence between the closed orbits of the geodesic flow and
the conjugacy classes of hyperbolic elements in $\Gamma$, we can relate the Ruelle zeta function
to another zeta function called the Selberg zeta function.

\subsection{The Selberg zeta function}
\label{subsec:Ruelle_Selberg_zeta}
We review the Selberg zeta function for a discrete subgroup $\Gamma$ of $\PSLR$ and
the relation to the Ruelle zeta function for the geodesic flow on
the unit tangent bundle over $\Gamma \backslash \upperH$.
In what follows,  
$N(T)$ stands for the eigenvalue ratio $> 1$ of a representative matrix for a hyperbolic element $T \in \PSLR$,
which is referred to as the {\it norm} of $T$. 
\begin{definition}[the Selberg zeta function]
  The {\it Selberg zeta function} $Z(s)$ for a discrete subgroup $\Gamma$ of $\PSLR$ is defined by
  \begin{equation}
    \label{eqn:def_SelbergZeta}
  Z(s) = \prod_{\{T\}:\textrm{prime}} \prod_{k=0}^\infty (1-N(T)^{-s-k})
  \end{equation}
  for $\re(s) > 1$.
  Here $\{T\}$ runs over the conjugacy classes of nontrivial prime hyperbolic elements $T$ in $\Gamma$.
  A nontrivial element $T$ in $\Gamma$ is called {\it prime} if $T$ is not expressed as
  $T = T_0^m$ by any integer $m$ with $|m|>1$ and other element $T_0$.
\end{definition}
The right hand side of~\eqref{eqn:def_SelbergZeta} converges absolutely for  $\re(s) > 1$.
It is also known that $Z(s)$ can be extended to the complex plane meromorphically.
If $\gamma$ is the axis of the linear fractional transformation by
a hyperbolic element $T$ in $\Gamma$, then
$\gamma$ is a closed geodesic and the hyperbolic element $T$ is conjugate to
the diagonal matrix
$\begin{pmatrix} e^{\ell(\gamma)/2} & 0 \\ 0 & e^{-\ell(\gamma)/2} \end{pmatrix}$
under the correspondence between the conjugacy class $\{T\}$ and
the closed geodesic $\gamma$ in $\Gamma \backslash \upperH$.
Thus the norm $N(T)$ for the conjugacy class $\{T\}$ satisfies 
\[N(T) = e^{\ell(\gamma)}.\]

When we take the $1$-dimensional trivial representation $\rho : \pi_1(M) \to \mathrm{GL}_1(\C)$,
we denote the Ruelle zeta function by $R(s)$.
This classical Ruelle zeta function $R(s)$ is expressed as
\begin{align}
  R(s)
  &= \prod_{\gamma : \textrm{prime}} \det(1 - e^{-s\ell(\gamma)}) \notag\\
  &= \prod_{\{T\}:\textrm{prime}} \det(1 - N(T)^{-s}) \notag\\
  &= \frac{Z(s)}{Z(s+1)}. \label{eqn:classical_Ruelle_Selberg}
\end{align}

We will see a similar expression of the Ruelle zeta function for an $\SL[n]$-representation
of $\pi_1(\Gamma \backslash \upperH)$ in Section~\ref{sec:main_section}.

\subsection{Selberg Trace formula for a discrete subgroup with elliptic elements}
\label{subsec:review_trace_formula}
We will need the functional equation of the Selberg zeta function
for a discrete subgroup with elliptic elements in $\PSLR$.
The functional equation is derived from an application of the Selberg trace formula.
We review the Selberg trace formula for a discrete subgroup with elliptic elements
to make this article self-contained.
Since it is assumed that $\Gamma$ has no parabolic elements throughout the paper,
we do not consider orbifolds $\Gamma \backslash \upperH$ with cusps.

The Selberg trace formula gives a method of finding the trace of some linear operator
on the space of complex valued functions on $\Sigma = \Gamma \backslash \upperH$.
Here we identify functions on $\Sigma$ with functions $f(z)$ on $\upperH$ such that
$f(\sigma z) = f(z)$ for all $\sigma \in \Gamma$,
which are called {\it automorphic functions} on $\upperH$
with respect to $\Gamma$.

We consider linear operators $L$
commuting with the action of $\PSLR$ on functions $f(z)$ ($z \in \upperH$), that is,
$L(f(\sigma z)) = (Lf)(\sigma z)$. 
These linear operators are called {\it invariant operators}.
Each invariant operator acts on the space of automorphic functions with respect to $\Gamma$.
It is known that
the Laplacian $D = y^2 (\partial^2 / \partial x^2 + \partial^2 / \partial y^2)$ on $\upperH$
is also an invariant operator.
Under the assumption that $\Sigma$ has no cusps, 
the space $L^2(\Gamma \backslash \upperH)$ of square integrable automorphic functions
is a countably infinite dimensional vector space over $\C$ and has a basis consisting of
the eigenfunctions of the Laplacian $D$.
We restrict our attention to {\it integral operators} $L$, which are defined for a function $f(z)$
on $\upperH$ as
\[(Lf)(z) = \int_{\upperH}k(z, w)f(w)\,dw\]
with a kernel function $k(z, w) : \upperH \times \upperH \to \C$.
A kernel function $k(z, w)$ defines an invariant integral operator $L$
if and only if $k(z, w)$ satisfies $k(\sigma z, \sigma w) = k(z, w)$ for any $\sigma \in \PSLR$.
Such a kernel function is called a {\it point pair invariant}.

Set $K(z,w) = \sum_{\sigma \in \Gamma} k(z, \sigma w)$.
Then we regard the integral operator $L$
with a point pair invariant $k(z, w)$ on the space of automorphic functions 
as the integral operator with kernel $K(z, w)$ on the space of functions $f$ on $\Sigma$ by 
\[
L(f)(z)
= \int_{\upperH} k(z, w)f(w)\,d\mu(w)
= \int_{\mathcal{D}} \sum_{\sigma \in \Gamma} k(z, \sigma w) f(\sigma w)\,d\mu(w)
= \int_{\mathcal{D}} K(z, w)  f(w)\,d\mu(w)
\]
where $\mathcal{D}$ is a fundamental region of $\Sigma$ in $\upperH$.

The integral $\int_{\mathcal{D}} K(z,z)\,dz$ defined from a point pair invariant $k(z, w)$
is called the {\it trace} of the above integral operator $L$ defined by $K(z, w)$.
As in the finite dimensional case that
a linear map $f$ on a finite dimensional vector space is diagonalizable
if another linear map $g$ commuting with $f$ is diagonalizable,
it is known that $L$ on $L^2(\Gamma \backslash \upperH)$ commutes with the Laplacian $D$ and then
$L$ is diagonalized with respect to a basis
consisting of eigenfunctions $\{\varphi_n\}$ of $D$
such that
$D \varphi_n = \lambda_n \varphi_n$
($0 = \lambda_0 > \lambda_1 \geq \lambda_2 \geq \ldots \to -\infty$).
Each eigenfunction $\varphi_n$ of $D$ is also an eigenfunction of $L$ and
the eigenvalue is determined by $\lambda_n$
and the point pair invariant $k(z, w)$ defining $L$.
We write $\Lambda_k(\lambda_n)$ for the eigenvalue of an integral operator $L$
given by a point pair invariant $k(z, w)$
on $L^2(\Gamma \backslash \upperH)$
such that 
\[
L(\varphi_n) = \Lambda_k(\lambda_n)\varphi_n.
\]
The trace $\int_{D} K(z,z)\,d\mu(z)$ is thought of as
the sum of the eigenvalues $\Lambda_k(\lambda_n)$ $(n=0, 1, \ldots )$.
A.~Selberg has shown in~\cite{Selberg56:HarmonicAnalysis}
that this trace may be reduced to the sum of components corresponding to
conjugacy classes of $\Gamma$ and the eigenvalues $\Lambda_k(\lambda_n)$
and each component can be expressed by the function $h(r)$ made from $k(z, w)$.
The function $h(r)$ is called the {\it Selberg transform} of $k(z, w)$.

More precisely, the trace $\int_{D} K(z,z)\,d\mu(z)$ can be expressed as 
\[
\int_{\mathcal{D}} K(z,z)\,dz
= \sum_{\{\tau\}} \sum_{\sigma \in \{\tau\}} \int_{\mathcal{D}} k(z, \sigma z) \,d\mu(z)
\]
where $\{\tau\}$ means the conjugacy class of $\tau$ in $\Gamma$.
We can rewrite this equality as 
\begin{align}
\sum_{n = 0}^{\infty} \Lambda_k(\lambda_n)  
&= \int_{\mathcal{D}} k(z, \I z) \,d\mu(z) \label{eqn:equality_trace_L} \\
&\quad +\sum_{\{R\} : \mathrm{elliptic}} \sum_{\sigma \in \{R\}} \int_{\mathcal{D}} k(z, \sigma z) \,d\mu(z) \notag\\
&\quad +\sum_{\{T\} : \mathrm{hyperbolic}} \sum_{\sigma \in \{T\}} \int_{\mathcal{D}} k(z, \sigma z) \,d\mu(z) \notag
\end{align}
The first, second and third terms in the right hand side of~\eqref{eqn:equality_trace_L}
are the contributions of the identity, elliptic and hyperbolic elements
in $\Gamma$ respectively.
We will review that the both sides can be expressed by using the Selberg transformation $h(r)$
in the next subsection.

\subsection{The Selberg transformation}
\label{subsec:Selberg_transformation}
It is known that a kernel function $k(z, w)$ satisfies $k(\sigma z, \sigma w) = k(z, w)$
if and only if there exists a function $\Phi$ on $[0, \infty)$ such that
\[k(z, w) = \Phi\left( \frac{|z-w|^2}{\img(z)\img(w)} \right).\]

Let a point pair invariant $k(z, w)$ on $z$ and $w$ in $\upperH \times \upperH$ be
\[ k(z, w) = \Phi\left( \frac{|z-w|^2}{\img(z)\img(w)} \right) \]
for a $C^\infty$-function $\Phi$ with compact support on the interval $[0, \infty)$.
Set $Q(v)$ as
\[
Q(v) = \int_v^\infty \frac{\Phi(t)}{\sqrt{t-v}} \,dt
\]
and $g(u) := Q(e^u + e^{-u} -2)$.
Then we define the Selberg transform $h(r)$ of $k(z, w)$ as the Fourier transform of $g(u)$,
\ie
\[
h(r) := \int_{-\infty}^{\infty} g(u) e^{ru \iu}\,du.
\]

The Selberg transform $h(r)$ of $k(z, w)$ gives the eigenvalue $\Lambda_k(\lambda_n)$
of the integral operator $L$ defined by $k(z, w)$ as follows:
\[ \Lambda_k(\lambda_n) = h(r_n) \]
where $r_n$ satisfies $\lambda_n = -\frac{1}{4} - r_n^2$.

The value of $h(r_n)$ does not depend on the choice of $r_n$
since it is known that $h(r)$ is an even function. 
The sum of eigenvalues of the integral operator $L$ defined by a point pair invariant
$k(z, w)$ turns into the sum of
$h(r_n)$ in which we take $r_n$ as one of the complex numbers satisfying $\lambda_n = -\frac{1}{4} - r_n^2$
for the eigenvalue $\lambda_n$ of the Laplacian $D$ on $L^2(\Gamma \backslash \upperH)$.
We can rewrite each term in the equality~\eqref{eqn:equality_trace_L} with the Selberg transformation $h(r)$.
The resulting equality is called the {\it Selberg trace formula}
(we refer to~\cite{Selberg56:HarmonicAnalysis} and \cite[Appendix]{Kubota73:EisensteinSeries} for the details).

\begin{theorem}[The Selberg trace formula with elliptic elements]
  \label{thm:trace_formula}
  Suppose that $L$ is the integral operator on $L^2(\Gamma \backslash \upperH)$
  defined by the kernel $K(z, w) = \sum_{\sigma \in \Gamma} k(z, \sigma w)$
  given by a point pair invariant $k(z, w)$ on $\upperH \times \upperH$.
  Let $h(r)$ be the Selberg transform of $k(z, w)$.
  
  Then the equality~\eqref{eqn:equality_trace_L} on the trace of $L$ turns out to be
  \begin{align}
    \sum_{n=0}^\infty h(r_n)
    &= \frac{\mu(\mathcal{D})}{4\pi}\int_{-\infty}^{\infty} r h(r) \tanh(\pi r)\,dr
    \label{eqn:Selberg_trace_formula}\\
    &\quad + \sum_{\{R\}:\mathrm{elliptic}} \frac{1}{2m(R)\sin \theta (R)} \int_{-\infty}^\infty \frac{e^{-2 \theta (R) r}}{1+e^{-2\pi r}}h(r)\,dr \notag\\
    &\quad + \sum_{\{T\}:\mathrm{hyperbolic}} \frac{\log N(T_0)}{N(T)^{1/2} - N(T)^{-1/2}} g(\log N(T)) \notag
  \end{align}
  where
  \begin{itemize}
  \item
    we take $r_n$ as one of the complex numbers satisfying $\lambda_n = -\frac{1}{4} - r_n^2$
    for the eigenvalue $\lambda_n$ of the Laplacian $D$;
  \item
    $m(R)$ denotes the order of the centralizer $Z(R) = \{\sigma \in \Gamma \,|\, \sigma R \sigma^{-1} = R\}$;
  \item
    $\theta(R) \in (0, \pi)$ is determined by $\trace R = 2\cos \theta(R)$ and;
  \item
    $T_0$ is a prime hyperbolic element generating the centralizer
    \[Z(T) = \{ \sigma \in \Gamma \,|\, \sigma T \sigma^{-1} = T\}\]
    of $T$ in $\Gamma$.
  \end{itemize}
  
  The first, second and third terms in the right hand side are the contributions of
  the identity, elliptic and hyperbolic elements in $\Gamma$ respectively.
  The function $g(u)$ is also expressed as
  \[
  g(u) = \frac{1}{2\pi}\int_{-\infty}^\infty h(r)e^{-ur \iu }\,dr.
  \]  
\end{theorem}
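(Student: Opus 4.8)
The plan is to evaluate the trace $\int_{\mathcal{D}} K(z,z)\,d\mu(z)$ in the two ways already recorded in~\eqref{eqn:equality_trace_L}. On the one hand it equals $\sum_{n=0}^\infty \Lambda_k(\lambda_n) = \sum_{n=0}^\infty h(r_n)$; on the other hand it splits, by grouping the elements of $\Gamma$ into conjugacy classes, into the identity, elliptic, and hyperbolic contributions. The content of the theorem is therefore to rewrite each of these three geometric contributions in terms of the Selberg transform $h(r)$ (equivalently the function $g(u)$). For the spectral side, one uses that $\Phi$ is $C^\infty$ with compact support, so $g(u)=Q(e^u+e^{-u}-2)$ is smooth with compact support and hence, by Paley--Wiener, $h(r)$ is entire, even, and rapidly decreasing on every horizontal strip; combined with the Weyl asymptotics for the eigenvalues $\lambda_n$ of $D$ on the compact orbifold $\Gamma\backslash\upperH$ (no cusps, since $\Gamma$ has no parabolic elements), this gives absolute convergence of $\sum_n h(r_n)$ and shows that $L$ is of trace class with trace equal to that sum, identifying the left-hand side of~\eqref{eqn:Selberg_trace_formula}.

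For the identity term, $\int_{\mathcal{D}} k(z,\I z)\,d\mu(z) = \Phi(0)\,\mu(\mathcal{D})$, so what is needed is the inversion formula $\Phi(0) = \frac{1}{4\pi}\int_{-\infty}^{\infty} r\,h(r)\tanh(\pi r)\,dr$. This is purely analytic: invert the chain from $h$ to $g$ by Fourier inversion, from $g$ to $Q$ by the substitution $u\mapsto e^u+e^{-u}-2$, and from $Q$ to $\Phi$ by the Abel-type formula $\Phi(t)=-\frac{1}{\pi}\int_t^\infty \frac{Q'(v)}{\sqrt{v-t}}\,dv$, then set $t=0$. The hyperbolic $\tanh(\pi r)$ appears when one shifts the resulting contour integral past the poles of $1/\sinh$ and sums the residues.

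For the elliptic term attached to a class $\{R\}$, one merges the inner sum over $\sigma\in\{R\}$ with the integral over $\mathcal{D}$ into a single integral over $Z(R)\backslash\upperH$, where the centralizer $Z(R)$ is finite cyclic of order $m(R)$. Conjugating $R$ to a rotation by $2\theta(R)$ about a fixed point and passing to the disc model in geodesic polar coordinates centred there, $k(z,Rz)$ depends only on the radial variable; the angular integration produces the factor $1/m(R)$ and the radial integration, after expressing the point-pair variable through $\sin\theta(R)$ and using the Fourier representation of $\Phi$ in terms of $h$, yields exactly $\frac{1}{2m(R)\sin\theta(R)}\int_{-\infty}^\infty \frac{e^{-2\theta(R)r}}{1+e^{-2\pi r}}h(r)\,dr$. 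For a hyperbolic class $\{T\}$ with primitive generator $T_0$ of the infinite cyclic centralizer $Z(T)$, the same merging gives an integral over $Z(T)\backslash\upperH$; in coordinates adapted to the axis of $T$ (so that $T$ acts as $z\mapsto N(T)z$) this integral collapses directly to $\frac{\log N(T_0)}{N(T)^{1/2}-N(T)^{-1/2}}\,g(\log N(T))$.

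I expect the main obstacle to be the identity contribution: carrying out the contour shift and the passage from $\Phi$ to $h$ rigorously, and in particular justifying the convergence of $\int r\,h(r)\tanh(\pi r)\,dr$ together with the interchange of the eigenvalue sum and the integral, is the most delicate point. By contrast the elliptic and hyperbolic terms are essentially local computations once the orbit-unfolding step — replacing the sum over a conjugacy class plus the integral over $\mathcal{D}$ by an integral over the quotient of $\upperH$ by the centralizer — has been set up; the absence of parabolic elements is exactly what keeps all these integrals convergent and removes any continuous-spectrum contribution.
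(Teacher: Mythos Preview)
The paper does not actually prove Theorem~\ref{thm:trace_formula}. Subsections~\ref{subsec:review_trace_formula} and~\ref{subsec:Selberg_transformation} set up the ingredients --- the decomposition~\eqref{eqn:equality_trace_L} of the trace into conjugacy-class contributions, and the definition of the Selberg transform $h(r)$ with the relation $\Lambda_k(\lambda_n)=h(r_n)$ --- but the theorem itself is stated as a known result, with the evaluation of each term deferred to Selberg's original paper and the appendix of Kubota's book. So there is no ``paper's own proof'' to compare against.

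Your outline is correct and is precisely the classical argument one finds in those references (and in Hejhal): unfold each conjugacy-class sum into an integral over $Z(\sigma)\backslash\upperH$, compute the identity term via the Abel/Fourier inversion chain $h\to g\to Q\to\Phi$ at $t=0$, the elliptic term in geodesic polar coordinates about the fixed point, and the hyperbolic term in coordinates along the axis. Your remarks about Paley--Wiener, trace-class, and the role of the no-parabolics assumption are also the standard justifications. In short, your proposal is the proof the paper is citing rather than giving.
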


Note that there exists a prime hyperbolic element $T_0$ for any hyperbolic element $T$ in $\Gamma$ 
such that $Z(T) = \langle T_0^n \,|\, n \in \Z \rangle$.
This means that
the centralizer $Z(T)$ of a hyperbolic element $T$ is conjugate to the cyclic group of infinite order
\[
\left\{\left.
\begin{pmatrix}
  e^{n \frac{\ell(\gamma_0)}{2}} & 0 \\
  0 & e^{-n \frac{\ell(\gamma_0)}{2}}
\end{pmatrix}
\in \PSLR
\, \right| \,
n \in \Z
\right\}
\]
where $\gamma_0$ is the closed geodesic corresponding to a prime element $T_0$.

\begin{remark}
The function $\Phi(t)$ defining $k(z, w)$ is recovered as
\[
\Phi(t) = \frac{-1}{\pi} \int_t^\infty \frac{dQ(v)}{\sqrt{v-1}}
\]
where
\[
Q(v)=g(u) \quad \text{for}\quad u = \log \frac{v+2 + \sqrt{v^2+4v}}{2}.
\]
\end{remark}

\subsection{The functional equation of the Selberg zeta function}
\label{subsec:review_functional_eqn}
We can also use the Selberg trace formula in Theorem~\ref{thm:trace_formula}
if an appropriate function $g(u)$ is given.
The purpose of this subsection is to
derive the functional equation of the Selberg trace formula
for a discrete subgroup with elliptic elements
along a way similar to that in~\cite[\S~1 -- 4 in Chap.~2]{Hejhal76:SelbergTraceFormulaVolI}.
We will focus on how elliptic elements affect to the functional equation of the Selberg
zeta function. For more details, we refer the reader to~\cite[Chap.~2]{Hejhal76:SelbergTraceFormulaVolI}.

Set a function $g(u)$ as
\[
g(u) = \frac{1}{2\alpha} e^{-\alpha|u|} - \frac{1}{2\beta}e^{-\beta|u|}
\]
for $u \in \R$ and $1/2 < \re(\alpha) < \re(\beta)$.
The Selberg transform $h(r)$ is expressed as
\[
h(r) = \frac{1}{r^2 + \alpha^2} - \frac{1}{r^2+\beta^2}.
\]
Then the Selberg trace formula~\eqref{eqn:Selberg_trace_formula} turns out to be  
\begin{align*}
  \sum_{n=0}^{\infty} \left( \frac{1}{r_n^2 + \alpha^2} - \frac{1}{r_n^2 + \beta^2} \right)
  &=
  \frac{\mu(\mathcal{D})}{4\pi}
  \int_{-\infty}^\infty
  r \left( \frac{1}{r^2 + \alpha^2} - \frac{1}{r^2 + \beta^2} \right) \tanh(\pi r)\,dr \\
  &\quad +
  \sum_{\{R\}:\mathrm{elliptic}} \frac{1}{2 m(R) \sin \theta(R)}
  \int_{-\infty}^\infty
  \frac{e^{-2\theta(R)r}}{1+e^{-2\pi r}}
  \left( \frac{1}{r^2 + \alpha^2} - \frac{1}{r^2 + \beta^2} \right)\,dr \\
  &\qquad +
  \frac{1}{2\alpha}
  \sum_{\{T\}: \mathrm{hyperbolic}}
  \frac{\log N(T_0)}{N(T)^{1/2} - N(T)^{-1/2}}\frac{1}{N(T)^\alpha} \\
  &\qquad -
  \frac{1}{2\beta}
  \sum_{\{T\}: \mathrm{hyperbolic}}
  \frac{\log N(T_0)}{N(T)^{1/2} - N(T)^{-1/2}}\frac{1}{N(T)^\beta} \\
\end{align*}

Here we replace the sums of terms for the conjugacy classes of hyperbolic elements $T$
with the Selberg zeta functions as follows (see~\cite[p.~66]{Hejhal76:SelbergTraceFormulaVolI} for the details):
\[
\sum_{\{T\}: \mathrm{hyperbolic}}
\frac{\log N(T_0)}{N(T)^{1/2} - N(T)^{-1/2}}\frac{1}{N(T)^\xi}
=
\frac{Z'(\xi+1/2)}{Z(\xi + 1/2)}
\]
Hence the Selberg trace formula~\eqref{eqn:Selberg_trace_formula} is expressed as  
\begin{align}
  &\sum_{n=0}^{\infty} \left( \frac{1}{r_n^2 + \alpha^2} - \frac{1}{r_n^2 + \beta^2} \right)
  \label{eqn:pre_functional_eqn}\\
  &=
  \frac{\mu(\mathcal{D})}{4\pi}
  \int_{-\infty}^\infty
  r \left( \frac{1}{r^2 + \alpha^2} - \frac{1}{r^2 + \beta^2} \right) \tanh(\pi r)\,dr \notag\\
  &\quad +
  \sum_{\{R\}:\mathrm{elliptic}} \frac{1}{2 m(R) \sin \theta(R)}
  \int_{-\infty}^\infty
  \frac{e^{-2\theta(R)r}}{1+e^{-2\pi r}}
  \left( \frac{1}{r^2 + \alpha^2} - \frac{1}{r^2 + \beta^2} \right)\,
  dr \notag\\
  &\quad
  +\frac{1}{2\alpha}
  \frac{Z'(\alpha+1/2)}{Z(\alpha + 1/2)}
  -\frac{1}{2\beta}
  \frac{Z'(\beta+1/2)}{Z(\beta + 1/2)} \notag.
\end{align}
Substituting $s-1/2$ into $\alpha$ in the above equality,
we can rewrite \eqref{eqn:pre_functional_eqn} as
\begin{align}
  &\frac{1}{2s-1}\frac{Z'(s)}{Z(s)} - \frac{1}{2\beta}\frac{Z'(\beta+1/2)}{Z(\beta + 1/2)}
  \label{eqn:pre_functional_eqn_II} \\
  &=
  \sum_{n=0}^{\infty} \left( \frac{1}{r^2 + (s-1/2)^2} - \frac{1}{r^2 + \beta^2} \right) \notag\\
  &\quad + \frac{\mu(\mathcal{D})}{4\pi}
  \int_{-\infty}^\infty
  r \left( \frac{1}{r^2 + \beta^2} - \frac{1}{r^2 + (s-1/2)^2} \right) \tanh(\pi r)\,dr \label{eqn:integral_identity}\\
  &\quad +
  \sum_{\{R\}:\mathrm{elliptic}} \frac{1}{2 m(R) \sin \theta(R)}
  \int_{-\infty}^\infty
  \frac{e^{-2\theta(R)r}}{1+e^{-2\pi r}}
  \left( \frac{1}{r^2 + \beta^2} - \frac{1}{r^2 + (s-1/2)^2} \right)\,dr \label{eqn:integral_elliptic}
\end{align}

The integrals of~\eqref{eqn:integral_identity} and~\eqref{eqn:integral_elliptic} can be carried out by the residue theorem,
for example, which follow from the integral 
along a rectangle with the vertices $X_1$, $X_1 + Y\sqrt{-1}$, $-X_2 + Y\sqrt{-1}$, $-X_2$
where $X_1$, $X_2 > 0$ and $Y \in \N$.
According to~\cite[pp.~69--71 and Proposition~4.9]{Hejhal76:SelbergTraceFormulaVolI},
the integral of~\eqref{eqn:integral_identity} is expressed as
\begin{align*}
\frac{\mu(\mathcal{D})}{4\pi}
  \int_{-\infty}^\infty
  r \left( \frac{1}{r^2 + \beta^2} - \frac{1}{r^2 + (s-1/2)^2} \right) \tanh(\pi r)\,dr
  &=
  \frac{\mu(\mathcal{D})}{2\pi}
  \sum_{k=0}^\infty \left(\frac{1}{\beta + 1/2 + k} - \frac{1}{s+k}\right).
\end{align*}
One can see that the integral of the third term~\eqref{eqn:integral_elliptic} turns into 
\begin{align}
  &\int_{-\infty}^\infty
  \frac{e^{-2\theta(R)r}}{1+e^{-2\pi r}}
  \left( \frac{1}{r^2 + \beta^2} - \frac{1}{r^2 + (s-1/2)^2} \right)\,dr \notag\\
  &=
  2\pi \sqrt{-1}
  \left(
  \sum_{k=0}^{\infty}
  \frac{1}{2\pi} e^{- \theta(R) (2k+1) \iu }
  \left(
  \frac{1}
       {
         \left( \frac{2k+1}{2} \iu \right)^2 + \beta^2
       }
  - \frac{1}
       {
         \left( \frac{2k+1}{2} \iu \right)^2 + (s-\frac{1}{2})^2
       }
  \right)
  \right)
  \label{eqn:residues_not_h}\\
  & \quad + 2\pi \sqrt{-1}
  \left(
   \frac{1}{2\sqrt{-1} \beta}\frac{e^{-2\theta(R)\iu \beta}}{1+e^{-2\pi \iu  \beta}}
  - \frac{1}{2\sqrt{-1} (s-1/2)}\frac{e^{-2\theta(R)(s-1/2) \iu}}{1+e^{-2\pi(s-1/2)\iu}}
  \right)
  \label{eqn:residues_h}
\end{align}
by calculating the residues of the integrand. 
The sums~\eqref{eqn:residues_not_h} and \eqref{eqn:residues_h} correspond to
the residues of $e^{-2\theta(R)z}/ (1+e^{-2\pi z})$ and $1/(z^2 + \beta^2) - 1/(z^2 + (s-1/2)^2)$
respectively.

If $\beta$ moves to $1/2 - s$, then the whole equality~\eqref{eqn:pre_functional_eqn_II} turns out to be 
\begin{align*}
  &\frac{1}{2s-1}\left(\frac{Z'(s)}{Z(s)} + \frac{Z'(1-s)}{Z(1-s)} \right)\\
  &=
  \frac{\mu(\mathcal{D})}{2\pi}
  \sum_{k=0}^\infty \left(\frac{1}{1 - s + k} - \frac{1}{s+k}\right) \\
  &\quad +
  \sum_{\{R\}:\mathrm{elliptic}} \frac{\pi}{2 m(R) \sin \theta(R)} 
  \left(
   \frac{1}{1/2 -s}\frac{e^{-2\theta(R)\iu (1/2 -s)}}{1+e^{-2\pi \iu (1/2 - s)}}
  - \frac{1}{s-1/2}\frac{e^{-2\theta(R)(s-1/2) \iu}}{1+e^{-2\pi(s-1/2)\iu}}
  \right).\\
\end{align*}
The first sum in the right hand side can be rewritten as
\[
  \sum_{k=0}^\infty \left(\frac{1}{1 - s + k} - \frac{1}{s+k}\right)
  = -\pi \cot (\pi s)\\
  = \pi \tan \pi(s-1/2).
\]
Therefore we have
\begin{align}
  \frac{1}{2s-1}\left(\frac{Z'(s)}{Z(s)} + \frac{Z'(1-s)}{Z(1-s)} \right)
  &=\frac{\mu(\mathcal{D})}{2} \tan\pi(s-1/2) \label{eqn:pre_functional_eqn_III}\\
  &\quad + \sum_{\{R\}:\mathrm{elliptic}}
  \frac{-\pi}{(2s-1)m(R) \sin \theta(R)} \frac{\cos((2\theta(R)-\pi)(s-1/2))}{\cos \pi(s-1/2)}.
  \notag
\end{align}
Finally we get the functional equation of $Z(s)$ from~\eqref{eqn:pre_functional_eqn_III} as follows.

\begin{theorem}[The functional equation of the Selberg zeta function]
  \label{thm:func_eq_Selberg_zeta}
  Let $Z(s)$ be the Selberg zeta function for a discrete subgroup $\Gamma$ without parabolic elements
  in $\PSLR$. Then $Z(s)$ satisfies the following equality:
\begin{align}
  \frac{d}{ds}\log\frac{Z(s)}{Z(1-s)}
  &=\mu(\mathcal{D})(s-1/2)\tan\pi(s-1/2) \label{eqn:functional_eqn}\\
  &\quad + \sum_{\{R\}:\mathrm{elliptic}}
  \frac{-\pi}{m(R)\sin\theta(R)}
  \frac{\cos((2\theta(R)-\pi)(s-1/2))}{\cos \pi(s-1/2)}\notag
\end{align}
where $\{R\}$ runs over the conjugacy classes of elliptic elements $R$ in $\Gamma$.
\end{theorem}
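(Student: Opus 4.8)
The plan is to obtain the functional equation directly from the identity~\eqref{eqn:pre_functional_eqn_III}, which is already established. First I would note that, by the chain rule and $\tfrac{d}{ds}\log Z(1-s) = -\tfrac{Z'(1-s)}{Z(1-s)}$,
\[
\frac{d}{ds}\log\frac{Z(s)}{Z(1-s)} = \frac{Z'(s)}{Z(s)} + \frac{Z'(1-s)}{Z(1-s)},
\]
so the left-hand side of the asserted identity~\eqref{eqn:functional_eqn} is exactly $(2s-1)$ times the left-hand side of~\eqref{eqn:pre_functional_eqn_III}. Multiplying~\eqref{eqn:pre_functional_eqn_III} through by $2s-1 = 2(s-1/2)$ then converts the first term on the right into $\mu(\mathcal{D})(s-1/2)\tan\pi(s-1/2)$ and cancels the factor $1/(2s-1)$ in every elliptic summand, leaving precisely $\sum_{\{R\}:\mathrm{elliptic}} \frac{-\pi}{m(R)\sin\theta(R)}\frac{\cos((2\theta(R)-\pi)(s-1/2))}{\cos\pi(s-1/2)}$. (The simplification $\sum_{k\ge 0}\bigl(\frac{1}{1-s+k}-\frac{1}{s+k}\bigr)=\pi\tan\pi(s-1/2)$ needed to pass from~\eqref{eqn:pre_functional_eqn_II} to~\eqref{eqn:pre_functional_eqn_III} is the standard partial-fraction expansion of $\cot$.)

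Second, I would record why this holds on all of $\C$. Equation~\eqref{eqn:pre_functional_eqn_III} was produced by specializing the Selberg trace formula (Theorem~\ref{thm:trace_formula}) to the test pair $g(u)=\tfrac1{2\alpha}e^{-\alpha|u|}-\tfrac1{2\beta}e^{-\beta|u|}$ with $1/2<\re(\alpha)<\re(\beta)$, evaluating the contour integrals~\eqref{eqn:integral_identity} and~\eqref{eqn:integral_elliptic} by the residue theorem, replacing the hyperbolic sums by $Z'/Z$, and then letting $\beta\to 1/2-s$; this is legitimate for $\re(s)$ in a suitable half-plane, where all the series converge absolutely and $Z(s)$ is holomorphic and nonvanishing. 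Since $Z(s)$ admits a meromorphic continuation to $\C$, both sides of~\eqref{eqn:functional_eqn} are meromorphic on $\C$, so the identity propagates everywhere by the identity theorem; matching of the poles on the two sides (the poles of $\tan$ and $\cos^{-1}$ against the zeros and poles of $Z(s)Z(1-s)$) serves only as a consistency check.

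The genuinely delicate point is not this final bookkeeping but the residue computations already carried out in the excerpt: one must check that the rectangular-contour argument of~\cite[pp.~69--71]{Hejhal76:SelbergTraceFormulaVolI} survives the presence of elliptic conjugacy classes, i.e.\ that the extra factor $e^{-2\theta(R)z}/(1+e^{-2\pi z})$ (with $0<\theta(R)<\pi$) decays along the horizontal sides of the rectangle as the height runs to $\infty$ through integers, and that its poles at $z=(2k+1)\iu/2$ are simple with the residues recorded in~\eqref{eqn:residues_not_h}, while the poles coming from $1/(z^2+\beta^2)-1/(z^2+(s-1/2)^2)$ produce the terms~\eqref{eqn:residues_h}. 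Granting the derivation through~\eqref{eqn:pre_functional_eqn_III}, the theorem is then immediate from the multiplication by $2s-1$ described above.
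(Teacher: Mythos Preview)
Your proposal is correct and follows exactly the paper's approach: the paper derives \eqref{eqn:pre_functional_eqn_III} from the trace formula and then states that the theorem follows from it, and you have simply made explicit the final step of multiplying through by $2s-1$ (together with the chain-rule identification of the left-hand side). Your additional remarks on meromorphic continuation and the contour estimates are supplementary commentary rather than a different argument.
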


\begin{remark}
  If $\Gamma$ has only hyperbolic elements, then the functional equation~\eqref{eqn:functional_eqn}
  can be written as
  \[ Z(s) = Z(1-s) \exp \left( \mu(\mathcal{D}) \int_0^{s-1/2} v \tan \pi v \,dv \right), \]
  which coincides with the usual functional equation of $Z(s)$.
  For example, we refer to~\cite[Equality~(3.4)]{Selberg56:HarmonicAnalysis}
  and~\cite[Theorem~4.2 in p.~73]{Hejhal76:SelbergTraceFormulaVolI}.
\end{remark}

\section{The higher-dimensional Reidemeister torsion and the dynamical zeta functions}
\label{sec:main_section}
We start with the construction of an $\SL[n]$-representation
of $\pi_1(\Gamma \backslash \PSLR)$
and discuss its properties
in Subsection~\ref{subsec:SLnRepSeifert}.
Subsection~\ref{subsec:Ruelle_SL2N} gives 
the absolute value at zero of the Ruelle zeta function explicitly.
We will show the equality between
the absolute value at zero of the Ruelle zeta function
and the Reidemeister torsion in Subsection~\ref{subsec:Ruelle_Rtorsion_eq}
and discuss the asymptotic behavior of the Reidemeister torsion through the Ruelle zeta function in Subsection~\ref{subsec:relation_asymptotic}.

\subsection{$\SL[n]$-representations for $\pi_1 (\Gamma \backslash \PSLR)$}
\label{subsec:SLnRepSeifert}
For $M = \Gamma \backslash \PSLR$, we will see that there exist some natural representations 
\[\rho_{2N} : \pi_1 M \xrightarrow{\rho} \SLR \xrightarrow{\sigma_{2N}} \SL[2N]\]
such that $\rho_{2N}(h) = - \I_{2N}$ for all $N \geq 1$ as follows.

We denote by $\TPSLR$ the universal cover of $\PSLR$.
The covering projection from $\TPSLR$ onto $\PSLR$ induces
the following exact sequence of groups:
\[
0 \to \Z \to \TPSLR \xrightarrow{p} \PSLR \to 1.
\]
This exact sequence is a central extension of $\PSLR$.
Let $\tilde\Gamma$ be the preimage $p^{-1}(\Gamma)$ of a discrete subgroup $\Gamma$.
We can regard $\tilde\Gamma$ as a subgroup of $\mathrm{Isom}(\TPSLR)$ since $\TPSLR \subset \mathrm{Isom}(\TPSLR)$.
Then $\widetilde{\Gamma}$ acts on $\TPSLR$ freely and 
the quotient $\tilde\Gamma \backslash \TPSLR$ is a Seifert fibered space.
It is known that $\tilde\Gamma \backslash \TPSLR$
turns into the unit tangent bundle $\Gamma \backslash \PSLR$ over $\Sigma = \Gamma \backslash \upperH$
(for the details, see~\cite[\S The geometry of $\TSLR$]{scott83:3manifolds}).
From the identification between $M = \Gamma \backslash \PSLR$ and $\tilde\Gamma \backslash \TPSLR$,
$\tilde\Gamma$ is the fundamental group $\pi_1(M)$ of $M=\Gamma \backslash \PSLR$.

Since the $2$--fold cover $\SLR$ over $\PSLR$ has the same universal cover $\TPSLR$, 
we can take an $\SLR$-representation $\rho$ of $\pi_1(M)$ as
the restriction of the projection from $\TPSLR$ onto $\SLR$ to $\tilde\Gamma = \pi_1(M)$.
The fundamental group $\tilde \Gamma$ is the central extension such that the following
commutative diagram holds:
\[
\xymatrix@R=10pt{
  0 \ar[r] & \Z \ar[r]\ar@{=}[d] & \tilde \Gamma \ar[r] \ar[d]& \Gamma \ar[r]\ar[d] & 1 \\
  0 \ar[r] & \Z \ar[r]           & \TPSLR \ar[r]        & \PSLR \ar[r]  & 1. \\
}
\]
According to~\cite[Proofs of Theorem~1 and 2]{JankinsNeumannHomomorphisms},
we can describe the image of the generators $h$ and $q_j$
in the presentation~\eqref{eqn:presentation_pi1_Seifert}
of $\tilde \Gamma = \pi_1(M)$ under the restriction of the projection from $\TPSLR$ onto $\SLR$
as in the next proposition.

\begin{proposition}
  \label{prop:natural_rep_M}
  Let the Seifert index of $M = \tilde \Gamma \backslash \TPSLR$ be 
  $(g; (1, 2g-2), (\alpha_j, \alpha_j-1), \ldots, (\alpha_m, \alpha_m-1))$
  and $h$ denote the homotopy class of a regular fiber in the Seifert fibration of $M$.
  If $\rho$ is an $\SLR$-representation of $\pi_1(M) = \tilde \Gamma$ given by
  the restriction of the projection from $\TPSLR$ onto $\SLR$ to $\tilde\Gamma$,
  then the images of $h$ and $q_j$ in~\eqref{eqn:presentation_pi1_Seifert}
  and a closed orbit $\gamma$ of the geodesic flow $\phi_t$ by $\rho$ satisfy that
  \begin{itemize}
  \item[]
    $\rho(h) = -\I$,
  \item[]
    $\rho(q_j)$  is conjugate to
    $\begin{pmatrix}
    \cos \frac{(1-\alpha_j) \pi}{\alpha_j} & -\sin \frac{(1-\alpha_j) \pi}{\alpha_j} \\
    \sin \frac{(1-\alpha_j) \pi}{\alpha_j} &  \cos \frac{(1-\alpha_j) \pi}{\alpha_j}
    \end{pmatrix}$,
  \item[]
    $\rho(\gamma)$
    is conjugate to
    $\begin{pmatrix}
    e^{\ell(\gamma)/2} & 0 \\
    0 &  e^{-\ell(\gamma)/2}
    \end{pmatrix}$
  \end{itemize}
  where $\ell(\gamma)$ is the period of the closed orbit $\gamma$.
\end{proposition}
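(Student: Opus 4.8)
The plan is to argue inside the universal cover $\TPSLR$, using the identifications established above: $\pi_1(M)=\tilde\Gamma=p^{-1}(\Gamma)$, and $\rho$ is the restriction to $\tilde\Gamma$ of the covering projection $q\co\TPSLR\to\SLR$. Since $\TPSLR$ is the common universal cover of $\SLR$ and $\PSLR$, both $q$ and $p$ are universal coverings and, as $p$ factors through $q$, we have $\ker q\subset\ker p$ with $\ker q\cong\pi_1(\SLR)$ the (unique) index-two subgroup of $\ker p\cong\pi_1(\PSLR)$, since $\SLR\to\PSLR$ is a connected double cover. Over a smooth point of $\Sigma$ the fiber of $S(T\Sigma)\to\Sigma$ is the image of $\SO$ in $\PSLR$, i.e.\ $\SO/\{\pm\I\}$, and by the Iwasawa decomposition $\PSLR$ deformation retracts onto this circle; hence $h$ is a generator of $\pi_1(\PSLR)=\ker p$. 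A generator of $\ker p$ does not lie in the index-two subgroup $\ker q$, so $\rho(h)=q(h)$ is the nontrivial element of $\ker(\SLR\to\PSLR)=\{\pm\I\}$, that is, $\rho(h)=-\I$.

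For $\gamma$, recall from Subsection~\ref{subsec:geodesic_flow_Ruelle} that the geodesic flow is the right translation by $a_t$, so a closed orbit of period $\ell(\gamma)$ based at $[g]$ satisfies $g\,a_{\ell(\gamma)}=\delta g$ for a hyperbolic $\delta\in\Gamma$. The contractible subgroup $\{a_t\}$ lifts to a one-parameter subgroup $\{\tilde a_t\}\subset\TPSLR$ that $q$ carries isomorphically onto $\{a_t\}\subset\SLR$. Lifting the orbit loop to $\TPSLR$ through a lift $\tilde g$ of $g$, its class in $\pi_1(M)=\tilde\Gamma$ is $\tilde g\,\tilde a_{\ell(\gamma)}\,\tilde g^{-1}$; applying $q$ shows that $\rho(\gamma)$ is conjugate in $\SLR$ to $a_{\ell(\gamma)}=\left(\begin{smallmatrix}e^{\ell(\gamma)/2}&0\\0&e^{-\ell(\gamma)/2}\end{smallmatrix}\right)$.

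For $q_j$, write $\bar q_j\in\Gamma$ for the elliptic element of order $\alpha_j$ of which $q_j\in\tilde\Gamma$ is a $p$-preimage, and normalize the orientation of the loop around the $j$-th cone point so that $\bar q_j$ is conjugate in $\PSLR$ to $[R_{\pi/\alpha_j}]$, where $R_\theta=\left(\begin{smallmatrix}\cos\theta&-\sin\theta\\\sin\theta&\cos\theta\end{smallmatrix}\right)$. Then $\rho(q_j)$ is one of the two $\SLR$-lifts $\pm R_{\pi/\alpha_j}$ of $\bar q_j$. The relation $q_j^{\alpha_j}h^{\alpha_j-1}=1$ of~\eqref{eqn:presentation_pi1_Seifert} together with $\rho(h)=-\I$ gives $\rho(q_j)^{\alpha_j}=(-\I)^{\alpha_j-1}$, and since $R_{\pi/\alpha_j}^{\alpha_j}=-\I$ whereas $(-R_{\pi/\alpha_j})^{\alpha_j}=(-\I)^{\alpha_j-1}$, this pins down $\rho(q_j)=-R_{\pi/\alpha_j}=R_{(1-\alpha_j)\pi/\alpha_j}$ whenever $\alpha_j$ is odd. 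When $\alpha_j$ is even both lifts obey the power relation, so for those $j$ --- and to justify the orientation normalization of $\bar q_j$ in the first place --- I would invoke the explicit description of the generators of $\tilde\Gamma$ inside $\TPSLR$ from~\cite[Proofs of Theorems~1 and~2]{JankinsNeumannHomomorphisms}, which exhibits $q_j$, rather than $q_jh$, as the preimage whose image in $\SLR$ is conjugate to $R_{(1-\alpha_j)\pi/\alpha_j}$. This last step is the main obstacle: selecting the correct preimage of $\bar q_j$ is invisible to the group relations alone and relies on the geometric bookkeeping carried out in~\cite{JankinsNeumannHomomorphisms}.
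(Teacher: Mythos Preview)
Your argument is correct and parallels the paper's: both treat $h$ via the index-two inclusion $\ker q\subset\ker p$, both handle $\gamma$ by lifting the one-parameter subgroup $\{a_t\}$ through $\TPSLR$, and both ultimately appeal to \cite{JankinsNeumannHomomorphisms} to pin down the lift of $q_j$.

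The one point worth noting is your treatment of $q_j$ versus the paper's. You project to $\bar q_j\in\PSLR$, enumerate the two $\SLR$-lifts $\pm R_{\pi/\alpha_j}$, and use the relation $q_j^{\alpha_j}h^{\alpha_j-1}=1$ to select between them, which forces a parity split on $\alpha_j$ and still leaves the initial orientation of $\bar q_j$ to be justified externally. The paper instead works with the rotation-number coordinate directly in $\TPSLR$: the preimage of the maximal compact is a copy of $\R$ in which the center $\Z$ sits, parametrized so that $m\in\Z$ projects to $(-\I)^m$ in $\SLR$ and $m/n\in\R$ projects to a rotation by $m\pi/n$. Taking $h$ to correspond to $1\in\Z$ (this is the convention imported from \cite{JankinsNeumannHomomorphisms}), the relation becomes the additive equation $\alpha_j r_j+(\alpha_j-1)=0$ in $\R$, giving $r_j=(1-\alpha_j)/\alpha_j$ and hence the rotation angle $(1-\alpha_j)\pi/\alpha_j$ with no case distinction. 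Both routes need the same external input, but the rotation-number viewpoint absorbs the sign ambiguity into a single real parameter rather than a discrete choice.
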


\begin{proof}
  Every regular fiber in the Seifert fibration of $M$
  gives a central element $h$ generating the center ($\simeq \Z$) of $\pi_1(M)$.
  It follows from the construction that
  the center $\langle h \rangle$ of $\pi_1(M)$ coincides with the center of $\TPSLR$.
  We can see that $\rho(h) = -\I$.

  The center $\Z$ of $\TPSLR$ is contained in the component $\R$ in $\TPSLR \simeq \upperH \times \R$
  (homeomorphically, note that $\TPSLR$ is not isometric to $\upperH \times \R$).
  The projection of $\TPSLR$ onto $\SLR$ sends the central element $m \in \Z$ of $\TPSLR$ 
  to $(-\I)^m$ in $\SLR$ and a rational number $m/n$ in $\R$ of $\TPSLR$ to
  an $\SLR$-matrix conjugate to the rotation matrix with angle $m\pi /n$.
  Together with the relation $q_j^{\alpha_j}h^{\alpha_j-1} = 1$, 
  we can also see that $\rho$ sends $q_j$ to an $\SLR$-matrix conjugate to 
  the rotation matrix with angle $(1-\alpha_j)\pi / \alpha_j$.
  
  Finally every closed orbit of the geodesic flow $\phi_t$ is induced by
  the action of
  the one parameter group:
  \[
  \left\{\left. \begin{pmatrix} e^{t/2} & 0 \\ 0 & e^{-t/2} \end{pmatrix} \,\right|\, t \in \R \right\}
  \]
  which gives
  the flow of the vector field generated by $\begin{pmatrix} 1/2 & 0 \\ 0 & -1/2 \end{pmatrix}$
  of $\sllR$ on $\PSLR$.
  There exist the lifts of 
  the vector field to the covering spaces $\TPSLR$ and $\SLR$.
  The image $\rho(\gamma)$ is given by the projection to the flow
  of the vector field generated by the same element of the Lie algebra 
  on $\SLR$.
  Hence the image $\rho(\gamma)$ is conjugate to a hyperbolic element
  with positive trace.
\end{proof}

We also review the irreducible $n$-dimensional representation $\sigma_n$ of $\SL$.
It is known that every irreducible $n$-dimensional representation over $\C$ is
equivalent to the following action of $\SL$ on the space of homogeneous polynomials of degree $n-1$.
Let $V_n$ be the vector space consisting of homogeneous polynomials with coefficient $\C$
of degree $n-1$ in $x$ and $y$, \ie
\[
V_n = \mathrm{span}_{\C} \langle x^{n-1}, x^{n-2}y, \ldots, xy^{n-2}, y^{n-1}\rangle.
\]
The Lie group $\SL$ acts on $V_n$ as
\[
\begin{pmatrix}
  a & b \\
  c & d
\end{pmatrix}
\cdot p(x, y) =
p(\begin{pmatrix}
  a & b \\
  c & d
\end{pmatrix}^{-1}
\begin{pmatrix}x \\ y \end{pmatrix})
= p(dx-by, -cx+ay)
\]
for
$\begin{pmatrix}
  a & b \\
  c & d
\end{pmatrix} \in \SL$.
This action defines an irreducible $\SL[n]$-representation of $\SL$
which is denoted by $\sigma_n$.

\begin{remark}
  \label{remark:sigma_n}
  We give several remarks on $\sigma_n$ needed later.
  \begin{itemize}
  \item
    The vector space $V_n$ is also referred to as the $(n-1)$-th {\it symmetric product} of $\C^2$
    with the standard action of $\SL$.
  \item
    If an $\SL$-matrix $A$ has the eigenvalues $a^{\pm 1}$,
    then $\sigma_n(A)$ has the eigenvalues $a^{n-1}, a^{n-3}, \ldots, a^{-(n-3)}, a^{-(n-1)}$
    (the exponents are decreasing by $2$).
    In the case of $n=2N$,
    the eigenvalues of $\sigma_{2N}(A)$ are given by 
    $\{a^{\pm (2N-1)}, a^{\pm (2N-3)}, \ldots, a^{\pm 1}\}$.
    \end{itemize}
\end{remark}

\begin{definition}[$2N$-dimensional representation of $\pi_1(\Gamma \backslash \PSLR)$]
  We define the $\SL[2N]$-representation $\rho_{2N}$ as the composition of $\sigma_{2N}$
  with the restriction of $\TPSLR \to \SLR$ to $\pi_1(\Gamma \backslash \PSLR)$, \ie
  \[ \rho_{2N} :
  \pi_1(\Gamma \backslash \PSLR) \xrightarrow{\rho} \SLR (\subset \SL)
  \xrightarrow{\sigma_{2N}} \SL[2N].\]
\end{definition}

\begin{proposition}
  \label{prop:rho_2N_h}
  The $\SL[2N]$-representation $\rho_{2N}$ sends $h$ to $-\I_{2N}$.
\end{proposition}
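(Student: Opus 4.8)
The plan is to reduce the statement to a one-line computation with the symmetric power representation $\sigma_{2N}$. By definition $\rho_{2N} = \sigma_{2N} \circ \rho$, and Proposition~\ref{prop:natural_rep_M} already tells us that $\rho(h) = -\I$, where $-\I$ here means $-\I_2 \in \SLR \subset \SL$. So the whole proposition comes down to showing $\sigma_{2N}(-\I_2) = -\I_{2N}$.

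For that I would use the explicit model of $\sigma_{2N}$ on the space $V_{2N}$ of homogeneous polynomials of degree $2N-1$ in $x, y$ described just before the definition of $\rho_{2N}$: a matrix $\left(\begin{smallmatrix} a & b \\ c & d \end{smallmatrix}\right)$ sends $p(x,y)$ to $p(dx - by, -cx + ay)$. Setting $a = d = -1$ and $b = c = 0$, this is $p(x,y) \mapsto p(-x,-y)$, and since every element of the basis $\langle x^{2N-1}, x^{2N-2}y, \ldots, y^{2N-1} \rangle$ is homogeneous of odd degree $2N-1$, we get $p(-x,-y) = (-1)^{2N-1} p(x,y) = -p(x,y)$. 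Hence $\sigma_{2N}(-\I_2)$ acts as $-\mathrm{id}$ on $V_{2N}$, i.e.\ $\sigma_{2N}(-\I_2) = -\I_{2N}$, and therefore $\rho_{2N}(h) = \sigma_{2N}(\rho(h)) = -\I_{2N}$.

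As an alternative to the polynomial computation, I could instead invoke the eigenvalue description in the second bullet of Remark~\ref{remark:sigma_n}: $-\I_2$ has the single eigenvalue $-1$ (i.e.\ $a = -1$ in the notation $a^{\pm 1}$), so the eigenvalues of $\sigma_{2N}(-\I_2)$ are $(-1)^{2k-1}$ for $1 \le k \le N$, all equal to $-1$; since $-\I_2$ is central in $\SL$ and $\sigma_{2N}$ is irreducible, $\sigma_{2N}(-\I_2)$ is a scalar matrix by Schur's lemma, hence equal to $-\I_{2N}$. I do not expect any genuine obstacle here; the only point to watch is the parity of the dimension — the argument crucially uses that the degree $2N-1$ is odd, which is precisely why the conclusion $\rho_{2N}(h) = -\I_{2N}$ holds for the even-dimensional representations $\rho_{2N}$ and would instead give $+\I$ for an odd-dimensional symmetric power.
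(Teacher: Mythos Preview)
Your proof is correct and follows essentially the same approach as the paper: use Proposition~\ref{prop:natural_rep_M} for $\rho(h)=-\I$ and then compute $\sigma_{2N}(-\I)=-\I_{2N}$. The paper simply cites Remark~\ref{remark:sigma_n} for the latter, whereas you spell it out via the explicit polynomial action (and, in your alternative, add the Schur's lemma step to pass from ``all eigenvalues $-1$'' to ``scalar $-\I_{2N}$''), which is a harmless elaboration of the same idea.
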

\begin{proof}
  By Proposition~\ref{prop:natural_rep_M} and Remark~\ref{remark:sigma_n},
  $\rho$ sends $h$ to $-\I$ and
  the irreducible representation $\sigma_{2N}$ of $\SL$ sends $-\I$ to $-\I_{2N}$.
  Hence the $\SL[2N]$-representation $\rho_{2N} = \sigma_{2N} \circ \rho$ also sends $h$ to $-\I_{2N}$.
\end{proof}

It follows from Proposition~\ref{prop:rho_2N_h} and \cite[Proposition~4.1]{Yamaguchi:asymptoticsRtorsion}
that our representations $\rho_{2N}$ define the higher-dimensional Reidemeister torsion of $M = \Gamma \backslash \PSLR$ discussed in~\cite{Yamaguchi:asymptoticsRtorsion}.
For the details on the higher-dimensional Reidemeister torsion, see~\cite{Yamaguchi:asymptoticsRtorsion}.
\begin{corollary}
  Each representation $\rho_{2N}$ of $\pi_1(M)$ defines
  an acyclic local system of $M$ with the coefficient $V_{2N}$
  and the higher-dimensional Reidemeister torsion $\Tor{M}{\rho_{2N}}$ of $M$.
\end{corollary}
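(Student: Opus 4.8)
The plan is to deduce the statement from the acyclicity criterion recorded in~\cite[Proposition~4.1]{Yamaguchi:asymptoticsRtorsion}, whose hypothesis is precisely that a $2N$-dimensional representation of the fundamental group of a Seifert fibered space sends the regular fiber class $h$ to $-\I_{2N}$; Proposition~\ref{prop:rho_2N_h} supplies exactly this, so formally the proof is a one-line citation. To keep the paper self-contained, however, I would also sketch the underlying mechanism, namely a direct verification that the local system $V_{2N}$ twisted by $\rho_{2N}$ is acyclic, followed by the standard remark that acyclicity is what makes the higher-dimensional Reidemeister torsion well-defined.

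For acyclicity, the first step is to observe that $M = \tilde\Gamma \backslash \TPSLR$ is aspherical: $\TPSLR$ is homeomorphic to $\R^3$ and $\tilde\Gamma = \pi_1(M)$ acts on it freely, so $M$ is a $K(\pi_1(M),1)$ and $H^*(M; V_{2N}) \cong H^*(\pi_1(M); V_{2N})$ with coefficients in the $\pi_1(M)$-module $V_{2N}$ determined by $\rho_{2N}$. The second step uses the central extension $1 \to \langle h\rangle \to \pi_1(M) \to \Gamma \to 1$ with $\langle h\rangle \cong \Z$, together with the Lyndon--Hochschild--Serre spectral sequence $E_2^{p,q} = H^p(\Gamma; H^q(\langle h\rangle; V_{2N})) \Rightarrow H^{p+q}(\pi_1(M); V_{2N})$. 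Since $h$ acts on $V_{2N}$ by $-\I_{2N}$, the operator $\rho_{2N}(h) - \I_{2N} = -2\,\I_{2N}$ is invertible over $\C$, so $H^0(\langle h\rangle; V_{2N}) = \ker(\rho_{2N}(h) - \I_{2N}) = 0$ and $H^1(\langle h\rangle; V_{2N}) = \mathrm{coker}(\rho_{2N}(h) - \I_{2N}) = 0$, while $H^q(\langle h\rangle; V_{2N}) = 0$ for $q \geq 2$ because $\Z$ has cohomological dimension one. Hence the $E_2$-page vanishes identically, $H^*(\pi_1(M); V_{2N}) = 0$, and the local system is acyclic.

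The third step is to recall the general fact that the twisted cellular chain complex $C_*(\tilde M) \otimes_{\Z[\pi_1 M]} V_{2N}$ associated to any finite CW structure on $M$ is then an acyclic complex of based finitely generated free $\C$-modules, so it has a well-defined torsion in $\C^\times$, independent of the CW structure up to the usual sign ambiguity and fixed by the conventions of~\cite{Yamaguchi:asymptoticsRtorsion}; this is the invariant $\Tor{M}{\rho_{2N}}$. I do not expect a genuine obstacle here, since the argument is short; the only points needing care are matching the hypotheses of~\cite[Proposition~4.1]{Yamaguchi:asymptoticsRtorsion} exactly (that it is stated for Seifert manifolds of the type $\Gamma\backslash\PSLR$ and for $2N$-dimensional representations with $\rho_{2N}(h) = -\I_{2N}$) and being consistent with that paper's sign and basis conventions so that the symbol $\Tor{M}{\rho_{2N}}$ is unambiguous when it is compared with $|R_{\rho_{2N}}(0)|$ in the later subsections.
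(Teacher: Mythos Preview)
Your proposal is correct and takes essentially the same approach as the paper: the paper's proof is precisely the one-line citation of Proposition~\ref{prop:rho_2N_h} together with \cite[Proposition~4.1]{Yamaguchi:asymptoticsRtorsion}, stated in the sentence immediately preceding the Corollary. Your additional Lyndon--Hochschild--Serre sketch is a correct elaboration of the underlying mechanism, but the paper itself omits it and simply refers the reader to \cite{Yamaguchi:asymptoticsRtorsion}.
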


\subsection{The Ruelle zeta function for $\SL[2N]$-representations}
\label{subsec:Ruelle_SL2N}
We consider the Ruelle zeta function with a non-unitary representation $\rho_{2N}$
of $\pi_1(\Gamma \backslash \PSLR)$ for a discrete subgroup $\Gamma$ in $\PSLR$.
The Ruelle zeta function with the $2N$-dimensional representation $\rho_{2N}$ is also
expressed as the ratio of the Selberg zeta function.
Moreover, the value of our Ruelle zeta function at zero has an integral expression
given by the functional equation of the Selberg zeta function.
We will observe the resulting integral expression in detail.

Here and subsequently,
we often regard the unit tangent bundle $\Gamma \backslash \PSLR$
over a hyperbolic orbifold $\Sigma = \Gamma \backslash \upperH$
as $\tilde \Gamma \backslash \TPSLR$ where $\tilde \Gamma = p^{-1}(\Gamma)$
by the projection $p: \TPSLR \to \PSLR$.

\begin{proposition}
  \label{prop:Ruelle_Selberg_at_s}
  Suppose that $M$ is $\Gamma \backslash \PSLR (= \tilde \Gamma \backslash \TPSLR)$ and
  $\rho$ denotes the $\SLR$-representation of $\pi_1(M) (= \tilde \Gamma)$ given by
  the restriction of the projection from $\TPSLR$ onto $\SLR$ to $\tilde\Gamma$.
  Then the Ruelle zeta function for $M$ with
  the non-unitary representation $\rho_{2N} = \sigma_{2N} \circ \rho$ satisfies 
  \[R_{\rho_{2N}}(s) = \frac{Z(s-N+1/2)}{Z(s+N+1/2)}.\]
  Moreover,
  when we set $\eta(s)=Z(s) / Z(1-s)$, the value of Ruelle zeta function at zero is expressed as
  \begin{align*}
    R_{\rho_{2N}}(0)
    &= \frac{Z(-N+1/2)}{Z(N+1/2)} \\
    &= \eta(N+1/2)^{-1}.
  \end{align*}
\end{proposition}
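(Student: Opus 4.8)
The plan is to compute the Ruelle zeta function $R_{\rho_{2N}}(s)$ directly from its definition as a product over prime closed orbits, using the structure of $\rho_{2N}$ on such orbits established in Proposition~\ref{prop:natural_rep_M} and Remark~\ref{remark:sigma_n}. First I would recall that, by the correspondence reviewed in Subsection~\ref{subsec:geodesic_flow_Ruelle}, the prime closed orbits $\gamma$ of the geodesic flow $\phi_t$ on $M = \Gamma\backslash\PSLR$ are in bijection with conjugacy classes $\{T\}$ of prime hyperbolic elements of $\Gamma$, with $\ell(\gamma)$ the translation length, so $N(T) = e^{\ell(\gamma)}$. By Proposition~\ref{prop:natural_rep_M}, for such an orbit $\gamma$ the matrix $\rho(\gamma) \in \SLR$ is conjugate to $\mathrm{diag}(e^{\ell(\gamma)/2}, e^{-\ell(\gamma)/2})$, i.e.\ it has eigenvalues $a^{\pm 1}$ with $a = e^{\ell(\gamma)/2}$, hence $a^2 = N(T)$.

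Next I would apply Remark~\ref{remark:sigma_n}: since $\rho(\gamma)$ has eigenvalues $a^{\pm 1}$, the matrix $\rho_{2N}(\gamma) = \sigma_{2N}(\rho(\gamma))$ has eigenvalues $\{a^{\pm(2N-1)}, a^{\pm(2N-3)}, \ldots, a^{\pm 1}\}$. Therefore
\[
\det\bigl(\I - \rho_{2N}(\gamma)e^{-s\ell(\gamma)}\bigr)
= \prod_{k=1}^{N} \bigl(1 - a^{2k-1}e^{-s\ell(\gamma)}\bigr)\bigl(1 - a^{-(2k-1)}e^{-s\ell(\gamma)}\bigr).
\]
Using $a = e^{\ell(\gamma)/2}$, i.e.\ $a^{2k-1}e^{-s\ell(\gamma)} = e^{-(s - (2k-1)/2)\ell(\gamma)} = N(T)^{-(s - k + 1/2)}$ and similarly $a^{-(2k-1)}e^{-s\ell(\gamma)} = N(T)^{-(s + k - 1/2)}$, each local factor becomes $\prod_{k=1}^{N}(1 - N(T)^{-(s-k+1/2)})(1 - N(T)^{-(s+k-1/2)})$. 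Taking the product over all prime $\{T\}$ and comparing with the definition~\eqref{eqn:def_SelbergZeta} of $Z(s)$ — which groups the factors $(1 - N(T)^{-\xi - \ell})$ over $\ell \geq 0$ — I would recognize a telescoping in the index: the factors with exponent shift $-(s+k-1/2)$ for $k = 1, \ldots, N$ are exactly the factors of $Z(s+1/2)/Z(s+N+1/2)$ (they fill the gap between $\ell = 0$ and $\ell = N-1$ relative to the base point $s+1/2$, up to reindexing), while those with $-(s-k+1/2)$ for $k=1,\ldots,N$ contribute $Z(s-N+1/2)/Z(s+1/2)$. Multiplying, the $Z(s+1/2)$ terms cancel and one obtains $R_{\rho_{2N}}(s) = Z(s-N+1/2)/Z(s+N+1/2)$. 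The main obstacle is bookkeeping this telescoping carefully: one must track which product $\prod_{k=0}^\infty$ inside the definition of $Z$ absorbs each family of factors and verify the index shifts align exactly, and one should note that convergence is only guaranteed for $\re s \gg 0$, with the identity on all of $\C$ following by meromorphic continuation of $Z$.

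Finally, the evaluation at $s = 0$ is immediate: $R_{\rho_{2N}}(0) = Z(-N+1/2)/Z(N+1/2)$. Writing $\eta(s) = Z(s)/Z(1-s)$ and substituting $s = N + 1/2$ gives $\eta(N+1/2) = Z(N+1/2)/Z(1 - N - 1/2) = Z(N+1/2)/Z(-N+1/2)$, which is exactly the reciprocal of $R_{\rho_{2N}}(0)$; hence $R_{\rho_{2N}}(0) = \eta(N+1/2)^{-1}$, as claimed.
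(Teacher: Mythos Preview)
Your proposal is correct and follows essentially the same route as the paper: both compute the eigenvalues of $\rho_{2N}(\gamma)$ via Proposition~\ref{prop:natural_rep_M} and Remark~\ref{remark:sigma_n}, then telescope against the Selberg zeta function. The only cosmetic difference is that the paper first packages each pair of factors as the classical Ruelle zeta function, writing $R_{\rho_{2N}}(s) = \prod_{k=1}^N R\bigl(s-\tfrac{2k-1}{2}\bigr) R\bigl(s+\tfrac{2k-1}{2}\bigr)$ and then invoking $R(\xi)=Z(\xi)/Z(\xi+1)$ to collapse the product, whereas you telescope directly against the $\prod_{k\ge 0}$ in the definition of $Z$; these are equivalent bookkeeping choices, and your remark about meromorphic continuation is a point the paper leaves implicit.
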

\begin{proof}
  By Proposition~\ref{prop:natural_rep_M} and Remark~\ref{remark:sigma_n},
  the Ruelle zeta function $R_{\rho_{2N}}(s)$ is expressed as
  \begin{align*}
    R_{\rho_{2N}}(s)
    &= \prod_{\gamma:\textrm{prime}} \det(\I - \rho_{2N}(\gamma) e^{-s\ell(\gamma)}) \\
    &= \prod_{\gamma:\textrm{prime}} \prod_{k=1}^N (1-e^{-(s-(2k-1)/2)\ell(\gamma)})(1-e^{-(s+(2k-1)/2)\ell(\gamma)})\\
    &= \prod_{k=1}^N \prod_{\gamma:\textrm{prime}}  (1-e^{-(s-(2k-1)/2)\ell(\gamma)})(1-e^{-(s+(2k-1)/2)\ell(\gamma)})\\
    &= \prod_{k=1}^N R\left(s-\frac{2k-1}{2}\right) R\left(s+\frac{2k-1}{2}\right) \\
    &= \frac{Z(s-1/2)}{Z(s+1/2)} \cdots \frac{Z(s-(2N-1)/2)}{Z(s-(2N-3)/2)}
       \frac{Z(s+1/2)}{Z(s+3/2)} \cdots \frac{Z(s+(2N-1)/2)}{Z(s+(2N+1)/2)}\\
    &= \frac{Z(s-N+1/2)}{Z(s+N+1/2)}.
  \end{align*}
  The second equality from the last follows from the classical relation~\eqref{eqn:classical_Ruelle_Selberg}
  between the Ruelle and Selberg zeta functions.
\end{proof}

We will examine the absolute value at zero of the Ruelle zeta function $R_{\rho_{2N}}(s)$
more closely.
By calculating the integral expression of $\eta(N + 1/2) = R_{\rho_{2N}}(0)^{-1}$,
we have the explicit formula of $|R_{\rho_{2N}}(0)|^{-1}$.
\begin{theorem}
  \label{thm:Ruelle_zero}
  Under the same assumptions as Proposition~\ref{prop:Ruelle_Selberg_at_s}, 
  the absolute value at zero of the Ruelle zeta function is expressed as follows.
  \begin{align}
    |R_{\rho_{2N}}(0)|^{-1}
    &= |\eta(N+1/2)| \notag \\
    &= \exp \left[ \mu(\mathcal{D})  \left( -\frac{N}{\pi} \log 2 \right) \right] \notag \\
    &\qquad \cdot
    \exp \left[
      \sum_{j=1}^m \left(
      \log \prod_{k=1}^N (1 - e^{(2k-1)\theta(q_j)\iu}) (1 - e^{-(2k-1)\theta(q_j)\iu})
      - \frac{2N}{m(q_j)}\log2
      \right)
      \right]
    \label{eqn:Ruelle_zero_elliptic}
  \end{align}
\end{theorem}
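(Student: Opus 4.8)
The plan is to deduce the statement from Proposition~\ref{prop:Ruelle_Selberg_at_s} together with the functional equation of Theorem~\ref{thm:func_eq_Selberg_zeta}, and then to evaluate the two integrals that result. By Proposition~\ref{prop:Ruelle_Selberg_at_s} one has $R_{\rho_{2N}}(0)^{-1}=\eta(N+1/2)$ with $\eta(s)=Z(s)/Z(1-s)$, and $\eta(1/2)=1$; moreover $Z(s)$ has real Taylor coefficients, so $\eta$ is real on the real axis where it is finite and nonzero. On the segment $[1/2,N+1/2]$ the right-hand side of~\eqref{eqn:functional_eqn} has only simple poles, located at the integers $s=1,\dots,N$; since the meromorphic function $\eta$ has a well-defined order at each of these points, upon excising symmetric $\delta$-neighbourhoods the boundary contributions $\log|\eta(1+k-\delta)|-\log|\eta(1+k+\delta)|$ tend to $0$. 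Integrating~\eqref{eqn:functional_eqn} from $1/2$ to $N+1/2$ in the principal-value sense and substituting $\xi=s-1/2$ therefore gives
\[
  \log|R_{\rho_{2N}}(0)|^{-1}=\log|\eta(N+1/2)|
  =\mu(\mathcal{D})\,\mathrm{p.v.}\!\int_{0}^{N}\xi\tan\pi\xi\,d\xi
  +\sum_{\{R\}:\mathrm{elliptic}}\frac{-\pi}{m(R)\sin\theta(R)}\,\mathrm{p.v.}\!\int_{0}^{N}\frac{\cos((2\theta(R)-\pi)\xi)}{\cos\pi\xi}\,d\xi,
\]
which is the integral expression recorded in Proposition~\ref{prop:eta_integral}. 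It then remains to compute the two contributions.

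For the identity contribution I would integrate by parts using $\frac{d}{d\xi}\bigl(-\frac{1}{\pi}\log|\cos\pi\xi|\bigr)=\tan\pi\xi$: the boundary terms at $\xi=0$ and $\xi=N$ vanish because $|\cos\pi\xi|=1$ there, while the terms produced at the excised half-integers are $O(\delta\log\delta)$ and cancel as $\delta\to0$. Hence $\mathrm{p.v.}\int_0^N\xi\tan\pi\xi\,d\xi=\frac{1}{\pi}\int_0^N\log|\cos\pi\xi|\,d\xi$, and by the $1$-periodicity of $|\cos\pi\xi|$ and the classical identity $\int_0^1\log|\cos\pi\xi|\,d\xi=-\log2$ this equals $-\frac{N}{\pi}\log2$. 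Multiplying by $\mu(\mathcal{D})$ produces the first exponential factor $\exp[\mu(\mathcal{D})(-\frac{N}{\pi}\log2)]$ in~\eqref{eqn:Ruelle_zero_elliptic}.

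For the elliptic contributions I would first write $\frac{\cos((2\theta-\pi)\xi)}{\cos\pi\xi}=\cos2\theta\xi+\sin2\theta\xi\,\tan\pi\xi$, so that $\mathrm{p.v.}\int_0^N\frac{\cos((2\theta-\pi)\xi)}{\cos\pi\xi}\,d\xi=\frac{\sin2\theta N}{2\theta}+\mathrm{p.v.}\int_0^N\sin2\theta\xi\,\tan\pi\xi\,d\xi$; integrating the last integral by parts as above reduces it to $\frac{2\theta}{\pi}\int_0^N\cos2\theta\xi\,\log|\cos\pi\xi|\,d\xi$, which I would evaluate using the Fourier expansion $\log|\cos\pi\xi|=-\log2+\sum_{n\ge1}\frac{(-1)^{n+1}}{n}\cos2\pi n\xi$, the integrality of $N$ (so that $\sin(2\theta\pm2\pi n)N=\sin2\theta N$), and the partial-fraction expansion of $\pi\csc$ (equivalently $\pi\cot$) to sum the resulting series. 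Next I would use that the elliptic conjugacy classes of $\Gamma$ are exactly the classes of the nontrivial powers $q_j^{\,l}$ ($1\le l\le\alpha_j-1$) of the cone-point generators $q_j$, with $\theta(q_j^{\,l})=l\,\theta(q_j)$ and $m(q_j^{\,l})=m(q_j)$, and sum the evaluated integrals over $l$ for each fixed $j$. Recognising the finite trigonometric product by means of $(1-e^{(2k-1)\theta\iu})(1-e^{-(2k-1)\theta\iu})=2-2\cos((2k-1)\theta)$ together with the evaluation of $\prod_{k=1}^{N}\sin\frac{(2k-1)\pi}{2\alpha_j}$, the $j$-th cone point is seen to contribute $\log\prod_{k=1}^N(1-e^{(2k-1)\theta(q_j)\iu})(1-e^{-(2k-1)\theta(q_j)\iu})-\frac{2N}{m(q_j)}\log2$; combining this with the identity term yields~\eqref{eqn:Ruelle_zero_elliptic}.

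The step I expect to be the main obstacle is the elliptic one. On the analytic side one must justify that the principal-value integrals are the correct objects — equivalently, that~\eqref{eqn:functional_eqn} may be integrated across its poles with the excised pieces cancelling — and on the computational side, the bookkeeping needed to collapse the sum over all elliptic conjugacy classes (over all powers $q_j^{\,l}$) into the clean per-cone-point expression in which only $\theta(q_j)$ and $m(q_j)$ survive is the most delicate part. By comparison the identity term and the reduction to $\eta(N+1/2)$ are immediate.
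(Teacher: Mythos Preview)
Your reduction to the integral expression and your treatment of the identity contribution are exactly the paper's: the paper also integrates~\eqref{eqn:functional_eqn} from $1/2$ to $N+1/2$ (Proposition~\ref{prop:eta_integral}), and computes the identity term by the same integration by parts and principal-value evaluation $\mathrm{p.v.}\int_0^1\log|\cos\pi\xi|\,d\xi=-\log 2$ (Proposition~\ref{prop:contribution_identity}).

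For the elliptic contribution you take a genuinely different route. The paper (Lemmas~\ref{lemma:int_1st} and~\ref{lemma:int_jth}) first sums over the powers $R_0^p$ ($1\le p\le m_0-1$) of a fixed primitive elliptic element \emph{before} integrating, substitutes $z=e^{\pi\xi\iu/m_0}$ so that the summed integrand becomes a rational function $P(z)/Q(z)$ with $Q(z)=z^{2m_0}+1$, and then does a partial-fraction decomposition over the roots $\{e^{(2q-1)\pi\iu/(2m_0)}\}$. Crucially, the paper integrates over one unit interval $[k-1,k]$ at a time: each such interval contributes exactly $2\log|1-e^{-(2k-1)\pi\iu/m_0}|-\frac{2}{m_0}\log2$, so the product $\prod_{k=1}^N$ in~\eqref{eqn:Ruelle_zero_elliptic} appears term by term, with no resummation needed. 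Your route --- keep a single conjugacy class, use $\cos((2\theta-\pi)\xi)/\cos\pi\xi=\cos2\theta\xi+\sin2\theta\xi\tan\pi\xi$, integrate by parts, expand $\log|\cos\pi\xi|$ in its Fourier series, sum the resulting series via the partial-fraction expansion of $\pi\csc$, and only then sum over the powers $q_j^{\,l}$ --- is also valid, and has the merit of staying on the real line throughout; but it buys that at the cost of the double bookkeeping you flag: you must first collapse the infinite Fourier sum and then the finite sum over $l$, and finally recognise a product over $k$ that is not a priori visible. The paper's per-interval decomposition trades the Fourier analysis for an algebraic identity among roots of unity and produces the $k$-product for free.
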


To prove Theorem~\ref{thm:Ruelle_zero} we start with the integral expression of $\eta(s)$.
By $\eta(1/2)=1$ and the functional equation~\eqref{eqn:functional_eqn}
of the Selberg zeta function, we can express $\eta(s)$ as follows.
\begin{align*}
  \eta(s)
  &=\exp\left( \int_{1/2}^s \frac{d}{dz} \log \eta(z)\,dz \right)\\
  &=\exp\left(\mu(\mathcal{D})\int_{1/2}^s (z-1/2)\tan \pi(z-1/2)\,dz \right.\\
  &\quad \left. +
  \sum_{\{R\}:\mathrm{elliptic}}
  \int_{1/2}^s \frac{-\pi}{m(R)\sin\theta(R)}
  \frac{\cos((2\theta(R)-\pi)(z-1/2))}{\cos \pi(z-1/2)}\, dz \right), \\
  \intertext{put $\xi=z-1/2$,}
  &=\exp\left(\mu(\mathcal{D})\int_{0}^{s-1/2} \xi \tan \pi \xi\,d\xi \right.\\
  &\quad \left. +
  \sum_{\{R\}:\mathrm{elliptic}}
  \int_{0}^{s-1/2} \frac{-\pi}{m(R)\sin\theta(R)}
  \frac{\cos((2\theta(R)-\pi)\xi)}{\cos \pi\xi}\, d\xi \right). \\
\end{align*}
Summarizing, we have the following integral expression of $\eta(N+1/2) = R_{\rho_{2N}}^{-1}(0)$.
\begin{proposition}
  \label{prop:eta_integral}
  \begin{align*}
    R^{-1}_{\rho_{2N}}(0) &= \eta(N + 1/2)\\
    &=
    \exp\left(
    \mu(\mathcal{D})\int_{0}^{N} \xi \tan \pi \xi\,d\xi
    + \sum_{\{R\}:\mathrm{elliptic}}
    \int_{0}^{N} \frac{-\pi}{m(R)\sin\theta(R)}
    \frac{\cos((2\theta(R)-\pi)\xi)}{\cos \pi\xi}\, d\xi \right).
  \end{align*}
\end{proposition}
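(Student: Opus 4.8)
The plan is to read the formula off the functional equation of the Selberg zeta function by integrating a logarithmic derivative from $1/2$ to $N+1/2$. First I would invoke Proposition~\ref{prop:Ruelle_Selberg_at_s}, which identifies $R_{\rho_{2N}}(0)^{-1}$ with $\eta(N+1/2)$ where $\eta(s)=Z(s)/Z(1-s)$, so the task reduces to an explicit evaluation of $\eta(N+1/2)$. Two ingredients are then available at once: the normalization $\eta(1/2)=Z(1/2)/Z(1/2)=1$, and the closed form for $\tfrac{d}{ds}\log\eta(s)=\tfrac{d}{ds}\log\tfrac{Z(s)}{Z(1-s)}$ supplied by the functional equation in Theorem~\ref{thm:func_eq_Selberg_zeta}.

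Since $\eta$ is meromorphic with $\eta(1/2)=1$, I would recover $\eta$ from its logarithmic derivative as $\eta(s)=\exp\!\left(\int_{1/2}^{s}\tfrac{d}{dz}\log\eta(z)\,dz\right)$: the only ambiguity in $\log\eta$ along the path is an integer multiple of $2\pi\iu$ coming from the zeros or poles of $\eta$ encircled, and it is annihilated by $\exp$. Substituting the right-hand side of~\eqref{eqn:functional_eqn} for $\tfrac{d}{dz}\log\eta(z)$, making the change of variable $\xi=z-1/2$, and then specializing $s=N+1/2$ so that the upper limit becomes $N$, turns the exponent into $\mu(\mathcal{D})\int_{0}^{N}\xi\tan\pi\xi\,d\xi+\sum_{\{R\}:\mathrm{elliptic}}\int_{0}^{N}\frac{-\pi}{m(R)\sin\theta(R)}\frac{\cos((2\theta(R)-\pi)\xi)}{\cos\pi\xi}\,d\xi$, which is exactly the asserted expression for $\eta(N+1/2)=R_{\rho_{2N}}(0)^{-1}$.

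The delicate point — and the one I would spell out most carefully — is that the integrand has poles at the half-integers $\xi=1/2,3/2,\ldots,N-1/2$, arising from $\tan\pi\xi$ in the identity term and from $1/\cos\pi\xi$ in the elliptic terms, and these all lie strictly inside the interval of integration; the singularities are non-integrable, so ``$\int_{0}^{N}$'' must be read as an integral along a contour from $0$ to $N$ avoiding the half-integers (equivalently, as a Cauchy principal value). I would justify this by noting that $\eta$ is a single-valued meromorphic function, so the total residue of $\tfrac{d}{dz}\log\eta$ at each of these points is an integer; hence different admissible contours give the same value of $\exp$, while the principal value can differ from it only by a sign, so $|\eta(N+1/2)|=|R_{\rho_{2N}}(0)|^{-1}$ is unambiguous — which is all that the downstream results (Theorem~\ref{thm:Ruelle_zero} and the asymptotics) require. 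The genuinely computational part, evaluating the two integrals in closed form along such a contour, is then deferred to the proof of Theorem~\ref{thm:Ruelle_zero}: the identity integral is handled by the residue computation recalled after~\eqref{eqn:integral_identity}, and the elliptic integrals are elementary once the contour has been fixed.
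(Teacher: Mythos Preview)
Your argument is exactly the paper's: use $\eta(1/2)=1$, plug the right-hand side of the functional equation~\eqref{eqn:functional_eqn} into $\eta(s)=\exp\bigl(\int_{1/2}^{s}\tfrac{d}{dz}\log\eta(z)\,dz\bigr)$, substitute $\xi=z-1/2$, and set $s=N+1/2$. Your extra paragraph on the half-integer poles and the contour/principal-value reading is more explicit than the paper, which silently defers this to the later computation (where ``$\mathrm{p.v.}$'' appears in Proposition~\ref{prop:contribution_identity}); otherwise the proofs coincide.
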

Then the absolute value $|\eta(N+1/2)|$ is determined by the real part of
the exponent:
\[
\re \left( \mu(\mathcal{D})\int_{0}^{N} \xi \tan \pi \xi\,d\xi 
+ \sum_{\{R\}:\mathrm{elliptic}} \int_{0}^{N} \frac{-\pi}{m(R)\sin\theta(R)}
  \frac{\cos((2\theta(R)-\pi)\xi)}{\cos \pi\xi}\, d\xi \right)
\]
We calculate these integrals
in Propositions~\ref{prop:contribution_identity} and~\ref{prop:contribution_elliptic}
which arise from the identity element and elliptic elements in $\Gamma$.
Theorem~\ref{thm:Ruelle_zero} follows from Propositions~\ref{prop:eta_integral},
\ref{prop:contribution_identity} and~\ref{prop:contribution_elliptic}
together with the presentation~\eqref{eqn:presentation_Gamma} of $\Gamma$.

\begin{proposition}
  \label{prop:contribution_identity}
  The contribution of the identity element to the exponent of $|\eta(N+1/2)|$
  is expressed as
  \[
  \re \left( \mu(\mathcal{D})\int_{0}^{N} \xi \tan \pi \xi\,d\xi \right)
  = \mu(\mathcal{D}) \left( - \frac{N}{\pi} \log 2\right).
  \]
\end{proposition}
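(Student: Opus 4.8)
The plan is to evaluate the real part of $\mu(\mathcal{D})\int_0^N \xi \tan\pi\xi\,d\xi$ directly. The integrand has singularities at the half-integers $\xi = 1/2, 3/2, \ldots, N-1/2$ inside $[0,N]$, so the integral is improper; the claim is really about a principal-value (or regularized) evaluation whose real part is finite, and I expect this regularization to be exactly what is implicitly used when one writes $\eta(s)=\exp\bigl(\int_{1/2}^s \tfrac{d}{dz}\log\eta(z)\,dz\bigr)$ along a path slightly off the real axis, since $\eta$ is meromorphic and $\log\eta$ picks up imaginary contributions ($\pi\sqrt{-1}$ times winding) at the zeros/poles. So the first step is to fix the contour: integrate along $\xi \in [0,N]$ pushed into the upper (or lower) half-plane by $\sqrt{-1}\epsilon$, take $\epsilon \to 0$, and isolate the real part.

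First I would compute an antiderivative. Writing $\tan\pi\xi = -\frac{1}{\pi}\frac{d}{d\xi}\log\cos\pi\xi$, integration by parts gives
\[
\int_0^N \xi\tan\pi\xi\,d\xi
= \left[-\frac{\xi}{\pi}\log\cos\pi\xi\right]_0^N + \frac{1}{\pi}\int_0^N \log\cos\pi\xi\,d\xi.
\]
The boundary term at $\xi=N$ is $-\frac{N}{\pi}\log\cos\pi N = -\frac{N}{\pi}\log(\pm 1) = 0$ (taking the principal branch; any $\sqrt{-1}\pi$ ambiguity is purely imaginary and gets discarded), and the term at $\xi=0$ vanishes. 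So the whole thing reduces to $\frac{1}{\pi}\int_0^N \log\cos\pi\xi\,d\xi$, and by periodicity of $|\cos\pi\xi|$ with period $1$ this real part equals $\frac{N}{\pi}\int_0^1 \log|\cos\pi\xi|\,d\xi$.

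The key computational step is then the classical log-cosine integral $\int_0^1 \log|\cos\pi\xi|\,d\xi = -\log 2$ (equivalently $\int_0^{\pi}\log|\cos\theta|\,d\theta = -\pi\log 2$), which follows from the Fourier expansion $\log|2\cos\theta| = \sum_{k\ge 1}\frac{(-1)^{k+1}}{k}\cos 2k\theta$ or from the standard symmetry argument doubling the integral of $\log(2\sin)$. Multiplying back by $\mu(\mathcal{D})$ and the factor from integration by parts yields
\[
\re\left(\mu(\mathcal{D})\int_0^N \xi\tan\pi\xi\,d\xi\right) = \mu(\mathcal{D})\cdot\frac{N}{\pi}\cdot(-\log 2) = \mu(\mathcal{D})\left(-\frac{N}{\pi}\log 2\right),
\]
as claimed.

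The main obstacle I anticipate is bookkeeping around the singularities: one must argue that the imaginary contributions accumulated from passing the poles of $\tan\pi\xi$ (and the branch cuts of $\log\cos\pi\xi$) at each half-integer are purely imaginary, so that the real part is insensitive to the choice of deformation and the naive periodicity reduction is legitimate. Concretely, near $\xi = (2\ell-1)/2$ the function $\tan\pi\xi$ has a simple pole with real residue, so a small semicircular detour contributes $\pm\sqrt{-1}\pi\cdot(\text{residue})\cdot\xi$, which is imaginary at a real point $\xi$; summing these over $\ell = 1,\ldots,N$ stays imaginary. Once this is made precise — or once one simply defines $|\eta(N+1/2)|$ via the finite-part of the integral, which is what the subsequent computation of $|R_{\rho_{2N}}(0)|$ needs — the rest is the elementary calculus above.
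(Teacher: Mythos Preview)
Your proposal is correct and follows essentially the same route as the paper: integration by parts using $\tan\pi\xi = -\tfrac{1}{\pi}\tfrac{d}{d\xi}\log\cos\pi\xi$, vanishing of the boundary term at $\xi=N$, reduction by periodicity to $\tfrac{N}{\pi}\int_0^1 \log|\cos\pi\xi|\,d\xi$, and the classical evaluation $\int_0^1 \log|\cos\pi\xi|\,d\xi=-\log 2$. If anything, you are more careful than the paper, which simply writes ``$\mathrm{p.v.}$'' in front of the integral without discussing the pole contributions; your remarks about the semicircular detours producing purely imaginary terms make explicit what the paper leaves implicit.
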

\begin{proof}
  The real part of the path integral turns out to be 
  \begin{align*} 
    \re \left( \int_{0}^{N} \xi \tan \pi \xi\,d\xi \right)
    &= \re \left(\int_{0}^{N} \xi \tan \pi \xi \,d\xi\right)\\
    &= \left[\xi \frac{-1}{\pi} \log |\cos \pi\xi| \right]_0^N
    + \frac{1}{\pi}~\mathrm{p.v.} \int_0^N \log |\cos \pi\xi| \xi \\
    &= \frac{N}{\pi}~\mathrm{p.v.} \int_0^1 \log |\cos \pi\xi| \xi \\
    &= - \frac{N}{\pi} \log 2.
  \end{align*}
\end{proof}

Next we proceed to the contribution of elliptic elements in $\Gamma$
to the exponent of $|\eta(N+1/2)|$.
The set of conjugacy classes of elliptic elements is divided into
the subsets
\[
\{ \{R_0\}, \{R_0^2\}, \ldots, \{R_0^{m_0-1}\}\}
\]
of conjugacy classes which have the representatives of the same order $m_0$.
\begin{proposition}
  \label{prop:contribution_elliptic}
  Suppose that an elliptic element $R_0$ satisfies $\trace R_0 = 2\cos\theta(R_0)$
  with $\theta(R_0) = \pi / m_0 \,(m_0 \geq 2)$.
  Then the sum of
  \[
  \re \left( \int_{0}^{N} \frac{-\pi}{m(R)\sin\theta(R)}
  \frac{\cos((2\theta(R)-\pi)\xi)}{\cos \pi\xi}\, d\xi \right)
  \]
  over $\{ \{R_0\}, \{R_0^2\}, \ldots, \{R_0^{m_0-1}\}\}$
  is given by
  \[
  \log \prod_{k=1}^N (1 - e^{(2k-1)\pi \iu / m_0})(1 - e^{-(2k-1)\pi \iu / m_0})
  -\frac{2N}{m_0} \log 2.
  \]
\end{proposition}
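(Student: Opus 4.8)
The plan is to insert the known values of $m(R)$ and $\theta(R)$ for the powers of $R_0$, evaluate the resulting principal-value integrals in closed form, and then recognise the sum over the cyclic group $\langle R_0\rangle$ as the stated product. The group-theoretic input I would record first: since $\langle R_0\rangle$ is a finite cyclic subgroup of order $m_0$ of the Fuchsian group $\Gamma$ and every finite subgroup of $\Gamma$ is a point stabiliser, $Z(R_0^l)=\langle R_0\rangle$, so $m(R_0^l)=m_0$; and $\trace R_0^l=2\cos(l\pi/m_0)$ with $l\pi/m_0\in(0,\pi)$ gives $\theta(R_0^l)=l\pi/m_0$ for $l=1,\dots,m_0-1$. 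Thus the quantity to evaluate is
\[
\sum_{l=1}^{m_0-1}\frac{-\pi}{m_0\sin(l\pi/m_0)}\;\mathrm{p.v.}\!\int_0^N\frac{\cos\!\big((2l/m_0-1)\pi\xi\big)}{\cos\pi\xi}\,d\xi .
\]

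Next I would compute the principal-value integral for fixed $l$; set $a=(2l/m_0-1)\pi\in(-\pi,\pi)$. Splitting $[0,N]$ into the unit intervals $[j,j+1]$, each of which carries the single simple pole $\xi=j+\tfrac12$, and using $\cos\pi(\xi+1)=-\cos\pi\xi$ with the addition formulas, the pair $\big(\cos a\xi/\cos\pi\xi,\ \sin a\xi/\cos\pi\xi\big)$ is carried by $\xi\mapsto\xi+1$ to its image under $-R_a$, where $R_a$ is the rotation matrix of angle $a$; hence the integrals over consecutive intervals form a geometric progression with ratio $-R_a$, so $\mathrm{p.v.}\!\int_0^N=\big(\sum_{j=0}^{N-1}(-R_a)^j\big)v_0$ with $v_0=\mathrm{p.v.}\!\int_0^1(\cdots)\,d\xi$. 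Diagonalising $R_a$ and computing $v_0$ by symmetrising about $\xi=\tfrac12$ (the singular part is odd about the pole, so its principal value is $0$), I expect
\[
\mathrm{p.v.}\!\int_0^N\frac{\cos a\xi}{\cos\pi\xi}\,d\xi=\frac{-(-1)^N\sin(aN)}{\sin\theta}\int_0^{1/2}\frac{\sin(au)}{\sin\pi u}\,du ,
\]
and, for $a=(2l/m_0-1)\pi$, the prefactor simplifies through $(-1)^N\sin((2l/m_0-1)\pi N)=\sin(2\pi lN/m_0)$. (The same integral can alternatively be done by the rectangle-contour residue calculation of Subsection~\ref{subsec:review_functional_eqn}, or by the substitution $z=e^{\pi\iu\xi/m_0}$, which turns it into a rational-function integral along an arc of the unit circle crossing the $N$ poles.)

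Substituting back, the subset's contribution becomes $\sum_{l=1}^{m_0-1}\frac{\pi\sin(2\pi lN/m_0)}{m_0\sin^2(l\pi/m_0)}\int_0^{1/2}\frac{\sin((2l/m_0-1)\pi u)}{\sin\pi u}\,du$, which is periodic in $N$ with period $m_0$. The target expression $\sum_{k=1}^N\log\!\big(4\sin^2\tfrac{(2k-1)\pi}{2m_0}\big)-\tfrac{2N}{m_0}\log2$ is also $m_0$-periodic in $N$, because any $m_0$ consecutive terms of $\log\!\big(4\sin^2\tfrac{(2k-1)\pi}{2m_0}\big)$ sum to $2\log2$ (from $\prod_{k=1}^{m_0}2\sin\tfrac{(2k-1)\pi}{2m_0}=2$, itself a consequence of $\prod_{j=1}^{n-1}2\sin(j\pi/n)=n$). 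So it suffices to match the two finite discrete Fourier expansions in $N$: on the left the coefficient of $\sin(2\pi lN/m_0)$ is displayed above, on the right it is a classical log-sine sum. What remains is to put $\int_0^{1/2}\sin((2l/m_0-1)\pi u)/\sin\pi u\,du$ in closed form (via $z=e^{\pi\iu u}$, or the digamma reflection formula) and verify the match; the linear-in-$N$ discrepancy between the two bookkeepings is exactly $-\tfrac{2N}{m_0}\log2$, which, just as in the proof of Proposition~\ref{prop:contribution_identity}, traces back to $\mathrm{p.v.}\!\int_0^1\log|\cos\pi\xi|\,d\xi=-\log2$.

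I expect the main obstacle to be this final matching step. The individual integrals $\int_0^{1/2}\sin((2l/m_0-1)\pi u)/\sin\pi u\,du$ are elementary but unwieldy, and the collapse of the weighted sum over $l=1,\dots,m_0-1$ into the clean finite product $\prod_{k=1}^N(1-e^{(2k-1)\pi\iu/m_0})(1-e^{-(2k-1)\pi\iu/m_0})$ rests on a not-quite-obvious roots-of-unity identity; moreover the numerical constants must be kept exact, since the eventual comparison with $\Tor{M}{\rho_{2N}}$ (Theorem~\ref{thm:Ruelle_Rtorsion_eqn}) is an equality.
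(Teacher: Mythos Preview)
Your setup is correct and your periodicity observation is sound, but the route you choose diverges from the paper's and leaves exactly the hard step undone. You fix the power $l$ and attempt to evaluate $\mathrm{p.v.}\int_0^N\cos(a\xi)/\cos(\pi\xi)\,d\xi$ in closed form via the rotation--matrix recursion, then plan to match the resulting sine--Fourier series in $N$ against the discrete Fourier expansion of the target. The paper reverses the order: it fixes the interval $[k-1,k]$ and \emph{sums over $l=1,\dots,m_0-1$ first}. This is the decisive maneuver. After the substitution $z=e^{\pi\iu\xi/m_0}$ (which you mention only as an aside), the $l$-sum becomes a rational function $P(z)/Q(z)$ with $Q(z)=z^{2m_0}+1$; a partial-fraction expansion over the zeros $w^{m_0}=-1$ and the elementary identity $\sum_{p=1}^{m_0-1}(z_0w)^p=m_0-1$ or $-1$ collapse the whole interval's contribution to $2\log|1-e^{-(2k-1)\pi\iu/m_0}|-\tfrac{2}{m_0}\log 2$ directly (Lemmas~\ref{lemma:int_1st} and~\ref{lemma:int_jth}). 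Summing these over $k=1,\dots,N$ then gives the product immediately, with no Fourier matching required.

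What your approach buys is a conceptual explanation of the $m_0$-periodicity in $N$; what it costs is that the roots-of-unity cancellation, which in the paper happens once per interval in a few lines, is postponed to a global coefficient comparison that you yourself flag as the main obstacle. Concretely, your displayed formula for $\mathrm{p.v.}\int_0^N$ has an unresolved ``$\sin\theta$'' (presumably $\sin(l\pi/m_0)$), and even granting that, the reduction of $\sum_l\frac{\pi\sin(2\pi lN/m_0)}{m_0\sin^2(l\pi/m_0)}\int_0^{1/2}\frac{\sin((2l/m_0-1)\pi u)}{\sin\pi u}\,du$ to the stated product is precisely the roots-of-unity identity that the paper's ordering makes transparent. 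So the proposal is not wrong, but it stops short of the one nontrivial computation; the paper's ``sum over the cyclic group first, then integrate'' is what turns that computation into algebra.
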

  
\begin{lemma}
  \label{lemma:int_1st}
  The contribution of the elliptic elements $R_0^p (1\leq p \leq m_0 -1)$
  to the exponent of $|\eta(N+1/2)|$
  over the 1st interval $[0, 1]$ is expressed as 
  \[
  \re \left(
  \int_0^1
  \sum_{p=1}^{m_0-1}
  \frac{
    -\pi
  }{
    m_0 \sin\frac{p\pi}{m_0}
  }
  \frac{
    \cos(2\frac{p\pi}{m_0}-\pi)\xi
  }{
    \cos\pi\xi
  }
  \,d\xi
  \right)
  =
  2 \log |1 - e^{-\pi \iu / m_0}| - \frac{2}{m_0} \log2.
  \]
\end{lemma}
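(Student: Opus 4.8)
The plan is to collapse the whole sum over $\{R_0^p\}_{p=1}^{m_0-1}$ to a single integral. Since $\trace R_0^p = 2\cos(p\pi/m_0)$ we have $\theta(R_0^p)=p\pi/m_0$, and the centraliser of $R_0^p$ in $\Gamma$ has order $m(R_0^p)=m_0$, so $2\theta(R_0^p)-\pi = a_p\pi$ with $a_p := (2p-m_0)/m_0 \in (-1,1)$, and the $p$-th summand over $[0,1]$ is $\tfrac{-\pi}{m_0\sin(p\pi/m_0)}\,J(a_p)$, where $J(a):=\mathrm{p.v.}\int_0^1 \tfrac{\cos(a\pi\xi)}{\cos\pi\xi}\,d\xi$ for $a\in(-1,1)$. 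Everything reduces to evaluating $J(a)$ and then summing over $p$.

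To evaluate $J(a)$ I would write $\cos(a\pi\xi)=\re(e^{\iu a\pi\xi})$, substitute $x=\pi\xi$, and compute $\mathrm{p.v.}\int_0^\pi \tfrac{e^{\iu a x}}{\cos x}\,dx$ by integrating $e^{\iu a z}/\cos z$ around the rectangle with vertices $0,\pi,\pi+\iu R,\iu R$, indented into the upper half-plane at the simple pole $z=\pi/2$. For $|a|<1$ the top side decays as $R\to\infty$, the two vertical sides each produce $\int_0^\infty e^{-ay}/\cosh y\,dy$ (with an extra factor $e^{\iu a\pi}$ on the right side), and the small semicircle contributes a half-residue; there being no pole strictly inside, this gives $\mathrm{p.v.}\int_0^\pi \tfrac{e^{\iu a x}}{\cos x}\,dx = \iu V(a)(1+e^{\iu a\pi}) - \pi\iu\, e^{\iu a\pi/2}$ with $V(a):=\int_0^\infty e^{-ay}/\cosh y\,dy$. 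Expanding $1/\cosh y$ as a geometric series gives $V(a)=2\sum_{n\ge0}(-1)^n/(a+2n+1)$, and taking real parts one arrives at
\[
J(a)=\sin\!\Big(\tfrac{a\pi}{2}\Big)-\frac{\sin(a\pi)}{\pi}\,V(a).
\]
(One may instead split $\cos((2\theta-\pi)\xi)/\cos\pi\xi=\cos(2\theta\xi)+\sin(2\theta\xi)\tan\pi\xi$, integrate the second term by parts against the antiderivative $-\tfrac1\pi\log|\cos\pi\xi|$, and use the Fourier expansion $\log|2\cos\pi\xi|=\sum_{k\ge1}\tfrac{(-1)^{k-1}}{k}\cos(2k\pi\xi)$; this trades the contour for more bookkeeping but gives the same formula.)

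Next I would substitute $a=a_p$. Using $\sin(a_p\pi/2)=-\cos(p\pi/m_0)$, $\sin(a_p\pi)=-2\sin(p\pi/m_0)\cos(p\pi/m_0)$ and $V(a_p)=m_0\sum_{n\ge0}(-1)^n/(p+nm_0)$, the $p$-th summand $\tfrac{-\pi}{m_0\sin(p\pi/m_0)}J(a_p)$ becomes $\tfrac{\pi}{m_0}\cot(p\pi/m_0) - 2\cos(p\pi/m_0)\sum_{n\ge0}\tfrac{(-1)^n}{p+nm_0}$. Summing over $p=1,\dots,m_0-1$, the cotangent terms cancel in pairs $p\leftrightarrow m_0-p$, while reindexing $q=p+nm_0$ (so $q$ runs over the positive integers not divisible by $m_0$, and $(-1)^n\cos(p\pi/m_0)=\cos(q\pi/m_0)$) turns the rest into $-2\sum_{q\ge1,\,m_0\nmid q}\cos(q\pi/m_0)/q$. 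The classical identities $\sum_{q\ge1}\cos(q\vartheta)/q=-\log(2\sin(\vartheta/2))$ with $\vartheta=\pi/m_0$, together with $\sum_{j\ge1}(-1)^j/j=-\log2$ accounting for the multiples of $m_0$, give $-2\big(-\log(2\sin(\pi/2m_0))+\tfrac1{m_0}\log2\big)=2\log(2\sin(\pi/2m_0))-\tfrac2{m_0}\log2$, and $2\sin(\pi/2m_0)=|1-e^{-\pi\iu/m_0}|$ yields the claimed value.

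The main obstacle is the principal-value singularity at $\xi=1/2$ together with the conditional convergence of the series: one must justify the contour indentation (or, in the alternative route, the integration by parts across $\xi=1/2$, where the boundary contributions at $1/2\pm\varepsilon$ are $O(\varepsilon\log\varepsilon)$ and drop out), then justify interchanging $\sum_n$ with the $y$-integral, and — most delicately — the rearrangement of the conditionally convergent double sum $\sum_{p,n}$ into $\sum_q$, which is cleanest to carry out by Abel summation or by inserting a factor $x^q$ and letting $x\uparrow1$. The separate accounting of the subseries over multiples of $m_0$ also has to be tracked carefully so that the $-\tfrac{2}{m_0}\log2$ term emerges with the correct sign.
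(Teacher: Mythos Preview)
Your computation is correct and complete (modulo the convergence checks you yourself flag, all of which go through routinely), but it follows a genuinely different route from the paper.

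The paper does not evaluate the individual $p$-integrals. Instead it keeps the whole sum together, substitutes $z=e^{\pi\xi\iu/m_0}$ (so $d\xi = \frac{m_0}{\pi\iu}\,dz/z$), and rewrites the integrand as a single rational function $P(z)/Q(z)$ with $Q(z)=z^{2m_0}+1$. It then decomposes into partial fractions over the $2m_0$ roots of $Q$, integrates each term to $\log\frac{z_0-z_1}{1-z_1}$, and reduces the resulting finite sum by purely algebraic moves: pairing $z_1\leftrightarrow -z_1$, the substitution $w=z_1^2$, and the elementary evaluation $\sum_{p=1}^{m_0-1}(z_0w)^p = m_0-1$ or $-1$ according as $z_0w=1$ or not. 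The final step is just $\prod_{w^{m_0}=-1}(1-w)=2$.

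Your approach instead isolates each $p$, evaluates $J(a_p)$ by a rectangular contour to produce the auxiliary integral $V(a_p)=\int_0^\infty e^{-a_p y}/\cosh y\,dy$, expands $V$ as an alternating series, and then resums over $p$ via the reindexing $q=p+nm_0$ together with the classical Fourier identity $\sum_{q\ge1}\cos(q\vartheta)/q=-\log(2\sin(\vartheta/2))$. This is more analytic in flavor and arguably uses more familiar off-the-shelf tools; the trade-off is that the infinite series step requires the Abel-summation or $x\uparrow 1$ justification you mention, whereas the paper's sums are finite throughout. The paper's method also extends to the $k$-th interval (their Lemma~\ref{lemma:int_jth}) with essentially no change --- one simply integrates from $z_0^{k-1}$ to $z_0^{k}$ and the same partial-fraction algebra goes through --- while your method would require redoing the contour with base $[(k-1)\pi,k\pi]$, which works but is slightly more bookkeeping.
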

\begin{proof}
  Put $z=e^{\pi \xi \iu / m_0}$ and $z_0 = e^{\pi \iu / m_0}$.
  We can rewrite the integral as 
  \begin{align}
    \re \left(
    \int_0^1
    \sum_{p=1}^{m_0-1}
  \frac{
    -\pi
  }{
    m_0 \sin\frac{p\pi}{m_0}
  }
  \frac{
    \cos(2\frac{p\pi}{m_0}-\pi)\xi
  }{
    \cos\pi\xi
  }
  \,d\xi
  \right)
  &=
  \re \left(
  \int_1^{z_0}
  \frac{-1}{\iu z} 
  \sum_{p=1}^{m_0-1}
  \frac{
    z^{2p-m_0} + z^{-2p+m_0}
  }{
    \sin \frac{p\pi}{m_0}
    (z^{m_0} + z^{-m_0})
  }
  \,dz
  \right) \notag \\
  &=
  \re
  \int_1^{z_0} \frac{P(z)}{Q(z)} \, dz
  \label{eqn:real_int_P_Q}
  \end{align}
  where $P(z)$ and $Q(z)$ are the polynomials in $z$ defined as 
  \[
  P(z)
  = \frac{-1}{\iu z}
  \sum_{p=1}^{m_0-1}
  \frac{
    z^{2p-m_0} + z^{-2p+m_0}
  }{
    \sin \frac{p\pi}{m_0}
  }
  z^{m_0}, \quad
  Q(z) = (z^{m_0} + z^{-m_0})z^{m_0} = z^{2m_0} + 1.
  \]
  Since the zeros of $Q(z)$ are $\{\pm e^{(2q-1)\pi \iu/(2m_0)} \,|\, q = 1, \ldots, m_0\}$,
  the integrand is decomposed into the sum of the partial fractions. Then we have 
  \begin{align}
    \int_1^{z_0} \frac{P(z)}{Q(z)} \, dz
    &=
    \int_1^{z_0}
    \sum_{z_1: Q(z_1)=0}
    \frac{1}{z-z_1}\frac{P(z_1)}{Q'(z_1)}
    \,dz \notag \\
    &=
    \sum_{z_1: Q(z_1)=0}
    \frac{P(z_1)}{Q'(z_1)}
    \log\frac{z_0-z_1}{1-z_1} \notag \\
    \intertext{by $z_1^{2m_0} = -1$ and $P(z_1)/Q'(z_1) = P(-z_1) / Q'(-z_1)$ }
    &=
    \sum_{z_1}
    \frac{1}{m_0}
    \sum_{p=1}^{m_0-1}\frac{z_1^{2p}-z_1^{-2p}}{z_0^p - z_0^{-p}}
    \left(
    \log\frac{z_0-z_1}{1-z_1}
    + \log\frac{z_0+z_1}{1+z_1}
    \right) \notag \\
    &=
    \sum_{z_1}
    \frac{1}{m_0}
    \sum_{p=1}^{m_0-1}\frac{z_1^{2p}-z_1^{-2p}}{z_0^p - z_0^{-p}}
    \left(
    \log (z_0^2-z_1^2)
    - \log (1-z_1^2)
    \right)
    \label{eqn:integral_I}
  \end{align}
  where $z_1$ in the last sum runs over $\{e^{(2q-1) \pi \iu / (2m_0)} \,|\, q = 1, \ldots, m_0\}$.
  The real part of $\log(z_0^2 - z_1^2)$ in~\eqref{eqn:integral_I}
  is the same as $\log|1-z_1^2 / z_0^2|$ since $z_0$ is a root of unity.
  By putting $w=z_1^2$ in~\eqref{eqn:integral_I}, we can express~\eqref{eqn:real_int_P_Q} as   
  \begin{align*}
    \re
    \int_1^{z_0} \frac{P(z)}{Q(z)} \, dz
    &=
    \sum_{w^{m_0} = -1}
    \frac{1}{m_0}
    \sum_{p=1}^{m_0-1}\frac{w^p - w^{-p}}{z_0^p - z_0^{-p}}
    \left(
    \log \left|1-\frac{w}{z_0^2}\right|
    - \log|1-w|
    \right) \\
    &=
    \sum_{w^{m_0} = -1}
    \frac{1}{m_0}
    \left(
    \log \left|1-\frac{w}{z_0^2}\right|
    \sum_{p=1}^{m_0-1}\frac{w^p - w^{-p}}{z_0^p - z_0^{-p}}
    \right) \\
    &\qquad- \sum_{w^{m_0} = -1}
    \frac{1}{m_0}
    \left(
    \log|1-w|
    \sum_{p=1}^{m_0-1}\frac{w^p - w^{-p}}{z_0^p - z_0^{-p}}
    \right), \\
    \intertext{since $w/z_0^2$ also satisfies $(w/z_0^2)^{m_0} =-1$,}
    &=
    \frac{1}{m_0}
    \sum_{w^{m_0} = -1}
    \left(
    \log|1-w|
    \sum_{p=1}^{m_0-1}\frac{(z_0^2 w)^p - (z_0^2 w)^{-p} - w^p + w^{-p}}{z_0^p - z_0^{-p}}
    \right)\\
    &=
    \frac{1}{m_0}
    \sum_{w^{m_0} = -1}
    \left(
    \log|1-w|
    \sum_{p=1}^{m_0-1}(z_0^p w^p + z_0^{-p}w^{-p})
    \right)\\
    &=
    \frac{2}{m_0}
    \sum_{w^{m_0} = -1}
    \left(
    \log|1-w|
    \sum_{p=1}^{m_0-1}(z_0 w)^p
    \right),\\
    \intertext{as the sum of $(z_0 w)^p$ turns into $m_0-1$ or $-1$ if $z_0 w = 1$ or not,}
    &=
    \frac{2}{m_0}
    \left(
    (m_0-1)\log|1-z_0^{-1}|
    -
    \sum_{w^{m_0} = -1, w \not = z_0^{-1}}
    \log|1-w|
    \right)\\
    &=
    \frac{2}{m_0}
    \left(
    m_0\log|1-z_0^{-1}|
    -
    \sum_{w^{m_0} = -1}
    \log|1-w|
    \right) \\
    &=
    2\log |1-z_0^{-1}| - \frac{2}{m_0} \log 2
  \end{align*}
  by $\prod_{w^{m_0}=-1} (x-w) = x^{m_0} + 1$.
  This completes the proof.
\end{proof}

\begin{lemma}
  \label{lemma:int_jth}
  The contribution of the elliptic elements $R_0^p (1\leq p \leq m_0 -1)$
  to the exponent of $|\eta(N+1/2)|$
  over the $k$-th interval $[k-1, k]$ is expressed as 
  \begin{equation}
    \label{eqn:integral_jth}
  \re \left(
  \int_{k-1}^k
  \sum_{p=1}^{m_0-1}
  \frac{
    -\pi
  }{
    m_0 \sin\frac{p \pi}{m_0}
  }
  \frac{
    \cos(2\frac{p \pi}{m_0}-\pi)\xi
  }{
    \cos\pi\xi
  }
  \,d\xi
  \right)
  =
  2 \log |1 - e^{-(2k-1)\pi \iu / m_0}| - \frac{2}{m_0} \log2
  \end{equation}
\end{lemma}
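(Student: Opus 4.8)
The plan is to reproduce the proof of Lemma~\ref{lemma:int_1st} essentially verbatim; the only structural change is a shift of the arc of integration. Applying the substitution $z = e^{\pi\xi\iu/m_0}$ with $z_0 = e^{\pi\iu/m_0}$ to the left-hand side of~\eqref{eqn:integral_jth} carries the interval $[k-1,k]$ to the circular arc from $z_0^{k-1}$ to $z_0^k$, while the integrand becomes the very same rational function $P(z)/Q(z)$ with $Q(z)=z^{2m_0}+1$ that appears in Lemma~\ref{lemma:int_1st}. Since the partial fraction expansion of $P(z)/Q(z)$ does not depend on the contour, integrating term by term gives
\[
\int_{z_0^{k-1}}^{z_0^k}\frac{P(z)}{Q(z)}\,dz
= \sum_{z_1:\,Q(z_1)=0}\frac{P(z_1)}{Q'(z_1)}\log\frac{z_0^k-z_1}{z_0^{k-1}-z_1},
\]
so the sole difference from the case $k=1$ is that $\log\dfrac{z_0-z_1}{1-z_1}$ is replaced by $\log\dfrac{z_0^k-z_1}{z_0^{k-1}-z_1}$.

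First I would carry out the same reductions as in Lemma~\ref{lemma:int_1st}: pair the zeros $z_1$ and $-z_1$ of $Q$ using $P(z_1)/Q'(z_1)=P(-z_1)/Q'(-z_1)$, merge the two logarithms into $\log\dfrac{z_0^{2k}-z_1^2}{z_0^{2(k-1)}-z_1^2}$, substitute $w=z_1^2$ (so $w^{m_0}=-1$), and take real parts, using $|z_0|=1$ to rewrite $|z_0^{2a}-w|$ as $|1-wz_0^{-2a}|$. This brings the left-hand side of~\eqref{eqn:integral_jth} to
\[
\sum_{w^{m_0}=-1}\frac{1}{m_0}\sum_{p=1}^{m_0-1}\frac{w^p-w^{-p}}{z_0^p-z_0^{-p}}
\Bigl(\log\bigl|1-wz_0^{-2k}\bigr|-\log\bigl|1-wz_0^{-2(k-1)}\bigr|\Bigr).
\]
Exactly as in Lemma~\ref{lemma:int_1st}, I then substitute $w\mapsto z_0^{2k}w$ in the first term and $w\mapsto z_0^{2(k-1)}w$ in the second (this is legitimate because $z_0^{2k}$ and $z_0^{2(k-1)}$ are $m_0$-th roots of unity, so the index set $\{w:w^{m_0}=-1\}$ is preserved), which turns the expression into $\dfrac{1}{m_0}\sum_{w^{m_0}=-1}\log|1-w|\,S_k(w)$ with
\[
S_k(w)=\sum_{p=1}^{m_0-1}\frac{(z_0^{2k}w)^p-(z_0^{2k}w)^{-p}-(z_0^{2(k-1)}w)^p+(z_0^{2(k-1)}w)^{-p}}{z_0^p-z_0^{-p}}.
\]

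The only genuinely new computation is the evaluation of $S_k(w)$, and it is a routine extension of the $k=1$ case: factoring $z_0^{2(k-1)p}(z_0^{2p}-1)$ out of the numerator and using $z_0^{2p}-1=z_0^p(z_0^p-z_0^{-p})$ collapses each summand, giving $S_k(w)=\sum_{p=1}^{m_0-1}\bigl((z_0^{2k-1}w)^p+(z_0^{2k-1}w)^{-p}\bigr)$, which is precisely the $k=1$ expression with $z_0w$ replaced by $z_0^{2k-1}w$. Since $(z_0^{2k-1}w)^{m_0}=(z_0^{m_0})^{2k-1}w^{m_0}=1$, the number $z_0^{2k-1}w$ is an $m_0$-th root of unity, so $S_k(w)=2(m_0-1)$ when $w=z_0^{-(2k-1)}$ — which does satisfy $w^{m_0}=-1$ — and $S_k(w)=-2$ otherwise. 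Hence the left-hand side of~\eqref{eqn:integral_jth} equals
\[
2\log\bigl|1-z_0^{-(2k-1)}\bigr|-\frac{2}{m_0}\sum_{w^{m_0}=-1}\log|1-w|
=2\log\bigl|1-e^{-(2k-1)\pi\iu/m_0}\bigr|-\frac{2}{m_0}\log 2,
\]
using $\prod_{w^{m_0}=-1}(1-w)=1^{m_0}+1=2$, which is the desired identity~\eqref{eqn:integral_jth}.

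The one point that needs care — already implicit in Lemma~\ref{lemma:int_1st} — is that the arc from $z_0^{k-1}$ to $z_0^k$ passes through the single zero $z_1^\ast=e^{(2k-1)\pi\iu/(2m_0)}$ of $Q$, which is the image of the pole of the original integrand at $\xi=k-\tfrac12$; thus the integral should be read as a principal value, and the term-by-term integration interpreted accordingly. However, the residue $P(z_1^\ast)/Q'(z_1^\ast)=\tfrac1{m_0}\sum_{p=1}^{m_0-1}\sin\bigl((2k-1)p\pi/m_0\bigr)/\sin\bigl(p\pi/m_0\bigr)$ is real, so the principal value differs from an ordinary integral along a nearby pole-avoiding path only by a purely imaginary quantity; equivalently, the ratio $\bigl(z_0^k-z_1^\ast\bigr)/\bigl(z_0^{k-1}-z_1^\ast\bigr)=-z_0^{1/2}$ occurring in the sum has modulus one and drops out upon taking $\re$. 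So the real-part identity is unaffected. I expect this branch-tracking to be the only delicate aspect; everything else is the replacement of $z_0^{-1}$ by $z_0^{-(2k-1)}$ throughout the computation of Lemma~\ref{lemma:int_1st}.
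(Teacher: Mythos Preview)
Your proof is correct and follows essentially the same route as the paper's own argument: the paper likewise reduces to $\re\int_{z_0^{k-1}}^{z_0^k}P(z)/Q(z)\,dz$, passes to the partial-fraction sum, and arrives at exactly your $S_k(w)$ before concluding as you do. The only cosmetic difference is that the paper substitutes $w=z_1^2/z_0^{2(k-1)}$ in a single step rather than your two-step reindexing $w=z_1^2$ followed by $w\mapsto z_0^{2k}w$, $w\mapsto z_0^{2(k-1)}w$; your explicit treatment of the principal-value issue at the pole $\xi=k-\tfrac12$ is a point the paper leaves implicit.
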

\begin{proof}
  This follows in much the same way as in the proof of Lemma~\ref{lemma:int_1st}.
  We show the differences from the proof of Lemma~\ref{lemma:int_1st} under the same notations.
  The integral of the left hand side in~\eqref{eqn:integral_jth} turns into 
  \begin{align*}
    &\re \int_{z_0^{k-1}}^{z_0^k} \frac{P(z)}{Q(z)} \, dz\\
    &=
    \re \sum_{z_1: Q(z_1)=0}
    \frac{P(z_1)}{Q'(z_1)}
    \log \frac{z_0^k - z_1}{z_0^{k-1} - z_1}, \\
    \intertext{by a similar argument in the proof of Lemma~\ref{lemma:int_1st}
      and putting $w = z_1^2 / z_0^{2(k-1)}$,}
    &=
    \sum_{w^{m_0} = -1}
    \frac{1}{m_0}
    \sum_{p=1}^{m_0-1}\frac{(z^{2(k-1)}w)^p - (z^{2(k-1)}w)^{-p}}{z_0^p - z_0^{-p}}
    \left(
    \log \left|1-\frac{w}{z_0^2}\right|
    - \log|1-w|
    \right)\\
    &=\sum_{w^{m_0} = -1}
    \frac{1}{m_0}
    \log \left|1-\frac{w}{z_0^2}\right|
    \sum_{p=1}^{m_0-1}\frac{(z^{2(k-1)}w)^p - (z^{2(k-1)}w)^{-p}}{z_0^p - z_0^{-p}}\\
    &\qquad
    -\sum_{w^{m_0} = -1}
    \frac{1}{m_0}
    \log|1-w|
    \sum_{p=1}^{m_0-1}\frac{(z^{2(k-1)}w)^p - (z^{2(k-1)}w)^{-p}}{z_0^p - z_0^{-p}}\\
    &=
    \frac{1}{m_0}
    \sum_{w^{m_0} = -1}
    \left(
    \log|1-w|
    \sum_{p=1}^{m_0-1}
    \frac{
      (z_0^{2k} w)^p - (z_0^{2k} w)^{-p} - (z_0^{2(k-1)} w)^p + (z_0^{2(k-1)}w)^{-p}
    }{
      z_0^p - z_0^{-p}
    }
    \right)
    \\
    &=
    \frac{1}{m_0}
    \sum_{w^{m_0} = -1}
    \left(
    \log|1-w|
    \sum_{p=1}^{m_0-1}((z_0^{2k-1} w)^p + (z_0^{2k-1} w)^{-p})
    \right) \\
    &= 2\log |1 - z_0^{-(2k-1)}| - \frac{2}{m_0}\log 2
  \end{align*}
  which proves our claim.
\end{proof}

Finally we proceed to show Proposition~\ref{prop:contribution_elliptic}.
\begin{proof}[Proof of Proposition~\ref{prop:contribution_elliptic}]
  By Lemmas~\ref{lemma:int_1st} and \ref{lemma:int_jth}, 
  our sum over the conjugacy classes $\{R_0\}, \{R_0^2\}, \ldots, \{R_0^{m_0-1}\}$
  turns out to be 
  \begin{align*}
    &\sum_{p=1}^{m_0-1} \re  \left( \int_{0}^{N} \frac{-\pi}{m(R_0^p) \sin\theta(R_0^p)}
    \frac{\cos((2\theta(R_0^p)-\pi)\xi)}{\cos \pi\xi}\, d\xi \right)\\
    &=
    2 \sum_{k=1}^N \log|1 - e^{-(2k-1)\pi \iu / m_0}| - \frac{2N}{m_0} \log 2\\
    &=
    \log \prod_{k=1}^N |1 - e^{-(2k-1)\pi \iu / m_0}|^2 - \frac{2N}{m_0} \log 2\\
    &=
    \log \prod_{k=1}^N (1 - e^{(2k-1)\pi \iu / m_0})(1 - e^{-(2k-1)\pi \iu / m_0}) - \frac{2N}{m_0} \log 2.
  \end{align*}  
\end{proof}

\subsection{Relation to the higher-dimensional Reidemeister torsion}
\label{subsec:Ruelle_Rtorsion_eq}
We regard the fundamental group $\pi_1(\Gamma \backslash \PSLR)$
as the subgroup $\tilde \Gamma$ in $\TPSLR$. 
Assume that
the representation $\rho$ of $\pi_1(\Gamma \backslash \PSLR)$ is
the restriction of $\TPSLR \to \SLR$ to $\tilde \Gamma$.
This means that our representation is induced by the geometric description
$\tilde \Gamma \backslash \TSLR$ of $\Gamma \backslash \PSLR$.
We will show that the absolute value at zero of
the Ruelle zeta function $R_{\rho_{2N}}(s)$
for the induced $\SL[2N]$-representation $\rho_{2N}$ coincides with 
the higher-dimensional Reidemeister torsion $\Tor{M}{\rho_{2N}}$
for $M = \Gamma \backslash \PSLR$ with $\rho_{2N}$.

For this purpose, we find out the order of the $\SLR$-matrix $\rho(\ell_j)$
corresponding to the exceptional fiber $\ell_j$ of $M$ in $\SL$.
\begin{lemma}
  \label{lemma:rho_ell}
  The $\SLR$-matrix $\rho(\ell_j)$ for each exceptional fiber $\ell_j$
  is conjugate to
  \[\begin{pmatrix}
    e^{\pi \iu / \alpha_j} & 0 \\
    0 & e^{-\pi \iu /\alpha_j}
  \end{pmatrix}\]
  in $\SL$.
  In particular, the order of $\rho(\ell_j)$ is equal to $2\alpha_j$ in $\SL$.
\end{lemma}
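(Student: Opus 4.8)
The plan is to compute $\rho(\ell_j)$ explicitly from the Seifert relation $\ell_j = q_j^{-1}h^{-1}$ recorded in~\eqref{eqn:exceptional_fiber} together with the description of $\rho$ on the generators $q_j$ and $h$ furnished by Proposition~\ref{prop:natural_rep_M}, and then to read off its eigenvalues.

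First I would use that $\rho(h) = -\I$ is central, so $\rho(\ell_j) = \rho(q_j)^{-1}\rho(h)^{-1} = -\rho(q_j)^{-1}$. By Proposition~\ref{prop:natural_rep_M} the matrix $\rho(q_j)$ is conjugate (in $\SLR$, hence in $\SL$) to the rotation by angle $(1-\alpha_j)\pi/\alpha_j$, so its eigenvalues are $e^{\pm(1-\alpha_j)\pi\iu/\alpha_j}$; therefore $\rho(q_j)^{-1}$ has eigenvalues $e^{\pm(\alpha_j-1)\pi\iu/\alpha_j}$ and $\rho(\ell_j)$ has eigenvalues $-e^{\pm(\alpha_j-1)\pi\iu/\alpha_j}$.

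The next step is a reduction of the rotation angle modulo $2\pi$: writing $-1 = e^{\pi\iu}$ and using $(\alpha_j-1)\pi/\alpha_j = \pi - \pi/\alpha_j$, I would check that
\[
-e^{(\alpha_j-1)\pi\iu/\alpha_j} = e^{-\pi\iu/\alpha_j}, \qquad -e^{-(\alpha_j-1)\pi\iu/\alpha_j} = e^{\pi\iu/\alpha_j},
\]
so $\rho(\ell_j)$ has eigenvalues $e^{\pm\pi\iu/\alpha_j}$. Since $\alpha_j \geq 2$ these two eigenvalues are distinct, hence $\rho(\ell_j)$ is diagonalizable over $\C$, and any element of $\SL$ with distinct eigenvalues $e^{\pm\pi\iu/\alpha_j}$ is conjugate in $\SL$ to $\mathrm{diag}(e^{\pi\iu/\alpha_j}, e^{-\pi\iu/\alpha_j})$; this yields the first assertion. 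For the order, I would note that $e^{\pi\iu/\alpha_j} = e^{2\pi\iu/(2\alpha_j)}$ is a primitive $2\alpha_j$-th root of unity (as $\gcd(1,2\alpha_j)=1$), so the diagonal matrix above, and therefore $\rho(\ell_j)$, has order exactly $2\alpha_j$ in $\SL$.

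I do not anticipate a genuine obstacle: the argument is a short computation once Proposition~\ref{prop:natural_rep_M} and~\eqref{eqn:exceptional_fiber} are in hand. The only points that require mild care are the bookkeeping of the rotation angle modulo $2\pi$ (so that the eigenvalues are presented in the reduced form $e^{\pm\pi\iu/\alpha_j}$) and the observation that diagonalizable $\SL$-matrices with a prescribed pair of distinct eigenvalues form a single $\SL$-conjugacy class, so that passing from eigenvalue data to conjugacy inside $\SL$ is legitimate.
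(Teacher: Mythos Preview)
Your proposal is correct and follows essentially the same route as the paper: use $\ell_j = q_j^{-1}h^{-1}$ together with $\rho(h)=-\I$ and the rotation form of $\rho(q_j)$ from Proposition~\ref{prop:natural_rep_M} to obtain $\rho(\ell_j)=-\rho(q_j)^{-1}$ and then read off the eigenvalues $e^{\pm\pi\iu/\alpha_j}$. The paper's proof is terser, leaving the angle reduction and the order computation implicit, whereas you spell these out; both arguments are the same in substance.
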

\begin{proof}
  By the relation~\eqref{eqn:exceptional_fiber}, $\rho(\ell_j)$ is expressed as
  $\rho(\ell_j) = \rho(q_j)^{-1} \rho(h)^{-1}$.
  It follows from Proposition~\ref{prop:natural_rep_M} that
  $\rho(q_j)^{-1} \rho(h)^{-1} = -\rho(q_j)^{-1}$ is conjugate to
    $\begin{pmatrix}
    e^{\pi \iu / \alpha_j} & 0 \\
    0 & e^{-\pi \iu /\alpha_j}
  \end{pmatrix}$ in $\SL$.
\end{proof}

It follows from Lemma~\ref{lemma:rho_ell} and Remark~\ref{remark:sigma_n}
that the $\SL[2N]$-matrix $\rho_{2N}(\ell_j)$ has the eigenvalues
$e^{\pm (2N-1) \pi \iu / \alpha_j}, e^{\pm (2N-3) \pi \iu / \alpha_j}, \ldots, e^{\pm\pi \iu / \alpha_j}$.
By~\cite[Proposition~4.8]{Yamaguchi:asymptoticsRtorsion}, 
the Reidemeister torsion for $\Gamma \backslash \PSLR$ and $\rho_{2N}$ is given as follows.
\begin{proposition}
  \label{prop:Rtorsion_explicit}
  The higher-dimensional Reidemeister torsion for $M = \Gamma \backslash \PSLR$ with $\rho_{2N}$
  is expressed as
  \begin{equation}
  \Tor{M}{\rho_{2N}}
  = 2^{-2N(2-2g-m)} \prod_{j=1}^m \prod_{k=1}^N \left(2 \sin \frac{\pi (2k-1)}{2\alpha_j} \right	)^{-2}
  \label{eqn:RtorsionExplicit_value}\\
  \end{equation}
\end{proposition}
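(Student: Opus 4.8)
The plan is to reduce the statement to the general computation of the higher-dimensional Reidemeister torsion for Seifert fibered spaces carried out in~\cite[Proposition~4.8]{Yamaguchi:asymptoticsRtorsion}, and then to substitute the explicit eigenvalue data of $\rho_{2N}$ obtained above. Recall that~\cite[Proposition~4.8]{Yamaguchi:asymptoticsRtorsion} expresses $\Tor{M}{\rho_{2N}}$, for a Seifert fibered space $M$ whose base orbifold has genus $g$ and $m$ cone points and a representation with $\rho_{2N}(h)=-\I_{2N}$, as a product of a base-surface factor built from $\det(\I_{2N}-\rho_{2N}(h))=\det(2\I_{2N})=2^{2N}$ raised to a power determined by the Euler characteristic $2-2g-m$ of the base with the cone points removed, and, for each exceptional fiber $\ell_j$, a determinant factor built from the eigenvalues of $\rho_{2N}(\ell_j)$. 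Before applying it I would verify the hypotheses: the local system is acyclic by~\cite[Proposition~4.1]{Yamaguchi:asymptoticsRtorsion} together with Proposition~\ref{prop:rho_2N_h} (this is exactly the Corollary recorded after Proposition~\ref{prop:rho_2N_h}), the relation $\rho_{2N}(h)=-\I_{2N}$ is Proposition~\ref{prop:rho_2N_h}, and the Seifert index is $(g;(1,2g-2),(\alpha_1,\alpha_1-1),\dots,(\alpha_m,\alpha_m-1))$ as recorded in Subsection~\ref{subsec:review_Seifert}.

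Next I would pin down the contribution of the $j$-th exceptional fiber. By Lemma~\ref{lemma:rho_ell} the matrix $\rho(\ell_j)$ is conjugate in $\SL$ to $\mathrm{diag}(e^{\pi\iu/\alpha_j},e^{-\pi\iu/\alpha_j})$, so by Remark~\ref{remark:sigma_n} the eigenvalues of $\rho_{2N}(\ell_j)=\sigma_{2N}(\rho(\ell_j))$ are precisely $e^{\pm(2k-1)\pi\iu/\alpha_j}$ for $k=1,\dots,N$; since $2k-1$ is odd, none of them equals $1$, consistent with the acyclicity used above. The associated determinant factor is then
\[
\prod_{k=1}^N\bigl(1-e^{(2k-1)\pi\iu/\alpha_j}\bigr)\bigl(1-e^{-(2k-1)\pi\iu/\alpha_j}\bigr)
=\prod_{k=1}^N\bigl|1-e^{(2k-1)\pi\iu/\alpha_j}\bigr|^2
=\prod_{k=1}^N\Bigl(2\sin\frac{(2k-1)\pi}{2\alpha_j}\Bigr)^2,
\]
using $|1-e^{\iu\varphi}|^2=4\sin^2(\varphi/2)$. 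Since $\Tor{M}{\rho_{2N}}$ in our convention is the reciprocal of the quantity assembled from these determinants, these factors enter~\eqref{eqn:RtorsionExplicit_value} with exponent $-2$; taking the product over $j=1,\dots,m$ and attaching the base-surface factor $2^{-2N(2-2g-m)}$ coming from~\cite[Proposition~4.8]{Yamaguchi:asymptoticsRtorsion} yields the asserted formula.

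The main obstacle I anticipate is not conceptual but a matter of matching conventions: transporting the normalization of~\cite[Proposition~4.8]{Yamaguchi:asymptoticsRtorsion} correctly — whether it produces the torsion or its inverse, and exactly how the power of $2^{2N}=\det(2\I_{2N})$ is packaged against the exceptional-fiber determinants and against $\dim V_{2N}$ — so that the overall exponent $-2N(2-2g-m)$ and the exponent $-2$ on each $2\sin\frac{(2k-1)\pi}{2\alpha_j}$ come out exactly as in~\eqref{eqn:RtorsionExplicit_value}. Once that bookkeeping is in place, the proof is a direct substitution of the eigenvalue data of $\rho_{2N}(\ell_j)$ and $\rho_{2N}(h)$.
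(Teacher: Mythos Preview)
Your proposal is correct and follows essentially the same approach as the paper: invoke \cite[Proposition~4.8]{Yamaguchi:asymptoticsRtorsion} and substitute the eigenvalue data of $\rho_{2N}(\ell_j)$ obtained from Lemma~\ref{lemma:rho_ell} together with Remark~\ref{remark:sigma_n}. The paper in fact gives no further argument beyond that citation, so your write-up is a more detailed version of the same proof.
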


We are now in position to show the relation
between the Ruelle zeta function and the Reidemeister torsion.
\begin{theorem}
  \label{thm:Ruelle_Rtorsion_eqn}
  $|R_{\rho_{2N}}(0)| = \Tor{M}{\rho_{2N}}$
\end{theorem}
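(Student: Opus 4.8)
The plan is to prove the identity by comparing the two closed formulas already in hand: the expression for $|R_{\rho_{2N}}(0)|^{-1}$ in Theorem~\ref{thm:Ruelle_zero} and the expression for $\Tor{M}{\rho_{2N}}$ in Proposition~\ref{prop:Rtorsion_explicit}. Concretely, I would show that the right-hand side of~\eqref{eqn:Ruelle_zero_elliptic} is the reciprocal of the right-hand side of~\eqref{eqn:RtorsionExplicit_value} after translating the Selberg-side elliptic data into the Seifert invariants $\alpha_j$. So the proof is essentially a dictionary check followed by an elementary simplification.

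First I would pin down the elliptic data. Each cone point of order $\alpha_j$ of $\Sigma = \Gamma\backslash\upperH$ is encircled by the torsion generator $q_j$ of the presentation~\eqref{eqn:presentation_Gamma}, and $\langle q_j\rangle\cong\Z/\alpha_j$ is a maximal finite cyclic subgroup of $\Gamma$; the conjugacy classes of elliptic elements of $\Gamma$ are precisely those of the powers $q_j^p$ with $1\le j\le m$ and $1\le p\le\alpha_j-1$. By Proposition~\ref{prop:natural_rep_M}, $q_j\in\Gamma\subset\PSLR$ is conjugate to the rotation of angle $\pi/\alpha_j$, so $\theta(q_j)=\pi/\alpha_j$, and since the centralizer $Z(q_j)=\langle q_j\rangle$ has order $\alpha_j$ we get $m(q_j)=\alpha_j$. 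Substituting $\theta(q_j)=\pi/\alpha_j$ into~\eqref{eqn:Ruelle_zero_elliptic} and using the elementary identity $(1-e^{\phi\iu})(1-e^{-\phi\iu})=\bigl(2\sin\tfrac{\phi}{2}\bigr)^2$ with $\phi=(2k-1)\pi/\alpha_j$, the elliptic factor of $|R_{\rho_{2N}}(0)|^{-1}$ becomes $\exp\!\bigl[\sum_{j=1}^m\bigl(\log\prod_{k=1}^N(2\sin\tfrac{(2k-1)\pi}{2\alpha_j})^2-\tfrac{2N}{\alpha_j}\log 2\bigr)\bigr]$. For the identity factor I would invoke Gauss--Bonnet, $\mu(\mathcal{D})=-2\pi\chi^{\mathrm{orb}}=2\pi\bigl(2g-2+m-\sum_{j=1}^m\tfrac1{\alpha_j}\bigr)$, which turns $\exp[\mu(\mathcal{D})(-\tfrac{N}{\pi}\log 2)]$ into $\exp\!\bigl[-2N(2g-2+m)\log 2+2N\log 2\sum_{j=1}^m\tfrac1{\alpha_j}\bigr]$.

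Multiplying the two factors, the terms $2N\log 2\sum_j\tfrac1{\alpha_j}$ and $-\sum_j\tfrac{2N}{\alpha_j}\log 2$ cancel exactly, leaving $|R_{\rho_{2N}}(0)|^{-1}=2^{-2N(2g-2+m)}\prod_{j=1}^m\prod_{k=1}^N\bigl(2\sin\tfrac{(2k-1)\pi}{2\alpha_j}\bigr)^2$. Since $2g-2+m=-(2-2g-m)$, the right-hand side is precisely the reciprocal of the expression in Proposition~\ref{prop:Rtorsion_explicit}, hence $|R_{\rho_{2N}}(0)|^{-1}=\Tor{M}{\rho_{2N}}^{-1}$ and therefore $|R_{\rho_{2N}}(0)|=\Tor{M}{\rho_{2N}}$. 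I do not expect a serious obstacle here: the only point requiring care is the bookkeeping that matches the Selberg-side invariants $\theta(q_j),m(q_j)$ with the Seifert data $\alpha_j$ (via Proposition~\ref{prop:natural_rep_M}) and the Gauss--Bonnet rewriting of $\mu(\mathcal{D})$; once those dictionaries are fixed, the cancellation is automatic and the two formulas agree by inspection.
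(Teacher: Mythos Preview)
Your proposal is correct and follows essentially the same route as the paper's proof: both identify $\theta(q_j)=\pi/\alpha_j$ and $m(q_j)=\alpha_j$, rewrite the elliptic product via $(1-e^{\phi\iu})(1-e^{-\phi\iu})=(2\sin\tfrac{\phi}{2})^2$, use Gauss--Bonnet to expand $\mu(\mathcal{D})$ in terms of $g$, $m$, and the $\alpha_j$, observe the cancellation of the $\sum_j 1/\alpha_j$ terms, and then match against Proposition~\ref{prop:Rtorsion_explicit}. The only cosmetic difference is that the paper passes through $\chi^{\mathrm{orb}}(\Sigma)=2-2g-\sum_j(\alpha_j-1)/\alpha_j$ as an intermediate step while you expand $\mu(\mathcal{D})$ directly.
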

\begin{proof}
  We have seen $|R_{\rho_{2N}}(0)|$ in Theorem~\ref{thm:Ruelle_zero}.
  The angle $\theta(q_j)$ and the order $m(q_j)$ in the equality~\eqref{eqn:Ruelle_zero_elliptic}
  satisfy $\theta(q_j) = \pi / \alpha_j$ and $m(q_j) = \alpha_j$
  (note that $m(q_j)$ is the order of $\rho(q_j)$ in $\PSLR$).
  By the Gauss--Bonnet theorem $\mu(\mathcal{D})  = -2\pi \chi^{\mathrm{orb}}(\Sigma)$,
  we can rewrite the equality~\eqref{eqn:Ruelle_zero_elliptic} as
  \begin{align*}
    |R_{\rho_{2N}}(0)|^{-1}
    &= \exp \left[ \mu(\mathcal{D})  \left( -\frac{N}{\pi} \log 2 \right) \right] \\
    &\qquad \cdot
    \exp \left[ 
      \sum_{j=1}^m \left(
      \log \prod_{k=1}^N
      (1 - e^{(2k-1) \pi \iu / \alpha_j}) (1 - e^{-(2k-1) \pi \iu /  \alpha_j })
      - \frac{2N}{\alpha_j}\log2
      \right)
      \right] \\
    &=
    \exp \left[ 2N \chi^{\mathrm{orb}}(\Sigma) \log 2 \right] \\
    &\qquad \cdot
    \exp \left[ 
      \sum_{j=1}^m \left(
      \log \prod_{k=1}^N
      \left( 2 \sin \frac{\pi(2k -1)}{2\alpha_j}\right)^2
      - \frac{2N}{\alpha_j}\log2
      \right)
      \right], \\
    \intertext{replacing $\chi^{\mathrm{orb}}(\Sigma)$ with $2-2g - \sum_{j=1}^m (\alpha_j - 1) / \alpha_j$,}
    &= \exp \left[  2N \left(2-2g - \sum_{j=1}^m \frac{\alpha_j - 1}{\alpha_j} \right) \log 2 \right]\\
    &\qquad \cdot
    \exp \left[ 
      \sum_{j=1}^m \left(
      \log \prod_{k=1}^N
      \left( 2 \sin \frac{\pi(2k -1)}{2\alpha_j}\right)^2
      - \frac{2N}{\alpha_j}\log2
      \right)
      \right] \\
    &= \exp \left[  2N (2-2g - m)\log 2 \right]
    \exp \left[ 
      \sum_{j=1}^m \left(
      \log \prod_{k=1}^N
      \left( 2 \sin \frac{\pi(2k -1)}{2\alpha_j}\right)^2
      \right)
      \right],\\
    \intertext{comparing with \eqref{eqn:RtorsionExplicit_value},}
    &= \Tor{M}{\rho_{2N}}^{-1}.
  \end{align*}
\end{proof}

The Reidemeister torsion $\Tor{M}{\rho_{2N}}$ in Proposition~\ref{prop:Rtorsion_explicit}
is derived from the following equality:
\begin{equation}
  \Tor{M}{\rho_{2N}}
  = \det(\I - \rho_{2N}(h))^{-(2-2g-m)} \prod_{j=1}^m \det (\I - \rho_{2N}(\ell_j))^{-1}.
  \label{eqn:RtorsionExplicit_fibers}
\end{equation}
Note that $\det(\I - \rho_{2N}(h)) = 2^{2N}$.
The contribution of the identity element to $|R_{\rho_{2N}}(0)|^{-1}$ is also expressed as
\begin{align}
  \exp \left[ \mu(\mathcal{D})  \left( -\frac{N}{\pi} \log 2 \right) \right]
  &= \exp \left[ 2N \chi^{\mathrm{orb}}(\Sigma) \log 2  \right] \label{eqn:contribution_identity}\\
  &= \det(\I - \rho_{2N}(h))^{2- 2g - \sum_{j=1}^m (\alpha_j -1)/\alpha_j}. \notag
\end{align}
The contribution of elliptic elements to $|R_{\rho_{2N}}(0)|^{-1}$ is expressed as
\begin{align}
 & \exp \left[ 
    \sum_{j=1}^m \left(
    \log \prod_{k=1}^N
    (e^{(2k-1)\pi \iu / \alpha_j} - 1) (e^{-(2k-1) \pi \iu /  \alpha_j } - 1)
    - \frac{2N}{\alpha_j}\log2
    \right)
    \right] \label{eqn:contribution_elliptic} \\
  &= \prod_{j=1}^m \det (\I -\rho_{2N}(\ell_j)) \det (\I - \rho_{2N}(h)) ^{-1/\alpha_j}.
  \notag
\end{align}

One can find the common factors $\prod_{j=1}^m \det (\I - \rho_{2N}(h)) ^{-1/\alpha_j}$
in the both of the contributions. The common factors cancel out in the product and
the remain factors give the Reidemeister torsion as in~\eqref{eqn:RtorsionExplicit_fibers}.
We will see the contribution of the identity element only affects the leading coefficient
in the asymptotic behavior of $\log |\Tor{M}{\rho_{2N}}|$ as $N \to \infty$ in the next Subsection.

\begin{remark}
Our manifold $M = \Gamma \backslash \PSLR$ is regarded as
$\tilde \Gamma \backslash \TPSLR$ and $\tilde \Gamma = p^{-1}(\Gamma) \subset \TPSLR$ as 
a subgroup of $\mathrm{Isom}\,\TPSLR$.
Hence $M$ is geometric and
the representation $\rho_{2N} = \sigma_{2N} \circ \rho$ is obtained
from the geometric description of $M$.
However not all discrete subgroups $G \subset \mathrm{Isom}\,\TPSLR$
with Seifert fibered spaces $G \backslash \TPSLR \to \Gamma \backslash \upperH$ 
satisfy $G = p^{-1} (\Gamma)$ under the projection $p : \TPSLR \to \PSLR$.
See~\cite[Theorem~4.15]{scott83:3manifolds} for the details.
\end{remark}

\subsection{Asymptotic behavior of the Reidemeister torsion via Ruelle zeta function}
\label{subsec:relation_asymptotic}
In the previous section, we have seen that the Reidemeister torsion is expressed as
the absolute value at zero of the Ruelle zeta function.
We can also see the asymptotic behavior of the Reidemeister torsion for
$2N$-dimensional representations
from the viewpoint of the Ruelle zeta function
as $N$ goes to infinity.

First we see that the contribution~\eqref{eqn:contribution_elliptic} of elliptic elements
to the Ruelle zeta function
vanishes in the asymptotic behavior of $\log|R_{\rho_{2N}}(0)|$ when $N$ goes to infinity.
\begin{lemma}
  \label{lemma:elliptic_term_vanishes}
  The contribution of elliptic elements to $(\log|R_{\rho_{2N}}(0)|) / (2N)$ converges to zero
  as $N \to \infty$, that is,
  \[ \frac{1}{2N}
  \left(-\log \hbox{\rm of}~\eqref{eqn:contribution_elliptic}\right)
  \xrightarrow{N \to \infty} 0. \]
\end{lemma}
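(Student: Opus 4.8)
The plan is to estimate the growth rate of the expression inside~\eqref{eqn:contribution_elliptic} directly. Recall from the discussion preceding this lemma that the quantity $-\log$ of~\eqref{eqn:contribution_elliptic} equals
\[
\sum_{j=1}^m \left(\frac{2N}{\alpha_j}\log 2 - \log \prod_{k=1}^N \left(2\sin\frac{\pi(2k-1)}{2\alpha_j}\right)^2\right)
= \sum_{j=1}^m \left(\frac{2N}{\alpha_j}\log 2 - 2\sum_{k=1}^N \log\left(2\sin\frac{\pi(2k-1)}{2\alpha_j}\right)\right),
\]
using $|1-e^{\pm i\phi}|^2 = (2\sin(\phi/2))^2$. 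Since the number $m$ of cone points is fixed and independent of $N$, it suffices to show that for each fixed $j$ the single term
\[
\frac{1}{2N}\left(\frac{2N}{\alpha_j}\log 2 - 2\sum_{k=1}^N \log\left(2\sin\frac{\pi(2k-1)}{2\alpha_j}\right)\right)
\longrightarrow 0 \qquad (N\to\infty).
\]

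\textbf{Key steps.} First I would fix $j$ and write $\alpha = \alpha_j$, and split the sum $\sum_{k=1}^N \log(2\sin\frac{\pi(2k-1)}{2\alpha})$ according to the residue of $2k-1$ modulo $2\alpha$; equivalently, group the $N$ terms into roughly $N/\alpha$ blocks of $\alpha$ consecutive values of $k$. Within one such block the arguments $\frac{\pi(2k-1)}{2\alpha}$ run (mod $\pi$) over the $\alpha$ points $\frac{\pi}{2\alpha},\frac{3\pi}{2\alpha},\ldots,\frac{(2\alpha-1)\pi}{2\alpha}$, and the classical product formula
\[
\prod_{k=1}^{\alpha}\left(2\sin\frac{\pi(2k-1)}{2\alpha}\right) = 2
\]
(equivalently $\prod_{k=1}^{\alpha}(1-e^{(2k-1)\pi i/\alpha}) = 2$, the factorization of $x^\alpha+1$ at $x=1$) shows each complete block contributes exactly $\log 2$ to the sum. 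Hence $\sum_{k=1}^N \log(2\sin\frac{\pi(2k-1)}{2\alpha}) = \lfloor N/\alpha\rfloor\log 2 + (\text{partial block remainder})$, and the partial-block remainder is a sum of at most $\alpha-1$ terms each bounded in absolute value by a constant depending only on $\alpha$. Therefore the whole expression $\frac{2N}{\alpha}\log 2 - 2\sum_{k=1}^N(\cdots)$ equals $2(\frac{N}{\alpha} - \lfloor N/\alpha\rfloor)\log 2$ minus twice the remainder, which is $O(1)$ with a constant independent of $N$. Dividing by $2N$ and summing over the finitely many $j$ then gives the claimed limit $0$.

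\textbf{Main obstacle.} The only genuinely non-routine ingredient is recognizing and invoking the exact product identity $\prod_{k=1}^{\alpha}\bigl(2\sin\frac{\pi(2k-1)}{2\alpha}\bigr)=2$, which makes each full block of $\alpha$ consecutive factors collapse to the constant $2$; this is precisely the identity implicit in Proposition~\ref{prop:contribution_elliptic} and in the cancellation $\prod_{w^{\alpha}=-1}(1-w)=2$ used in the proof of Lemma~\ref{lemma:int_1st}. Once that is in hand, the rest is elementary bookkeeping: controlling the single incomplete block (at most $\alpha-1$ bounded terms) and the fractional part $N/\alpha - \lfloor N/\alpha\rfloor \in [0,1)$, both of which contribute $O(1)$ and hence vanish after division by $2N$. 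I do not expect any analytic subtlety here, since all quantities are explicit finite sums; the proof is essentially a counting argument organized around that trigonometric product formula.
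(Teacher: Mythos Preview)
Your argument is correct and complete, modulo the cosmetic point that for $k>\alpha_j$ the sine can be negative, so strictly one should write $2\log\lvert 2\sin\frac{\pi(2k-1)}{2\alpha_j}\rvert$ when extracting the factor of $2$ from the square; the periodicity argument you give already uses $\lvert\sin\rvert$ implicitly, so nothing in the logic is affected.

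The route, however, is genuinely different from the paper's. The paper does not compute the sum $\sum_{k=1}^N\log\lvert 2\sin\frac{\pi(2k-1)}{2\alpha_j}\rvert$ at all: it recognizes $\prod_{k=1}^N(1-e^{(2k-1)\pi i/\alpha_j})(1-e^{-(2k-1)\pi i/\alpha_j})$ as $\det(\I-\rho_{2N}(\ell_j))$ and then invokes an external result (\cite[Proposition~3.8]{Yamaguchi:asymptoticsRtorsion}) asserting that $\frac{1}{2N}\log\det(\I-\rho_{2N}(\ell_j))^{-1}\to -\frac{\log 2}{\alpha_j}$, which cancels exactly against the $\frac{1}{\alpha_j}\log 2$ term. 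Your blocking argument based on the identity $\prod_{w^{\alpha}=-1}(1-w)=2$ is precisely a self-contained proof of that cited proposition in this special case. What you gain is that the lemma becomes independent of the earlier paper and the mechanism of the cancellation (each full period of $\alpha_j$ factors contributes exactly $\log 2$) is made transparent; what the paper's version gains is brevity and a conceptual link to the Reidemeister torsion of the individual exceptional fiber.
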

\begin{proof}
  The contribution of elliptic elements to $\log |R_{\rho_{2N}}(0)|$
  is also given by 
  \begin{align*}
    &-\log \left( \prod_{j=1}^m \det (\I -\rho_{2N}(\ell_j)) \det (\I - \rho_{2N}(h)) ^{-1/\alpha_j} \right)\\
    &= -\sum_{j=1}^m \log \det (\I -\rho_{2N}(\ell_j))
    + \sum_{j=1}^m \frac{1}{\alpha_j} \log \det (\I - \rho_{2N}(h)) \\
    &= -\sum_{j=1}^m \log \det (\I -\rho_{2N}(\ell_j))
    + \sum_{j=1}^m \frac{2N}{\alpha_j} \log 2
  \end{align*}
  
  By Lemma~\ref{lemma:rho_ell},
  our $\SL$-representation $\rho$ sends each exceptional fiber $\ell_j$ to
  an $\SL$-matrix of order $2\alpha_j$.
  Since $\det (\I -\rho_{2N}(\ell_j) )^{-1}$ is regarded as
  the Reidemeister torsion for the exceptional fiber $\ell_j$ and
  the restriction of $\rho_{2N}$, 
  it follows from \cite[Proposition~3.8]{Yamaguchi:asymptoticsRtorsion}
  that
  \[
  \lim_{N \to \infty} \frac{\log \det (\I -\rho_{2N}(\ell_j))^{-1} }{2N}
  =\frac{-\log 2}{\alpha_j}.
  \]
  The contribution of elliptic elements to the limit of $(\log |R_{\rho_{2N}}(0)|)/(2N)$ is given by 
  \begin{align*}
    &\lim_{N \to \infty} \frac{1}{2N}
    \left(
    - \sum_{j=1}^m \log \det (\I -\rho_{2N}(\ell_j))
    + \sum_{j=1}^m \frac{2N}{\alpha_j} \log 2
    \right)\\
    &= 
    \sum_{j=1}^m \lim_{N \to \infty} \frac{\log \det (\I -\rho_{2N}(\ell_j))^{-1} }{2N}
    + \sum_{j=1}^m \frac{1}{\alpha_j} \log 2 \\
    &= 0.
  \end{align*}
\end{proof}

We can recover the limit of the leading coefficient in $\log |\Tor{M}{\rho_{2N}}|$ 
from the asymptotic behavior of $|R_{\rho_{2N}}(0)|$.
\begin{proposition}
  \label{prop:limit_Ruelle}
  The leading coefficient of $(\log |\Tor{M}{\rho_{2N}}|) / (2N)$
  is determined by the contribution~\eqref{eqn:contribution_identity}
  of the identity element to $R_{\rho_{2N}}(0)$ as $N \to \infty$.
  That is,
  \begin{equation}
    \label{eqn:limit_lead_coeff}
    \lim_{N \to \infty}\frac{\log |\Tor{M}{\rho_{2N}}|}{2N}
    = \frac{\mathrm{Area}(\Sigma)}{2\pi} \log 2.
  \end{equation}
  Therefore we can derive the area of the base orbifold $\Sigma$
  from the limit of the leading coefficient
  in $\log |\Tor{M}{\rho_{2N}}|$.
\end{proposition}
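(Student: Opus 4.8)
The plan is to read off the limit directly from the two main results already in hand. By Theorem~\ref{thm:Ruelle_Rtorsion_eqn} we have $\Tor{M}{\rho_{2N}} = |R_{\rho_{2N}}(0)|$, so it suffices to analyze $\log|R_{\rho_{2N}}(0)|$ as $N \to \infty$, and Theorem~\ref{thm:Ruelle_zero} gives an exact closed form for $|R_{\rho_{2N}}(0)|^{-1}$ as a product of two factors: the contribution~\eqref{eqn:contribution_identity} of the identity element and the contribution~\eqref{eqn:contribution_elliptic} of the elliptic conjugacy classes. The idea is that after normalizing by $2N$ the first factor is already in final form and the second factor disappears in the limit.

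Concretely, I would take $-\log$ of the formula in Theorem~\ref{thm:Ruelle_zero} and divide by $2N$, obtaining
\[
\frac{\log|R_{\rho_{2N}}(0)|}{2N}
= \frac{\mu(\mathcal{D})}{2\pi}\log 2
- \frac{1}{2N}\sum_{j=1}^m\left(\log\prod_{k=1}^N(1-e^{(2k-1)\theta(q_j)\iu})(1-e^{-(2k-1)\theta(q_j)\iu}) - \frac{2N}{m(q_j)}\log 2\right).
\]
The first term on the right is independent of $N$ and equals $\frac{\mu(\mathcal{D})}{2\pi}\log 2$; this is precisely the identity contribution~\eqref{eqn:contribution_identity}, rescaled. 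Then I would invoke Lemma~\ref{lemma:elliptic_term_vanishes}, which says exactly that the remaining sum — the contribution of the elliptic elements divided by $2N$ — tends to $0$ as $N \to \infty$. Passing to the limit gives $\lim_{N\to\infty}(\log|R_{\rho_{2N}}(0)|)/(2N) = \frac{\mu(\mathcal{D})}{2\pi}\log 2$, hence the same value for $(\log|\Tor{M}{\rho_{2N}}|)/(2N)$ by Theorem~\ref{thm:Ruelle_Rtorsion_eqn}. Finally I would note that $\mu(\mathcal{D})$, the measure of a fundamental domain for the hyperbolic area form $d\mu = dx\,dy/y^2$, is by definition $\mathrm{Area}(\Sigma)$ of the base orbifold, which is~\eqref{eqn:limit_lead_coeff}.

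There is essentially no remaining obstacle: the genuine analytic content has been front-loaded into Theorem~\ref{thm:Ruelle_zero} and Lemma~\ref{lemma:elliptic_term_vanishes}. The only points worth a sentence of care are that the normalization by $2N$ is the correct one — i.e. that $\log|\Tor{M}{\rho_{2N}}|$ grows exactly like $2N$, which is the growth-order statement of~\cite{Yamaguchi:asymptoticsRtorsion}, so the limit is finite and, since $\mu(\mathcal{D})>0$, nonzero — and the bookkeeping identification of the constant term as coming solely from the identity element, which is what makes the final sentence about recovering $\mathrm{Area}(\Sigma)$ legitimate. One may additionally record the equivalent form $-\chi^{\mathrm{orb}}(\Sigma)\log 2$ of the limit using Gauss--Bonnet $\mu(\mathcal{D}) = -2\pi\chi^{\mathrm{orb}}(\Sigma)$.
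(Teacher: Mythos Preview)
Your proposal is correct and follows essentially the same route as the paper: invoke Theorem~\ref{thm:Ruelle_Rtorsion_eqn} to replace $\Tor{M}{\rho_{2N}}$ by $|R_{\rho_{2N}}(0)|$, use the decomposition from Theorem~\ref{thm:Ruelle_zero} into the identity and elliptic contributions, and apply Lemma~\ref{lemma:elliptic_term_vanishes} to kill the elliptic part in the limit. The paper's proof is just a more compressed version of what you wrote; your additional remarks about the growth order and the Gauss--Bonnet reformulation are correct but not needed for the argument.
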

\begin{proof}
  It follows from
  Theorem~\ref{thm:Ruelle_Rtorsion_eqn} and
  Lemma~\ref{lemma:elliptic_term_vanishes} that 
  \begin{align*}
    \lim_{N \to \infty} \frac{\log |\Tor{M}{\rho_{2N}}|}{2N} 
    &= \lim_{N \to \infty} \frac{\log|R_{\rho_{2N}}(0)|}{2N} \\
    &= \lim_{N \to \infty} \frac{1}{2N}
    \log \exp \left[
      -\mu(\mathcal{D})\left(-\frac{N}{\pi} \log 2\right)
      \right] \\
    &= \frac{\mu(\mathcal{D})}{2\pi} \log 2.
  \end{align*}  
\end{proof}
Together with the Gauss--Bonnet theorem
$\mathrm{Area}(\Sigma) = \mu(\mathcal{D}) = -2\pi \chi^{\mathrm{orb}}$
for the hyperbolic orbifold $\Sigma$,
the equality~\eqref{eqn:limit_lead_coeff} turns out to be
\[
\lim_{N \to \infty} \frac{\log |\Tor{M}{\rho_{2N}}|}{2N}
=- \chi^{\mathrm{orb}} \log 2.
\]
It was showed in~\cite{Yamaguchi:asymptoticsRtorsion}
that there exist finitely many possibilities in the limits of the leading coefficient
in $\log|\Tor{M}{\rho_{2N}}|$ for sequences
of $\SL[2N]$-representations starting with $\SL$-representations sending $h$ to $-\I$ and
the maximum in those possibilities is given by $- \chi^{\mathrm{orb}} \log 2$.

Proposition~\ref{prop:limit_Ruelle} shows that
the limit of the leading coefficient in $\log|\Tor{M}{\rho_{2N}}|$
for the sequence of $\SL[2N]$-representations $\rho_{2N}$ induced from the geometric description
$M = \tilde \Gamma \backslash \TPSLR$ realizes
the upper bound of limits of the leading coefficients
in the logarithm of the Reidemeister torsion of $M$.

\begin{corollary}
  \label{cor:limit_leadcoeff}
  The limit of $(\log|\Tor{M}{\rho_{2N}}|)/(2N)$ attains
  the maximum $- \chi^{\mathrm{orb}} \log 2$.
\end{corollary}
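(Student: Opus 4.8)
The plan is to read off the result directly from Proposition~\ref{prop:limit_Ruelle} together with the classification of possible leading coefficients established in the earlier paper \cite{Yamaguchi:asymptoticsRtorsion}. The corollary is essentially a repackaging: Proposition~\ref{prop:limit_Ruelle} already computes the limit in question, and the only additional input is that the computed value is the largest one that can occur.

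First I would invoke Proposition~\ref{prop:limit_Ruelle}, which gives
\[
\lim_{N \to \infty}\frac{\log |\Tor{M}{\rho_{2N}}|}{2N} = \frac{\mu(\mathcal{D})}{2\pi}\log 2,
\]
and then apply the Gauss--Bonnet theorem $\mu(\mathcal{D}) = -2\pi\chi^{\mathrm{orb}}$ for the hyperbolic orbifold $\Sigma = \Gamma \backslash \upperH$ to rewrite the right-hand side as $-\chi^{\mathrm{orb}}\log 2$. This identifies the limit for our particular sequence $\rho_{2N}$.

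Next I would recall that \cite{Yamaguchi:asymptoticsRtorsion} shows there are only finitely many possible values for the limit of $(\log|\Tor{M}{\rho_{2N}}|)/(2N)$ as $\rho_{2N}$ ranges over sequences of $\SL[2N]$-representations obtained from $\SL$-representations sending the regular fiber class $h$ to $-\I$, and that the maximum among these is exactly $-\chi^{\mathrm{orb}}\log 2$. Since Proposition~\ref{prop:rho_2N_h} guarantees $\rho_{2N}(h) = -\I_{2N}$, the sequence $\rho_{2N}$ built from the geometric description $M = \tilde\Gamma \backslash \TPSLR$ lies in the class to which that bound applies, so its limit cannot exceed $-\chi^{\mathrm{orb}}\log 2$. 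Combined with the exact value computed above, this forces equality, i.e.\ the upper bound is attained.

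There is no real obstacle here: the corollary is a formal consequence of Proposition~\ref{prop:limit_Ruelle}, the Gauss--Bonnet identity, and the cited maximality statement. The only point requiring a small amount of care is checking that the hypotheses of the maximality result from \cite{Yamaguchi:asymptoticsRtorsion} (in particular the condition $\rho_{2N}(h) = -\I_{2N}$, and that $M$ is of the form $\Gamma \backslash \PSLR$ with the stated Seifert invariants) are met by our $\rho_{2N}$, which has already been verified in Proposition~\ref{prop:rho_2N_h} and the surrounding discussion.
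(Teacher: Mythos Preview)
Your proposal is correct and follows essentially the same approach as the paper: the corollary is stated without a separate proof in the paper, being drawn directly from Proposition~\ref{prop:limit_Ruelle}, the Gauss--Bonnet identity $\mu(\mathcal{D}) = -2\pi\chi^{\mathrm{orb}}$, and the maximality result cited from \cite{Yamaguchi:asymptoticsRtorsion}. Your additional remark verifying via Proposition~\ref{prop:rho_2N_h} that $\rho_{2N}$ satisfies the hypotheses of that cited result is a reasonable point of care, though the paper leaves it implicit.
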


\section*{Acknowledgments}
The author was supported by JSPS KAKENHI Grant Number $17K05240$.
\bibliographystyle{amsalpha}
\bibliography{GeodesicFlowZetaFunctionRtorsion}

\end{document}